\def\to{\rightarrow}
\newtheorem*{rep@theorem}{\rep@title}
\newcommand{\newreptheorem}[2]{
\newenvironment{rep#1}[1]{
 \def\rep@title{#2 \ref{##1}}
 \begin{rep@theorem}}
 {\end{rep@theorem}}}
\theoremstyle{plain}
\newtheorem{thm}{Theorem}[section]
\newtheorem{lem}[thm]{Lemma}
\newtheorem{conj}[thm]{Conjecture}
\newtheorem{cor}[thm]{Corollary}
\newtheorem*{LineThm}{Theorem 3.1}
\theoremstyle{definition}
\newtheorem{defn}{Definition}
\theoremstyle{remark}
\newcommand{\IN}{\mathbb{N}}
\newcommand{\chil}{\chi_{\ell}}
\newcommand{\chip}{\chi_{OL}}
\newcommand{\ch}{{\textrm{ch}}}
\newcommand{\mad}{{\textrm{mad}}}
\newcommand{\claim}[2]{{\bf Claim #1.}~{\it #2}~~}
\newcommand{\subclaim}[2]{{\bf Subclaim #1.}~{\it #2}~~}
\newcommand{\set}[1]{\left\{ #1 \right\}}
\newcommand{\setbs}[2]{\left\{ #1 \mid #2 \right\}}
\newcommand{\card}[1]{\left|#1\right|}
\newcommand{\ceil}[1]{\left\lceil#1\right\rceil}
\newcommand{\irange}[1]{\left[#1\right]}
\newcommand{\join}[2]{#1 \mbox{\hspace{2 pt}$\vee$\hspace{2 pt}} #2}
\newcommand{\parens}[1]{\left( #1 \right)}
\newcommand{\DefinedAs}{\mathrel{\mathop:}=}
\def\nats{\mathbb{Z}^+}
\def\adj{\leftrightarrow}
\def\nonadj{\not\!\leftrightarrow}
\def\ad{\textrm{ad}}
\def\mad{\textrm{mad}}
\begin{document}
\title{List-coloring claw-free graphs with $\Delta$-1 colors}
\author{Daniel W. Cranston\thanks{Department of Mathematics and Applied
Mathematics, Viriginia Commonwealth University, Richmond, VA;
\texttt{dcranston@vcu.edu}; 
Research of the first author is partially supported by NSA Grant
98230-15-1-0013.}
\and
Landon Rabern\thanks{LBD Data Solutions, Lancaster, PA;
\texttt{landon.rabern@gmail.com}}
}
\maketitle

\begin{abstract}
We prove that if $G$ is a quasi-line graph with $\Delta(G)>\omega(G)$ and
$\Delta(G)\ge 69$, then $\chip(G)\le \Delta(G)-1$.  Together with our previous
work, this implies that if $G$ is a claw-free graph with $\Delta(G)>\omega(G)$
and $\Delta(G)\ge 69$, then $\chil(G)\le \Delta(G)-1$.
\end{abstract}

\section{Introduction}
Brooks' Theorem shows that to color a graph $G$ with $\Delta(G)$ colors, the
obvious necessary condition (no clique of size $\Delta(G)+1$) is also
sufficient, when $\Delta(G)\ge 3$.  Borodin and Kostochka~\cite{BK77}
conjectured something similar for $\Delta(G)-1$ colors.

\begin{conj}[Borodin--Kostochka~\cite{BK77}]
\label{BK-conj}
If $G$ is a graph with $\Delta(G)\ge 9$ and no clique of size $\Delta(G)$,
then $\chi(G)\le \Delta(G)-1$.
\end{conj}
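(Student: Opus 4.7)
My proposal is to argue by contradiction from a vertex-minimal counterexample. Suppose $G$ is vertex-minimal with $\Delta(G)\ge 9$, no $K_{\Delta(G)}$, and $\chi(G)\ge\Delta(G)$. Then $G$ is vertex-critical at chromatic number $\Delta$, so a standard degeneracy argument gives $\delta(G)\ge\Delta-1$; together with Brooks' theorem and the no-$K_\Delta$ hypothesis this forces $\omega(G)\le\Delta-1$ and every vertex to have degree in $\{\Delta-1,\Delta\}$.

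The next step is to isolate the structural obstruction. Call a vertex $v$ \emph{hard} if $N(v)$ contains a clique of size $\Delta-1$; these are the vertices tightest against any greedy $(\Delta-1)$-coloring. The Gallai--Kostochka--Stiebitz theory of the low-vertex subgraph of a critical graph, combined with Rabern-type lemmas on near-clique neighborhoods, should pin the neighborhood of each hard vertex to within a sparse deletion of $K_\Delta$ and should show that hard vertices are relatively few. Non-hard vertices, by contrast, admit an ``obstruction-free'' neighborhood on which list-coloring tools behave well.

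The main technical engine would be a Mozhan partition of $V(G)$ into $\Delta-1$ parts $V_1,\ldots,V_{\Delta-1}$ chosen to minimize the number of edges inside parts. Minimality forces each $G[V_i]$ to have maximum degree at most $1$, and Kempe-type swaps between pairs of parts yield long alternating paths encoding a proper $(\Delta-1)$-coloring of a large subgraph. Around a non-hard vertex a single swap should enable a local recoloring; around a hard vertex I would combine a Kempe chain with a kernel or list-coloring lemma (in the spirit of Alon--Tarsi or kernel-perfect orientations) to extract a contradictory color extension.

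The principal obstacle, and the reason the conjecture remains open in its full generality, is the interaction between clustered hard vertices: a local recoloring around one hard vertex typically destroys the delicate constraints at a nearby hard vertex. Any workable proof must assemble the local reductions into a global discharging scheme on the subgraph induced by hard vertices. In the claw-free / quasi-line setting of the present paper, the neighborhood of a hard vertex splits into two cliques, which makes the discharging tractable; without that geometric constraint I would expect this assembly step, rather than the local recoloring, to be the true bottleneck.
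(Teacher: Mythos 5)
You are not actually being asked to prove a theorem here: the statement labeled Conjecture~\ref{BK-conj} is the Borodin--Kostochka Conjecture itself, which this paper (and the literature at large) leaves open. The paper never proves it; it establishes only the restricted Theorem~\ref{MainThm} (quasi-line graphs, $\Delta\ge 69$, online list-coloring) and the corollary for claw-free graphs, and it cites Reed's $\Delta\ge 10^{14}$ result and the $\omega\le\Delta-3$ bound as the best partial progress on the general conjecture. So there is no ``paper's own proof'' of this statement against which to check your proposal.

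That said, your sketch is a fair description of the known attack and its known failure mode, and you deserve credit for being explicit about where it stalls. The opening reductions (vertex-criticality, $\delta\ge\Delta-1$, $\omega\le\Delta-1$, degrees in $\{\Delta-1,\Delta\}$) are standard and correct. Mozhan partitions and the Gallai/Kostochka/Stiebitz/Rabern machinery on low-vertex subgraphs are indeed the tools behind the existing partial results. But the step you describe as ``assemble the local reductions into a global discharging scheme on the subgraph induced by hard vertices'' is a placeholder for the open core of the problem, not a step anyone knows how to execute; there is no known way to control clustered hard vertices in a general graph. You correctly observe that the quasi-line hypothesis --- each neighborhood covered by two cliques --- is what makes the present paper's argument go: its actual route is through the Chudnovsky--Seymour structure theorem (Theorem~\ref{QuasilineStructure}), a reduction via circular interval graphs, non-linear homogeneous pairs, and interval 2-joins down to line graphs, and then Alon--Tarsi and Galvin/Borodin--Kostochka--Woodall kernel arguments on line graphs of bipartite graphs. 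None of that applies to an arbitrary $G$. In short, your proposal is an honest program sketch, not a proof; the gap is exactly the one you name, and it is the reason the conjecture remains open.
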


This conjecture is a major open problem and has been the subject of much research.
Reed~\cite{Reed99} used probabilistic techniques to prove the conjecture when
$\Delta(G)\ge 10^{14}$.  For graphs with smaller maximum degree, the best result
\cite{big-cliques} is that $\chi(G)\le\Delta(G)-1$ whenever $G$ has no clique of
size $\Delta(G)-3$.  We have also proved Conjecture~\ref{BK-conj} for claw-free
graphs~\cite{BK-claw-free}.

Although the Borodin--Kostochka conjecture is far from resolved, it is natural
to pose the analogous
conjectures for list-coloring and online list-coloring, 
replacing $\chi(G)$ in Conjecture~\ref{BK-conj} with $\chil(G)$ and $\chip(G)$.
These conjectures first appeared in print in~\cite{BK-claw-free}
and~\cite{BK-squares}, respectively.  In the case of Brooks' Theorem, the
analogues for $\chil(G)$ and $\chip(G)$ both hold.  In fact, the proof of the
latter~\cite{Brooks-AT} constructs an orientation of $E(G)$ from which the
result follows by the Alon--Tarsi Theorem.
The present paper applies this approach to the Borodin--Kostochka conjecture.
More precisely, given a graph with $\Delta(G)\ge 9$ and $\omega(G)<\Delta(G)$,
we seek an orientation of $E(G)$ that implies that $\chip(G)\le \Delta(G)-1$.
Our main result is the following.

\begin{thm}
\label{MainThm}
Let $G$ be a quasi-line graph with $\Delta(G)\ge 69$.  If $\omega(G) <
\Delta(G)$, then $\chip(G)\le \Delta(G)-1$.  Further, Painter has a natural
winning strategy, using a combination of the Alon--Tarsi Theorem and the
kernel method.
\end{thm}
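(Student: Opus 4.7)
The plan is to take a minimum counterexample $G$ and derive a contradiction by constructing an orientation of $E(G)$ that lets Painter win online with lists of size $\Delta(G)-1$. By the minimality of $G$, every proper induced subgraph $H$ satisfies $\chip(H)\le \Delta(G)-1$; this gives the usual online-critical lower bounds, in particular $\delta(G)\ge \Delta(G)-1$, and rules out the standard reducible configurations (a vertex with most neighbors of low degree, a clique cut, etc.). Combined with the hypothesis $\omega(G)<\Delta(G)$, the quasi-line property forces every vertex $v$ of degree $\Delta(G)$ to have $N(v)=A_v\cup B_v$ for cliques $A_v,B_v$ with $|A_v|,|B_v|\le \Delta(G)-2$ and $|A_v|+|B_v|\ge \Delta(G)$, since otherwise $\{v\}\cup A_v$ or $\{v\}\cup B_v$ would be a clique of size $\ge \Delta(G)$.

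Next I would invoke the Chudnovsky--Seymour structure theorem for quasi-line graphs to obtain a clique cover of $E(G)$ by cliques $K_1,\dots,K_t$, each of size at most $\Delta(G)-1$, arising either from a linear interval representation or from homogeneous pairs of cliques. On this cover, the goal is to orient each $K_i$ acyclically (so that it has a unique sink, which serves as a kernel) and then to direct the remaining edges -- those lying in clique-overlaps, or between distinct strips -- in a way that does not create an odd directed cycle across different $K_i$. By Richardson's theorem the resulting digraph $D$ is then kernel-perfect, and the kernel method, equivalently Schauz's online Alon--Tarsi criterion, gives $\chip(G)\le \max_v d^+_D(v)+1$.

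The orientation must also satisfy $d^+_D(v)\le \Delta(G)-2$ at every $v$. For $v$ of degree $\Delta(G)$ this amounts to choosing the acyclic orderings on $A_v\cup\{v\}$ and $B_v\cup\{v\}$ so that $v$ is not near the top in both; since $|A_v|,|B_v|\le \Delta(G)-2$ each ordering has ``slack'', and I would phrase the simultaneous choice as a flow or Hall-type feasibility problem on an auxiliary bipartite structure built from the clique cover. The numerical hypothesis $\Delta(G)\ge 69$ enters when these local choices must be made compatible across the entire cover, giving enough slack to absorb incompatibilities where cliques overlap.

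The main obstacle I foresee is handling ``tight'' configurations: a vertex $v$ sitting in two cliques both of size close to $\Delta(G)-2$, where the out-degree budget at $v$ is essentially exhausted and the kernel-perfect orientation becomes too rigid. Here one must either prove structurally that such configurations cannot occur in a critical quasi-line $G$ with $\omega<\Delta$ -- via a refined analysis of Chudnovsky--Seymour strips and homogeneous pairs -- or else replace the orientation locally by a direct Alon--Tarsi calculation, counting even versus odd Eulerian subdigraphs on a small explicit subgraph where the counts are tractable. Marrying a kernel-method orientation on the bulk of $G$ with a targeted Alon--Tarsi calculation on these tight pockets is precisely the hybrid strategy that the theorem advertises, and is where the constant $69$ should be pinned down.
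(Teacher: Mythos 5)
Your outline diverges substantially from the paper's proof, and as it stands it has gaps that the paper's machinery is specifically designed to overcome.

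The paper does not attempt a single global orientation of a clique cover of $E(G)$. Instead it reduces, via the Chudnovsky--Seymour structure theorem (Theorem~\ref{QuasilineStructure}), to three cases: circular interval graphs and non-linear homogeneous pairs of cliques are eliminated from a minimal counterexample by exhibiting \emph{small explicit induced subgraphs} that are $f$-AT (Lemmas~\ref{NotCircularIntervalIfBKCritical} and~\ref{NoNonLinear}, using the orientations in Figures~\ref{ATpics1} and~\ref{ATpics2}); and compositions of linear interval strips are shown (Lemmas~\ref{Irreducible2Join}--\ref{QuasiLineContainedInLine}) to be contained in line graphs $G'$ with the same $\Delta$ and still $\omega<\Delta$. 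The entire quantitative weight, and the constant $69$, then sits in the line-graph case (Theorem~\ref{mainLineGraphs}): one shows the underlying multigraph $H$ is $6$-degenerate, hence $\mad(H)<12$, and runs a discharging argument on $H$ in which the key reducibility lemma is the Borodin--Kostochka--Woodall theorem (Theorem~\ref{BKWmain}) that line graphs of bipartite graphs are $f$-KP. Your proposed route---orient each clique of a cover acyclically and control odd directed cycles globally via a Hall-type feasibility argument---has no analogue here, and I do not see how to make it work: acyclic orientations of overlapping cliques readily produce odd directed cycles (this is already the crux of Galvin's theorem, which needs the host graph to be bipartite), and the paper explicitly notes that the pure Alon--Tarsi route fails even for $L(K_{3,3})$. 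Your picture of ``KP on the bulk, AT on tight pockets'' is also inverted relative to the paper: the AT orientations appear only on small configurations ruling out the non-line-graph cases and the high-multiplicity edges (Lemmas~\ref{mu-at-most-3-AT},~\ref{mu-3-KP}), while the heavy lifting on the generic part of a line graph is entirely kernel-method via Theorem~\ref{BKWmain}. Finally, your suggestion that $\Delta\ge 69$ supplies ``slack'' for a compatibility/flow argument does not match where the constant actually comes from: it is pinned down by the discharging computation on a graph with $\mad<12$ in Lemma~\ref{helperLineGraphs}, which yields $\Delta(G)\le 68$ for a BK-free line graph. So while your proposal correctly identifies the two coloring tools in play, it misses the reduction to line graphs, the bipartite-subgraph--plus--discharging core, and the actual source of the numerical bound.
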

\bigskip

Chudnovsky and Seymour~\cite{CS-survey,CS-claw-free4} proved a structure theorem
for claw-free graphs.  Although it is rather complicated, it implies the
following structure theorem for quasi-line graphs, which is much simpler.
(We define the undefined terms in Section~\ref{defns}.)

\begin{thm}[\cite{CS-survey,CS-claw-free4}]
\label{QuasilineStructure}
Every connected quasi-line graph not containing a non-linear homogeneous pair
of cliques is a circular interval graph or a composition of linear interval strips.
\end{thm}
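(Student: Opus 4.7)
The plan is to deduce this specialization directly from the general Chudnovsky--Seymour structure theorem for claw-free graphs. Since every quasi-line graph is claw-free, that theorem applies to $G$ and expresses it as one of a short list of ``basic'' claw-free graphs (circular interval graphs, fuzzy line graphs of multigraphs, subgraphs of the icosahedron, antiprismatic graphs, XX-graphs, and so on), possibly built up through a small number of composition operations (strip compositions, hex expansions, and homogeneous-pair modifications). The proof strategy is then to eliminate, one by one, every piece of this classification that cannot occur in a quasi-line graph and to identify the remaining pieces with the target classes.

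First I would handle the basic classes. Most of the exotic basic claw-free graphs (icosahedral pieces, antiprismatic graphs, etc.) contain a vertex whose neighborhood cannot be covered by two cliques, and hence are not quasi-line; a short case check eliminates all but circular interval graphs and fuzzy linear interval strips. Second, I would examine the gluing operations. The strip-composition operation, when restricted to strips whose interiors are quasi-line, yields a composition of linear interval strips (after verifying that the strip boundary cliques glue together consistently, and that the interior of each strip is in fact linear-interval rather than circular). Third, I would deal with homogeneous pairs of cliques: Chudnovsky--Seymour allow a general homogeneous-pair expansion as a building operation, and the hypothesis of the theorem is precisely what is needed to forbid those expansions that cannot be absorbed into the previous two structural cases. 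Linear homogeneous pairs, by contrast, can be rewritten as pieces of a strip composition or folded into a circular interval structure.

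The final step would be to check that after all these reductions, a connected quasi-line graph either remains ``monolithic'' (no nontrivial strip decomposition exists, and then it must be a circular interval graph) or admits a nontrivial strip decomposition in which every strip is linear-interval, giving the second alternative.

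The main obstacle will be the third step: homogeneous pairs of cliques are the most delicate construction in the Chudnovsky--Seymour decomposition, and separating the ``linear'' ones (which can be absorbed) from the ``non-linear'' ones (which genuinely sit outside the two target classes) requires a careful analysis of how the two cliques of the pair attach to the surrounding structure. I would expect to spend most of the proof on establishing that the linear case really does coincide with a piece of a linear interval strip composition, rather than giving rise to a new irreducible structure.
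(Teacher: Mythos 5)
This theorem is not proved in the paper at all; it is quoted verbatim with a citation to Chudnovsky and Seymour, and the paper's contribution begins only after taking it as a black box. So there is no internal proof to compare your sketch against. Judged on its own, your outline correctly identifies the high-level shape of the argument in the literature, but it is a plan rather than a proof: the real content of Chudnovsky--Seymour's result is precisely the ``short case check'' and ``careful analysis'' you defer to, and those steps occupy the bulk of two long papers. Two specific cautions. First, you propose to derive the quasi-line statement by pruning the full claw-free decomposition; in fact Chudnovsky and Seymour give a comparatively self-contained treatment of quasi-line graphs (it is the natural base case, not a corollary of the harder claw-free theorem), so the claw-free-first route is more work than it needs to be and risks circularity, since the claw-free structure theorem is itself built partly on an understanding of quasi-line pieces. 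Second, your claim that ``linear homogeneous pairs can be rewritten as pieces of a strip composition or folded into a circular interval structure'' is the entire nontrivial content of the theorem in the homogeneous-pair direction, not an observation one can make in passing; stating it without argument leaves the central gap exactly where the difficulty lives, as you yourself acknowledge in the final paragraph. If the goal were to actually prove the theorem you would need to carry out that absorption argument, which is what \cite{CS-survey,CS-claw-free4} do; for the purposes of this paper it is entirely appropriate to cite it, as the authors do.
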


Theorem~\ref{QuasilineStructure} suggests a natural approach to prove
Theorem~\ref{MainThm}.
Let $G$ be a quasi-line graph with $\Delta(G)\ge 69$ and $\omega(G)<\Delta(G)$.
In Sections~\ref{circular-interval-graphs} and~\ref{homogeneous-pairs} we show
that if $G$ is a minimal counterexample to our theorem, then $G$ cannot be a
circular interval graph and $G$ cannot contain a non-linear homogeneous pair of
cliques.  In Section~\ref{2joins}, we consider compositions of linear
interval strips (which include line graphs, as a special case).  
We show that
a minimal counterexample $G$ must be formed from a line graph $G'$ by deleting
some (possibly empty) matching $M$.  Further, we can choose $G'$ such that
$\Delta(G')=\Delta(G)$ and $\omega(G')<\Delta(G)$.  So we prove the desired
result for all quasi-line graphs if we prove it for line graphs.  Finally, in
Section~\ref{line-graphs} we prove the theorem for line graphs.  
By combining this result with Theorem~5.6 from~\cite{BK-claw-free}, we get that
every claw-free graph $G$ with $\Delta(G)\ge 69$ and $\omega(G)<\Delta(G)$
satisfies $\chil(G)\le \Delta(G)-1$.
In other words, for these graphs we prove the
list-coloring version of the Borodin--Kostochka conjecture.

It is somewhat surprising that line graphs seem to be the most difficult case
in the proof.  In our reduction from general quasi-line graphs to line graphs,
we only need $\Delta(G)\ge 9$.  It is our proof of Theorem~\ref{MainThm} for
line graphs that requires $\Delta(G)\ge 69$.
As noted above, Theorem~5.6\ in~\cite{BK-claw-free} shows that if
$\chil(G)\le\Delta(G)-1$ for all quasi-line graphs $G$ with
$\omega(G)<\Delta(G)$ and $\Delta(G)\ge 9$, then the same bound holds for all
such claw-free graphs.
In unpublished work, we have extended this reduction to \emph{online} list-coloring.  
Thus, $\chip(G)\le\Delta(G)-1$ for every such claw-free graph $G$
with $\Delta(G)\ge 69$.  (And again, the hypothesis $\Delta(G)\ge 69$ is needed
only for the case of line graphs.)

\subsection{Definitions}
\label{defns}
Most of our terminology and notation are standard.
We write $N(v)$ for the neighborhood of a vertex $v$, and $N[v]=N(v)\cup\{v\}$.
When $u$ and $v$ are adjacent, we write $u\adj v$; otherwise, $u\nonadj v$.
We write $[t]$ for $\{1,\ldots,t\}$ (but we reserve, for example, $[13]$ for
citations).  The degree, $d(v)$, is the size of $N(v)$ and $d_H(v)$ is the size
of $N(v)\cap V(H)$, for any subgraph $H$.  A graph is \emph{$k$-degenerate} if
every subgraph $H$ contains a vertex $v$ with $d_H(v)\le k$.
The complement of $G$ is denoted
$\overline{G}$.  The maximum degree and clique number of $G$
are denoted $\Delta(G)$ and $\omega(G)$, and we may write $\Delta$ and $\omega$
when the context is clear.  
The chromatic number of $G$ is $\chi(G)$.  Similarly, the list chromatic 
and online list-chromatic numbers are $\chil(G)$ and $\chip(G)$.
The edge chromatic number of $G$ is $\chi'(G)$, and $\chil'(G)$ and $\chip'(G)$
are defined analogously.  A graph $G$ is \emph{$L$-colorable} if $G$ has a proper
coloring $\varphi$ such that $\varphi(v)\in L(v)$ for all $v\in V(G)$.
A graph $G$ is \emph{$f$-choosable} if $G$ is $L$-colorable whenever $|L(v)|\ge
f(v)$ for all $v$, and $f$-paintable is defined analogously.  We write $d_1$
for the function $f(v)=d(v)-1$, and thus define \emph{$d_1$-choosable} and
\emph{$d_1$-paintable}.

The subgraph of a graph $G$ induced by vertex set $S$ is $G[S]$.  
The \emph{average degree}, $\ad(G)$, of a graph $G$ is $2|E(G)|/|V(G)|$.  The
\emph{maximum average degree}, $\mad(G)$, is the maximum of $\ad(H)$, taken over
all subgraphs $H$ of $G$.
A graph or subgraph is \emph{complete} if it
induces a clique; otherwise it is \emph{incomplete}.  A graph is \emph{almost
complete} if deleting some vertex yields a complete graph.  The \emph{join}
of graphs $G$ and $H$, denoted $\join{G}{H}$, is formed from their disjoint union
by adding every edge with one endpoint in each of $G$ and $H$.

A \emph{linear interval graph} is one in which the vertices can be 
placed on the real line so that for each vertex $v$ its neighborhood is
precisely the vertices in some interval of the line containing $v$.  A
\emph{circular interval graph} is defined analogously, except that now the
vertices are placed on the unit circle.  The \emph{line graph} $G$ of some graph
$H$ has $V(G)=E(H)$ and $uv\in E(G)$ whenever $u,v\in V(G)$ and they correspond
to edges in $H$ sharing an endpoint.  A graph $G$ is \emph{quasi-line} if for
each vertex $v\in V(G)$, the subgraph $G[N(v)]$ can be covered by two cliques.
A graph if \emph{claw-free} if it contains no induced copy of $K_{1,3}$.  It is
easy to check that the class of claw-free graphs properly contains the class of
quasi-line graphs, which in turn properly contains the class of line graphs.

A \emph{homogeneous pair of cliques} $(A_1, A_2)$ in a graph $G$ is a pair of
disjoint nonempty cliques such that for each $i \in \{1,2\}$, every vertex in
$G \setminus (A_1 \cup A_2)$ is either adjacent to all of $A_i$ or non-adjacent
to all of $A_i$ and $\card{A_1} + \card{A_2} \geq 3$. The pair $(A_1, A_2)$ is
\emph{non-linear} if $G[A_1\cup A_2]$ contains an induced 4-cycle.

Chudnovsky and Seymour \cite{CS-survey} generalized the class of line graphs by
introducing the notion of \emph{compositions of strips} (\cite[Chapter~5]{king2009claw} gives a more detailed introduction).
We use the modified definition from King and
Reed \cite{king2008bounding}. A \emph{strip} $(H, A_1, A_2)$ is a claw-free
graph $H$ containing two cliques $A_1$ and $A_2$ such that for each $i \in
\{1,2\}$ and $v \in A_i$, the set $N_H(v)\setminus A_i$ is a clique.  
If $H$ is a linear interval graph with $A_1$ and $A_2$ on opposite ends, then
$(H, A_1, A_2)$ is a $\emph{linear interval strip}$.  Now let $H$ be a directed
multigraph (possibly with loops) and suppose for each edge $e$ of $H$ we have a
strip $(H_e, X_e, Y_e)$.  For each $v \in V(H)$ define

\[
C_v \DefinedAs \parens{\bigcup \setbs{X_e}{\text{$e$ is directed out of $v$}}}
\cup \parens{\bigcup \setbs{Y_e}{\text{$e$ is directed into $v$}}}.
\]

The graph formed by taking the disjoint union of $\setbs{H_e}{e \in E(H)}$ and
making $C_v$ a clique for each $v \in V(H)$ is the composition of the strips
$(H_e, X_e, Y_e)$.  Any graph formed in this way is a \emph{composition of
strips}.  Notice that if each strip $(H_e, X_e, Y_e)$ in the composition has
$V(H_e) = X_e = Y_e$, then the graph formed is just the line graph of the
multigraph formed by replacing each $e \in E(H)$ with $\card{H_e}$ copies of
$e$.

It is convenient to have notation and terminology for a strip together with
how it attaches to the graph. An \emph{interval $2$-join} in a graph $G$ is an
induced subgraph $H$ such that (i) $H$ is a nonempty linear interval graph,
(ii) the ends of $H$ are cliques $A_1$ and $A_2$, not necessarily disjoint,
(iii) $G\setminus H$ contains cliques $B_1$, $B_2$ (not necessarily disjoint) such that
$A_1$ is joined to $B_1$ and $A_2$ is joined to $B_2$, and
(iv) no other edges exist between $H$ and $G-H$.
Since $A_1, A_2, B_1, B_2$ are uniquely determined by $H$, we can 
refer to the interval $2$-join as either $H$ or, equivalently, as the quintuple
$(H, A_1, A_2, B_1, B_2)$. 

An interval $2$-join $(H, A_1, A_2, B_1, B_2)$ is \emph{trivial} if $V(H) = A_1
= A_2$ and \emph{canonical} if $A_1 \cap A_2 =
\emptyset$.  A canonical interval $2$-join $(H, A_1, A_2, B_1, B_2)$ with
leftmost vertex $v_1$ and rightmost vertex $v_t$ is \emph{reducible} if $H$ is
incomplete and $N_H(A_1)\setminus A_1 = N_H(v_1)\setminus A_1$ or
$N_H(A_2)\setminus A_2 = N_H(v_t)\setminus A_2$.  We call such a canonical
interval $2$-join reducible because we can \emph{reduce} it as follows.  
Suppose $H$ is incomplete and $N_H(A_1)\setminus A_1 = N_H(v_1)\setminus A_1$. Let $C
\DefinedAs N_H(v_1) \setminus A_1$, let $A_1' \DefinedAs C \setminus A_2$,
and let $A_2' \DefinedAs A_2 \setminus C$.  Since $H$ is incomplete, $v_t \in
A_2'$, so $H' \DefinedAs G[A_1' \cup A_2']$ is a nonempty linear interval
graph that gives the reduced canonical interval $2$-join $(H', A_1', A_2', A_1
\cup \parens{C \cap A_2}, B_2 \cup \parens{C \cap A_2})$.  

Note that reducing an
interval 2-join yields an interval 2-join with a smaller strip.  Note also that
reducing a canonical interval 2-join again yields a canonical interval 2-join.
The process of reducing a 2-join
allows us to refine the composition representation, and to get a representation
with more strips.  In particular, in a representation with the maximum number of
strips, every 2-join is irreducible.

%

\subsection{Coloring from Graph Orientations}
In this section, we show how we can orient the edges of a graph $G$ to prove
upper bounds on $\chil(G)$ and $\chip(G)$.  It is well-known that if a graph
$G$ is $k$-degenerate, then $\chi(G)\le k+1$; and this upper bound holds also
for $\chil(G)$ and $\chip(G)$.  If $v_1,\ldots,v_n$ is a vertex order such that
each $v_i$ has at most $k$ neighbors with smaller index, then we use at most
$k+1$ colors when we color greedily in order of increasing index.
We can view this bound in terms of orientations as follows.  Orient each edge
$v_iv_j$ as $v_i\to v_j$ when $i>j$.  Now $\chi(G)\le k+1$ whenever $G$ has an
acyclic orientation $D$ with maximum outdegree $k$.
Alon and Tarsi proved the following far-reaching generalization, where $D$ need
not be acyclic.

\begin{thm}[Alon--Tarsi~\cite{AlonTarsi}]
\label{AT-thm}
Let $f:V\to \nats$ be a list size assigment, and let $D$ be an orientation of
$E(G)$ in which $|EE(D)|\ne|EO(D)|$,  where $EE(D)$ and $EO(D)$ are the sets of
spanning Eulerian subgraphs of $D$ with an even (resp.~odd) number of edges.
If $f(v) > d_D^+(v)$ for all $v\in V(G)$, then $G$ is $f$-choosable.  (In fact,
$f$-paintable.)
\end{thm}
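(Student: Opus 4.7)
The plan is to prove the choosability statement via the Combinatorial Nullstellensatz applied to the graph polynomial, and then extend to paintability via Schauz's game-theoretic adaptation. Let $V(G) = \{v_1, \ldots, v_n\}$ and define the graph polynomial
\[
P_G(x_1, \ldots, x_n) \DefinedAs \prod_{\substack{e = v_iv_j \in E(G) \\ i < j}} (x_j - x_i).
\]
A proper coloring corresponds exactly to an assignment of values with $P_G \ne 0$. The Combinatorial Nullstellensatz (Alon) says that if $P_G$ has a nonzero coefficient on some monomial $\prod x_i^{a_i}$ with $\sum a_i = |E(G)|$, then for any lists $L(v_i)$ with $|L(v_i)| > a_i$ we can choose values $c_i \in L(v_i)$ making $P_G(c_1,\ldots,c_n) \ne 0$, yielding a proper coloring.

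The crux is then to show that the coefficient of the specific monomial $\prod_i x_i^{d_D^+(v_i)}$ in $P_G$ equals $\pm(|EE(D)| - |EO(D)|)$, so the Alon--Tarsi hypothesis forces this coefficient to be nonzero. To see this, I would expand the product: each edge $e = v_iv_j$ contributes either $x_j$ or $-x_i$, and a choice of contributions across all edges is naturally encoded as an orientation $D'$ of $E(G)$ (the endpoint chosen becomes the head). The monomial $\prod x_i^{d_D^+(v_i)}$ arises precisely from orientations $D'$ having the same outdegree sequence as $D$, with sign depending on how many edges of $D'$ disagree with $D$. The symmetric difference of $D$ and $D'$, viewed as an oriented subgraph of $D$, is exactly a spanning Eulerian subdigraph of $D$, and the sign matches its parity. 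Summing yields $\pm(|EE(D)| - |EO(D)|)$, which is nonzero by hypothesis. Combined with $d_D^+(v) < f(v)$, Nullstellensatz delivers $f$-choosability.

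For the paintability strengthening, I would follow Schauz's approach: play the on-line list coloring game, maintaining at each round a polynomial identity that certifies a winning move. On Lister's move marking subset $S \subseteq V(G)$, Painter selects an independent set $I \subseteq S$ to color. The invariant to preserve is that a suitably chosen restriction of $P_G$ still has a nonvanishing relevant coefficient; the Alon--Tarsi condition provides the base case, and standard manipulations (splitting the polynomial as in Schauz's proof) supply an $I$ that preserves the invariant while respecting the decremented ``allowed list sizes'' $f(v) - (\text{number of times } v \text{ was marked})$.

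The main obstacle I foresee is the bookkeeping in the combinatorial interpretation of the coefficient: getting the sign conventions right across the bijection between monomial contributions and Eulerian subgraphs of $D$ is delicate, and one must verify that two orientations with the same outdegree sequence differ precisely in the edges of a spanning Eulerian subdigraph. The paintability extension, while conceptually a natural game-theoretic lift, also requires care to define the right polynomial invariant that decreases properly under marking and to show Painter can always find a valid $I$; this is where the technical heart of Schauz's refinement lies.
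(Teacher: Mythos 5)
The paper does not prove Theorem~\ref{AT-thm}; it is cited from Alon--Tarsi for the choosability statement and from Schauz for the paintability strengthening, so there is no in-paper proof to compare against. Your outline correctly reproduces the standard argument: the graph polynomial, the Combinatorial Nullstellensatz, the bijection between monomial contributions and orientations with a fixed degree sequence, and the identification of the symmetric difference of two such orientations with a spanning Eulerian subdigraph whose parity controls the sign.

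One small wrinkle worth fixing in a written-up version: with $P_G=\prod_{i<j}(x_j-x_i)$ and the convention that the \emph{chosen} endpoint becomes the \emph{head}, the exponent of $x_k$ records the in-degree of $v_k$, not the out-degree, so the monomial you want is $\prod_i x_i^{d_D^-(v_i)}$ under that convention, or equivalently you should interpret the chosen endpoint as the tail. This is exactly the kind of sign/orientation bookkeeping you flag as delicate, and it does not affect the soundness of the approach, but it does need to be nailed down consistently so that the coefficient you exhibit as nonzero actually has exponent vector $(d_D^+(v_1),\ldots,d_D^+(v_n))$, matching the hypothesis $f(v)>d_D^+(v)$. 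The paintability extension you sketch is precisely Schauz's, which as the paper notes ``requires significant work''; your outline of maintaining a nonvanishing-coefficient invariant round by round is the right shape, and the technical content is in showing Painter can always split the certifying polynomial so that some independent subset of the marked set preserves the invariant.
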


Now we consider the other standard technique for coloring graphs via
orientations.
A \emph{kernel} of a digraph $D$ is an independent set $I$ such that each vertex
not in $I$ has an out-neighbor in $I$.  A digraph is \emph{kernel-perfect} if
every induced subgraph has a kernel.  Most applications of kernels to
list-coloring use the following lemma of Bondy, Boppana, and Siegel.

\begin{lem}[Bondy--Boppana--Siegel~{\cite[Remark 2.4]{AlonTarsi}},{\cite[Lemma
2.1]{Galvin}}]
\label{BBS-lemma}
Let $f:V\to \nats$ be a list size assigment, and let $D$ be a kernel-perfect 
orientation of $E(G)$.  If $f(v) > d_D^+(v)$ for all $v\in V(G)$, then $G$ is
$f$-choosable.  (In fact, $f$-paintable.)
\end{lem}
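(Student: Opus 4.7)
The plan is to prove the choosability statement and the paintability statement together by strong induction on $|V(G)|$, since the core move (pick a color class to remove via a kernel) is identical in both settings. The base case $|V(G)|=1$ is trivial: $f(v)\ge 1$ so any color in $L(v)$ works, and in the online game Painter simply colors $v$ the first time it is presented.

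For the inductive step in the $f$-choosability case, I would fix any color $c$ appearing in some $L(v)$, let $S\DefinedAs\setb{v}{V(G)}{c\in L(v)}$, and apply the kernel-perfectness of $D$ to the induced subdigraph $D[S]$ to obtain a kernel $I$. Because $I$ is independent, I can color every vertex of $I$ with $c$; I then delete $I$ and remove $c$ from $L(v)$ for every $v\in S\setminus I$. The key bookkeeping is that each such $v$ has an out-neighbor in $I$, so its out-degree in $D-I$ drops by at least one, exactly matching the unit drop in $|L(v)|$. Vertices outside $S$ lose no list entries and their out-degrees can only decrease. Hence the hypothesis $|L(v)|>d_{D-I}^+(v)$ persists on $G-I$, and since any induced subgraph of $D-I$ is an induced subgraph of $D$, $D-I$ inherits kernel-perfectness. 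The inductive hypothesis finishes the job.

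For the $f$-paintability version, I would reinterpret the same move in the online game: whenever Lister presents a set $S\subseteq V(G)$, Painter computes a kernel $I$ of $D[S]$ and colors $I$. The same accounting shows that on $G-I$, for every $v\in S\setminus I$ the quantity $f(v)-d^+(v)$ is unchanged (both $f(v)$ and $d^+(v)$ dropped by at least one, with $f$ dropping by exactly one), while for $v\notin S$ the quantity only increases. So the invariant $f(v)>d_D^+(v)$ survives the round on the smaller digraph, and induction guarantees Painter wins the remainder.

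There is no substantive obstacle here; the proof is essentially just organizing the invariant carefully. The only points that require a quick check are that kernel-perfectness is preserved under passing to induced subgraphs (immediate from the definition) and that each $v\in S\setminus I$ genuinely loses an out-neighbor (immediate from the kernel property, since $v\notin I$ forces $v$ to have an out-neighbor in $I$).
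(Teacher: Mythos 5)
Your proof is correct and is exactly the Kernel Method argument the paper sketches in the sentence following the lemma: on each round Painter (or the greedy list-colorer) takes a kernel of the subdigraph induced by the presented vertices, and kernel-perfectness plus the out-degree bookkeeping maintain the invariant. The paper gives only a one-sentence sketch of this; your write-up fills in the same induction carefully, with no substantive differences.
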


We can easily prove Lemma~\ref{BBS-lemma} by induction.  Given such an
orientation, on each round Painter
chooses as his independent set a kernel of the subgraph induced by the vertices
listed by Lister.  This technique is called the Kernel Method.
Both Theorem~\ref{AT-thm} and Lemma~\ref{BBS-lemma} were originally proved for
list coloring, and then extended to online list-coloring by
Schauz~\cite{Schauz-Paint-Correct}.  (The extension of Lemma~\ref{BBS-lemma} has
the same proof as the original.  However, the extension of Theorem~\ref{AT-thm}
requires significant work.)

Our proofs in this paper rely heavily on both of these techniques, so the following
definitions are useful.  A graph $H$ is \emph{$f$-AT} if it has an orientation
$D$ with $f(v) > d_D^+(v)$ for all $v \in V(H)$ and with different numbers of
even and odd spanning Eulerian subgraphs.  Such a $D$ is an \emph{Alon--Tarsi
orientation} for $f$ and $H$.  A graph $H$ is
$f$-KP if some supergraph $H'$ of $H$ has a kernel-perfect orientation
where $f(v) > d^+(v)$ for all $v \in V(H')$.  Allowing this supergraph for KP
gives us more power.  For example, $K_4-e$ has no kernel-perfect orientation
showing it is degree-choosable, but if we double the edge in two triangles,
then there is such an orientation. We could allow a supergraph for AT as well, but
this doesn't give us any more power, as we will see in Lemma \ref{subgraphLemma}.  
Since our focus in this paper is the Borodin--Kostochka conjecture, we have one
more definition.


\begin{defn}
A connected graph $G$ is \emph{BK-free} if it does not contain an induced
subgraph $H$ that is $f_H$-AT or $f_H$-KP where $f_H(v) \DefinedAs d_H(v) - 1 +
\Delta(G) - d_G(v)$ for all $v \in V(H)$.
\end{defn}

The motivation for this definition is that any minimal counterexample to
Theorem~\ref{MainThm} must be
BK-free.  To see this for list-coloring is easy.  Suppose $G$ is not BK-free;
say it contains subgraph $H$ that is $f$-AT or $f$-KP.  By minimality, color
$G\setminus H$.  Now, by definition, we can extend the coloring to $H$.
The same idea works for online list coloring.  On each round, Painter first
plays optimally on $G\setminus H$, then plays optimally on $H$ (omitting from
$H$ any vertices with neighbors in $G$ that Painter chose on that round).
So in particular, if $G$ is BK-free, then $\delta(G) \ge \Delta(G) - 1$.
Thus, a vertex $v$ is \emph{high} if $d(v)=\Delta(G)$ and \emph{low} if
$d(v)=\Delta(G)-1$.
When we write that subgraph $H$ is $f$-AT or $f$-KP without specifying $f$,
we mean $f(v)=d_H(v)-1+\Delta(G)-d_G(v)$ (so $f(v)=d_H(v)-1$ when $v$ is high
in $G$ and $f(v)=d_H(v)$ when $v$ is low).
\bigskip

A special case of the weak perfect graph theorem states that if $G$ is the
complement of a bipartite graph, then $\chi(G)=\omega(G)$.
In this section, we prove a strengthening of the analogous statement for
Alon--Tarsi orientations.
This result plays a key role in Section~\ref{homogeneous-pairs}, where we handle
non-linear homogeneous pairs of cliques.

It is well known that for a graph $G$, if $H\subseteq G$, then $\chi(H)\le
\chi(G)$ and $\chil(H)\le\chil(G)$.  More generally, if $f$ is a list-size
assignment and $G$ is $f$-choosable or $f$-paintable, then so is $H$.  It is
natural to expect that an analogous statement holds for Alon--Tarsi
orientations.  Indeed it does, as we show in Lemma~\ref{subgraphLemma}.

Given a graph $G$, let $v_1,\ldots,v_n$ be an arbitrary ordering of $V(G)$.
The \emph{graph polynomial}, $g$, of $G$ is given by $g=\prod_{v_iv_j\in
E(G), i<j}(x_i-x_j)$.  Note that $g$ is independent of the ordering of $V(G)$, up to a
factor of $\pm1$.  
For a polynomial $g\in\mathbb{F}[x_1,\ldots,x_n]$, we write
$g_{k_1,\ldots,k_n}$ for the coefficient in $g$ of $x_1^{k_1}\cdots x_n^{k_n}$.
Alon and Tarsi~\cite[Corollary 2.3]{AlonTarsi} observed that $G$ is $f$-AT 
precisely when there exist $k_i$ such that $f(v_i)\ge k_i+1$ for all $i$ and
the graph polynomial has $g_{k_1,\ldots,k_n}\ne 0$.

\begin{lem}
If a graph $G$ is $f$-AT (for any particular function $f$) and $e \in E(G)$,
then $G-e$ is also $f$-AT.  More generally, if $H$ is a subgraph of $G$ and $G$
is $f$-AT, then so is $H$.
\label{subgraphLemma}
\end{lem}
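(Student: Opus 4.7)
The plan is to use the coefficient characterization of $f$-AT stated just before the lemma: $G$ is $f$-AT iff there exist nonnegative integers $k_1,\ldots,k_n$ with $f(v_i)\ge k_i+1$ for every $i$ and $(g_G)_{k_1,\ldots,k_n}\ne 0$, where $g_G = \prod_{v_iv_j\in E(G),\, i<j}(x_i-x_j)$ is the graph polynomial of $G$.

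I would first prove the single-edge case. Fix an ordering $v_1,\ldots,v_n$ of $V(G)$ and let $e = v_iv_j\in E(G)$ with $i<j$. Then $g_G = (x_i-x_j)\, g_{G-e}$, so extracting the coefficient of $x_1^{k_1}\cdots x_n^{k_n}$ on both sides yields
\[
(g_G)_{k_1,\ldots,k_n} = (g_{G-e})_{k_1,\ldots,k_i-1,\ldots,k_n} - (g_{G-e})_{k_1,\ldots,k_j-1,\ldots,k_n},
\]
with the convention that any coefficient indexed by a negative exponent is $0$. If the left-hand side is nonzero then at least one term on the right is nonzero, and the associated multi-index (with either the $i$-th or the $j$-th entry dropped by $1$) is an $f$-AT certificate for $G-e$: the inequality $f(v)\ge k+1$ is only weakened when $k$ is reduced, so it is still satisfied.

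For the general case, I would iterate the single-edge step to pass to any spanning subgraph of $G$, and then handle vertex deletion as follows. To remove a vertex $v$, first delete all edges incident to $v$; at that point $x_v$ does not appear in the graph polynomial of the ambient graph, so any nonzero coefficient has exponent $0$ at $v$, and the same coefficient (reinterpreted in the remaining variables) certifies that $G-v$ is $f$-AT with respect to $f$ restricted to $V(G)\setminus\{v\}$. Combining the two operations handles an arbitrary subgraph $H$ of $G$.

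I do not foresee any serious obstacle, since the entire argument is polynomial bookkeeping. The only points worth double-checking are that a coefficient with a negative index drops out automatically (so the identity above is well-posed) and that the shifted certificate still respects the list-size inequality; both are immediate, so the lemma should follow cleanly.
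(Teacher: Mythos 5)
Your proof is correct and rests on the same idea as the paper's: pass to the coefficient characterization of $f$-AT, factor $g_G = (x_i - x_j)\,g_{G-e}$, and observe that a nonzero coefficient of $g_G$ forces a nonzero coefficient of $g_{G-e}$ at a componentwise-smaller multi-index, which is still a valid certificate. The only difference is cosmetic: you argue directly via the explicit coefficient identity, whereas the paper phrases the same observation as a contrapositive ("if every term of $g_{G-e}$ violates the bound, so does every term of $g_G$"); the vertex-deletion step is handled identically in both.
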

\begin{proof}

The second statement follows from the first by induction on $|E(G)\setminus
E(H)|$.  If $|V(H)|<|V(G)|$, then for each vertex in $V(G)\setminus V(H)$, we
first delete all of its incident edges.  Now adding or removing an isolated
vertex $v$ has no effect on the graph polynomial.

Fix a graph $G$ and a function $f$ such that $G$ is $f$-AT.
As noted above, $G$ is $f$-AT if and only if the graph polynomial of $G$ has a
nonzero term $x_1^{k_1}\cdots x_n^{k_n}$, where $x_i$ is the variable corresponding to
vertex $v_i$, such that $f(v_i) > k_i$ for all $i$.

Suppose the lemma is false, that is, there exists $e\in E(G)$ such that $G-e$
is not $f$-AT.  Let $v_1$ and $v_2$ denote the endpoints of $e$.
Since $G-e$ is not $f$-AT, its graph polynomial has no nonzero term as above. 
That is, for every term  $x_1^{j_1} \cdots x_n^{j_n}$, there exists $i$ such that 
$f(v_i) \le j_i$.  Now the graph polynomial of $G$ is formed from that of $G-e$
by multiplying by $(x_1 - x_2)$.  Terms may cancel, but the exponents never go
down.  Since $G-e$ is not $f$-AT, for every term $x_1^{j_1} \cdots x_n^{j_n}$ in
the polynomial of $G-e$, there exists $i$ such that $f(v_i) \le j_i$. 
Thus, for any remaining term $x_1^{k_1} \cdots x_n^{k_n}$ in the graph polynomial
of $G$, there exists $i$ such that $f(v_i) \le k_i$; in particular, we have
$f(v_i)\le j_i\le k_i$, where $i$ is chosen to show that $G-e$ is not $f$-AT. 
Hence, $G$ is not $f$-AT, a contradiction.
\end{proof}

Let $K_{2*t}$ denote the complete multipartite graph with $t$ parts of size 2.
Both \cite{hefetz2011two} and \cite{ZhuEtAl} showed that $K_{2*t}$ is $f$-AT
when $f(v)=t$ for all $v$.
So a direct application of Lemma~\ref{subgraphLemma} yields the following.

\begin{cor}
If $G\subseteq K_{2*t}$, then $G$ is $f$-AT when $f(v)=t$ for all $v$.  So, if
$G$ is BK-free, then $G\not\subseteq K_{2*(\Delta(G)-1)}$.
\label{cor1}
\end{cor}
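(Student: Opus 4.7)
The plan is to combine the external fact that $K_{2*t}$ is $f$-AT when $f\equiv t$ (proved in \cite{hefetz2011two,ZhuEtAl}) with Lemma~\ref{subgraphLemma}. For the first sentence, once we know $K_{2*t}$ is $f$-AT under the constant assignment $f(v)=t$, Lemma~\ref{subgraphLemma} tells us any subgraph inherits this property with the same $f$; no further work is required. (Strictly speaking, Lemma~\ref{subgraphLemma} was stated for subgraphs rather than just edge-deletions, so one reads $G\subseteq K_{2*t}$ as a subgraph of the complete multipartite graph and applies the lemma directly.)

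For the BK-free consequence, I would argue by contrapositive. Suppose $G\subseteq K_{2*(\Delta(G)-1)}$, and take $H:=G$ as an induced subgraph of itself, so that $d_H(v)=d_G(v)$ for every $v\in V(H)$. The natural list-size function from the BK-free definition then simplifies:
\[
f_H(v) \;=\; d_H(v)-1+\Delta(G)-d_G(v) \;=\; \Delta(G)-1
\]
for every $v$. Applying the first statement of the corollary with $t=\Delta(G)-1$ shows that $G=H$ is $f_H$-AT, which is exactly the witness forbidden by the BK-free definition. Hence no BK-free graph sits inside $K_{2*(\Delta(G)-1)}$.

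The only conceptual point — the ``main obstacle,'' insofar as there is one — is the bookkeeping observation that taking $H=G$ collapses the shifted function $f_H$ to the constant $\Delta(G)-1$, aligning the BK-free setup with the constant-list regime in which $K_{2*t}$ is known to be $f$-AT. After this identification, the corollary is an immediate two-line chain: cited result $\Rightarrow$ Lemma~\ref{subgraphLemma} $\Rightarrow$ contradiction with BK-freeness.
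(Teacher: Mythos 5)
Your proof is correct and follows the same route the paper implies: the first sentence is immediate from the cited fact about $K_{2*t}$ together with Lemma~\ref{subgraphLemma}, and the second sentence follows by taking $H=G$ in the BK-free definition so that $f_H(v)=d_G(v)-1+\Delta(G)-d_G(v)=\Delta(G)-1$ is the constant function. The paper leaves this last verification implicit; you have simply spelled it out, and your bookkeeping is accurate.
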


We need a refinement of Corollary~\ref{cor1} that works for
$G\subseteq\join{K_s}{K_{2*t}}$ when some of the lists are smaller than size
$s+t$.  The idea used to prove Theorem~\ref{AT-thm} was
generalized~\cite{Alon99,AlonEtAl96} to what is now called the Combinatorial
Nullstellensatz.  Schauz \cite{schauz2008algebraically} further sharpened this
result, by proving the following coefficient formula.  Versions of this
sharper result were also proved by Hefetz \cite{hefetz2011two} and Laso{\'n}
\cite{lason2010generalization}. Our presentation follows Laso{\'n}.  
Recall that for a polynomial $g\in\mathbb{F}[x_1,\ldots,x_n]$, we write
$g_{k_1,\ldots,k_n}$ for the coefficient in $g$ of $x_1^{k_1}\cdots x_n^{k_n}$.

\begin{lem}[Schauz
\cite{schauz2008algebraically}]\label{nullCoefficient}
Suppose $g \in \mathbb{F}[x_1, \ldots, x_n]$ and $k_1, \ldots, k_n \in \IN$
with $\sum_{i \in \irange{n}} k_i = \deg(g)$.  For any $C_1, \ldots, C_n
\subseteq \mathbb{F}$ with $\card{C_i} = k_i + 1$, we have
\begin{align}
g_{k_1, \ldots, k_n} = \sum_{(c_1, \ldots, c_n) \in C_1 \times \cdots \times
C_n} \frac{f(c_1, \ldots, c_n)}{N(c_1, \ldots, c_n)},
\label{coeffSum}
\end{align} where 
\begin{align}
N(c_1, \ldots, c_n) \DefinedAs \prod_{i \in \irange{n}} \prod_{d \in C_i - c_i}
(c_i - d).
\label{coeffProd}
\end{align}
\end{lem}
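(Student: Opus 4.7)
The plan is to prove the coefficient formula by linearity in $g$ combined with a coordinate-wise application of Lagrange interpolation. Both sides of the claimed identity depend linearly on $g$, so it suffices to verify the identity when $g$ is a single monomial $x_1^{j_1}\cdots x_n^{j_n}$ of total degree at most $\deg(g)=\sum_i k_i$.

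First I would observe that the denominator factors as $N(c_1,\ldots,c_n)=\prod_i N_i(c_i)$ with $N_i(c_i)\DefinedAs\prod_{d\in C_i-c_i}(c_i-d)$. Consequently, for $g=\prod_i x_i^{j_i}$, the sum on the right-hand side splits as a product of one-variable sums:
\[
\sum_{(c_1,\ldots,c_n)\in C_1\times\cdots\times C_n}\frac{\prod_i c_i^{j_i}}{\prod_i N_i(c_i)}=\prod_{i=1}^n \sum_{c_i\in C_i}\frac{c_i^{j_i}}{N_i(c_i)}.
\]

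Next I would evaluate each factor using the leading-coefficient form of Lagrange interpolation: for any polynomial $p(x)\in\mathbb{F}[x]$ of degree at most $k_i$, the coefficient of $x^{k_i}$ in $p$ equals $\sum_{c_i\in C_i}p(c_i)/N_i(c_i)$. Specializing to $p(x)=x^{j_i}$ shows that the $i$-th factor equals $1$ when $j_i=k_i$ and $0$ when $j_i<k_i$. The case $j_i>k_i$ need not be computed directly: since the monomial appears in $g$, we have $\sum_i j_i\le\deg(g)=\sum_i k_i$, so any strict inequality $j_i>k_i$ forces some $j_{i'}<k_{i'}$, making the $i'$-th factor vanish. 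Summing over all monomials with their coefficients, the only surviving contribution comes from $(j_1,\ldots,j_n)=(k_1,\ldots,k_n)$, which yields exactly $g_{k_1,\ldots,k_n}$, as required.

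The main obstacle is really just recognizing the factorization of $N$ and packaging the one-variable Lagrange identity cleanly; after that, the argument reduces to the classical computation of the leading coefficient of a Lagrange interpolant. The hypothesis $\sum_i k_i=\deg(g)$ plays a crucial role, since it is precisely what forbids the appearance of monomials in $g$ with some $j_i>k_i$ but all other $j_{i'}\ge k_{i'}$, which would otherwise evade the coordinate-wise Lagrange computation.
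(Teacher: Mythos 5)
Your proof is correct. The paper does not actually prove this lemma; it is cited to Schauz (with a note that versions also appear in Hefetz and Laso\'n, whose presentation the statement follows), so there is no in-paper argument to compare against. What you have supplied is the standard and cleanest proof: reduce by linearity to a single monomial, observe that $N(c_1,\ldots,c_n)$ factors coordinatewise so the sum tensors into a product of one-variable sums, and recognize each one-variable sum as the leading coefficient of a Lagrange interpolant on the node set $C_i$. Your handling of the case $j_i>k_i$ is the one point that requires care, and you address it correctly: the degree hypothesis $\sum_i k_i=\deg(g)$ forces some other exponent $j_{i'}$ to fall strictly below $k_{i'}$, killing that factor, so the potentially nonvanishing one-variable sums for over-degree exponents never matter. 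This is exactly the role that hypothesis plays. Two small remarks: the $f(c_1,\ldots,c_n)$ in the paper's display is a typo for $g(c_1,\ldots,c_n)$, which you have silently and correctly repaired; and it is worth noting explicitly that the elements of each $C_i$ are distinct (being a subset of $\mathbb{F}$ of size $k_i+1$), which is what makes $N_i(c_i)\neq0$ and the Lagrange step legitimate over an arbitrary field.
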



Now we use Lemma~\ref{nullCoefficient} to prove the desired strengthening of
Corollary~\ref{cor1}.

\begin{lem}\label{CliqueJoinE2Power}
Let $G = \join{K_s}{K_{2*t}}$, let $A$ be an $(s+t)$-clique in $G$, and let $B =
V(G) \setminus A$.  Now $G$ is $f$-AT whenever $f(v) \ge s + t$ for all $v \in A$
and $f(v) \ge t$ for all $v \in B$.
\end{lem}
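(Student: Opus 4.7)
The plan is to apply Schauz's coefficient formula (Lemma~\ref{nullCoefficient}) to the graph polynomial of $G$.  By the Alon--Tarsi characterization recalled just before Lemma~\ref{subgraphLemma}, it suffices to produce exponents $k_v < f(v)$ with $\sum_v k_v = |E(G)|$ and $g_{(k_v)} \ne 0$.  Label $V(G) = \{u_1, \ldots, u_s\} \cup \{a_1, \ldots, a_t\} \cup \{b_1, \ldots, b_t\}$, where $\{u_i\}$ is the $K_s$ part, $A = \{u_i\} \cup \{a_j\}$, and $a_j b_j$ is the non-edge of $K_{2*t}$ in pair $j$.  I would set $k_{u_i} = s + t - i$, $k_{a_j} = s + t - 1$, and $k_{b_j} = t - 1$.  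A direct degree count gives $\sum_v k_v = \binom{s}{2} + 2st + 2t(t-1) = |E(G)|$, and plainly $k_v \le f(v) - 1$ for every $v$.

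To compute $g_{(k_v)}$ I would apply Lemma~\ref{nullCoefficient} with $C_{u_i} = \{0, 1, \ldots, s+t-i\}$, $C_{a_j} = \{0, 1, \ldots, s+t-1\}$, and $C_{b_j} = \{0, 1, \ldots, t-1\}$.  Since $g(c) \ne 0$ precisely when $c$ is a proper coloring of $G$, the Schauz sum runs only over proper colorings with $c_v \in C_v$.  The key combinatorial observation is that such colorings are in bijection with $S_t$.  Indeed, since $B$ is a $t$-clique and $|C_{b_j}| = t$, the map $j \mapsto c(b_j)$ is a permutation $\sigma$ of $\{0, \ldots, t-1\}$.  Each $u_i$ is adjacent to every $b_j$, so $c(u_i) \in \{t, \ldots, s+t-i\}$; using that $\{u_1, \ldots, u_s\}$ is a rainbow clique, a downward induction on $i$ forces $c(u_i) = s + t - i$.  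Finally, $c(a_j)$ must avoid the colors on $\{u_i\}$ and on $\{b_{j'} : j' \ne j\}$, which, given $c(a_j) \in C_{a_j}$, forces $c(a_j) = \sigma(j)$.

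It remains to verify that for each $\sigma \in S_t$ the summand $g(c_\sigma)/N(c_\sigma)$ takes the same nonzero value, so that $g_{(k_v)}$ equals $t!$ times this value.  Factoring $g(c_\sigma)$ over the three edge classes $E(K_s)$, the join edges, and $E(K_{2*t})$: the $K_s$-contribution depends only on the forced values $c(u_i) = s+t-i$; the join contribution equals $\left(\prod_{y=0}^{t-1}\frac{(s+t-1-y)!}{(t-1-y)!}\right)^2$, because the local factor $(c(u_i) - c(a_j))(c(u_i) - c(b_j))$ equals $((s+t-i) - \sigma(j))^2$ and the outer product over $j$ sees $\sigma$ only through the multiset $\{\sigma(j)\} = \{0, \ldots, t-1\}$; and the $K_{2*t}$-contribution reduces to $(-1)^{\binom{t}{2}} D(\sigma)^4$ with $D(\sigma) = \prod_{j<j'}(\sigma(j) - \sigma(j'))$, which equals $(-1)^{\binom{t}{2}} D(\mathrm{id})^4$ because $D(\sigma) = \mathrm{sgn}(\sigma)\, D(\mathrm{id})$.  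An analogous factorial rearrangement shows $N(c_\sigma)$ is $\sigma$-invariant.  Every factor involved is nonzero, so the common value $g(c_\sigma)/N(c_\sigma)$ is nonzero and $g_{(k_v)} = t! \cdot g(c_\sigma)/N(c_\sigma) \ne 0$.  The main obstacle is this last bookkeeping, but once one observes that every Schauz summand coincides, nonvanishing is immediate.
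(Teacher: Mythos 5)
Your proof is correct and follows essentially the same route as the paper's: both choose the same ``staircase'' Schauz cells (up to a relabeling of $A$ and a shift from $\{1,\dots,k\}$ to $\{0,\dots,k-1\}$), observe that the surviving proper colorings are parameterized by $S_t$, and conclude nonvanishing because the $K_{2*t}$ part of each summand is a fourth power, so all terms share a sign. Your writeup just carries out the bookkeeping more explicitly, and along the way proves the slightly stronger fact that every Schauz summand is actually equal, not merely of the same sign.
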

\begin{proof}
Let $r = s+t$.  Say $A = \set{a_1, \ldots, a_r}$ and $B = \set{b_1, \ldots, b_t}$. 
Let $g$ be the graph polynomial of $G$, with the vertex order $a_1,b_1,
 \ldots, a_t, b_t, a_{t+1}, \ldots,a_r$. 
Recall that it suffices to show that $g_{k_1,\ldots,k_n}\ne 0$ for some choice
of $k_1,\ldots,k_n$ with $f(v_i)>k_i$ for all $i$.
To do this, we apply Lemma \ref{nullCoefficient}.  

Now $\deg(g) = |E(G)| = \binom{r}{2} + \binom{t}{2} + t(r-1)$.
For each $i \in \irange{t}$, let $L(a_i) = \irange{r}$ and $L(b_i) = \irange{t}$. 
For each $i \in \irange{r} \setminus \irange{t}$, let $L(a_i) = \irange{i}$.  Note
that $\sum_{i \in \irange{r}} \parens{|L(a_i)| - 1} + \sum_{i \in \irange{t}}
\parens{|L(b_i)| - 1} = t(r-1)+(r-t)(t+r-1)/2+t(t-1)=|E(G)|$, so these lists
will work for the $A_j$ in Lemma \ref{nullCoefficient}.  Also note that
$|L(a_i)| \le f(a_i)$ for all $i \in \irange{r}$ and $|L(b_i)| \le f(b_i)$ for
all $i \in \irange{t}$, so showing that the corresponding coefficient of $g$ is
nonzero will prove the lemma.

The sum in \eqref{coeffSum} of Lemma \ref{nullCoefficient} is zero at every term
that is not a proper coloring of $G$ from $L$.  By construction, all proper
colorings of $G$ from $L$ must assign $1,\ldots, t$ to vertices of $B$.  For
each vertex $b_i$ of $B$, its only non-neighbor is $a_i$.  Hence in every
proper coloring of $G$ from $L$, each of colors $1,\ldots, t$ is assigned to
some pair $(a_i,b_i)$.  As a result, vertex $a_{t+1}$ must get color $t+1$,
vertex $a_{t+2}$ must get color $t+2$, etc.  More precisely, for every $i\in
\{t+1,\ldots,r\}$, vertex $a_i$ gets color $i$.
Said differently, any
coloring of $G$ from $L$ can be obtained from any given such coloring by
permuting $1,\ldots, t$.  
Thus, the function $N$ in \eqref{coeffProd} of Lemma~\ref{nullCoefficient} gives
the same nonzero value on all such colorings (since for all $i,j\in[t]$, we
have $L(a_i)=L(a_j)$ and $L(b_i)=L(b_j)$).

The previous paragraph implies that the sum in \eqref{coeffSum} of Lemma
\ref{nullCoefficient} is a nonzero constant multiplied by the sum of the graph
polynomial $g'$ of $G[a_1,
\ldots, a_t, b_1, \ldots, b_t]$ evaluated at some points where $a_i$ and $b_i$
get the same value for each $i \in \irange{t}$. Any such evaluation is the
fourth power of an integer, since edges $a_ia_j, a_ib_j, b_ia_j, b_ib_j$ each
contribute the same factor.  In particular, all the terms in the sum have the
same sign.  Hence, by Lemma \ref{nullCoefficient}, the coefficient in question
is nonzero, so $G$ is $f$-AT.
\end{proof}

\begin{lem}
\label{ATPerfect}
Let $G$ be the complement of a bipartite graph with parts $A$ and $B$. If $f(v)
\ge \omega(G)$ for all $v \in A$ and $f(v) \ge |B|$ for all $v \in B$, then $G$
is $f$-AT.
\end{lem}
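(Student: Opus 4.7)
The plan is to exhibit $G$ as a subgraph of a graph of the form $\join{K_s}{K_{2*t}}$ to which Lemma~\ref{CliqueJoinE2Power} applies, and then invoke Lemma~\ref{subgraphLemma} to transfer $f$-AT back to $G$. Let $\Gamma$ be the bipartite graph with parts $A$ and $B$ whose complement is $G$, let $n = |V(G)|$, and let $\nu$ denote the maximum matching number of $\Gamma$. By K\"onig's theorem $\omega(G) = n - \nu$.

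A first attempt would be to fix a maximum matching $M$ of $\Gamma$ and take $H$ to be the complete graph on $V(G)$ with the edges of $M$ deleted; then $G \subseteq H \cong \join{K_{n-2\nu}}{K_{2*\nu}}$. This works cleanly when $M$ saturates $B$, but in general $|B|-\nu$ vertices of $B$ are unsaturated, and each such vertex lies in the $K_{n-2\nu}$ part of $H$. Lemma~\ref{CliqueJoinE2Power} then demands a list of size $\omega(G)$ at such a vertex, exceeding the hypothesized bound $f(v) \ge |B|$ whenever $|B| < \omega(G)$. I expect this mismatch to be the main obstacle.

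To bridge it I would pad $G$ with $|B| - \nu$ isolated dummy vertices $u'_1, \dots, u'_{|B|-\nu}$, obtaining $G^+$, and extend $f$ by $f^+(u'_i) \DefinedAs \omega(G)$. Pairing each $u'_i$ with a distinct $M$-unsaturated vertex of $B$ augments $M$ to a matching $M^+$ of size $|B|$ lying in the non-edges of $G^+$. Letting $H$ be the graph on $V(G^+)$ with edge set $\binom{V(G^+)}{2}\setminus M^+$, one has $H \cong \join{K_s}{K_{2*t}}$ with $s = |A| - \nu$ and $t = |B|$, and $G^+ \subseteq H$. Choose the $(s+t)$-clique of Lemma~\ref{CliqueJoinE2Power} to be $A \cup \{u'_1, \dots, u'_{|B|-\nu}\}$: this set has size $\omega(G) = s+t$ and is a clique in $H$, since each dummy is adjacent in $H$ to every non-partner and its partner lies in $B$. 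Its complement in $V(H)$ is exactly $B$, of size $t$. The list-size hypotheses now match: $f^+(v) \ge \omega(G) = s+t$ on $A \cup \{u'_i\}$ and $f^+(v) = f(v) \ge |B| = t$ on $B$. So Lemma~\ref{CliqueJoinE2Power} gives that $H$ is $f^+$-AT, Lemma~\ref{subgraphLemma} then yields that $G^+$ is $f^+$-AT, and deleting the isolated dummies recovers that $G$ is $f$-AT.
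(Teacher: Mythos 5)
Your proposal is correct and takes essentially the same route as the paper: both reduce to Lemma~\ref{CliqueJoinE2Power} and Lemma~\ref{subgraphLemma} by exhibiting $G$, after padding, inside $\join{K_s}{K_{2*|B|}}$ via a matching in $\overline{G}$, with the padding precisely bridging the gap between $|B|$ and $\omega(G)$. The only differences are cosmetic: you invoke K\"onig's theorem and attach isolated dummy partners to the $M$-unsaturated vertices of $B$, while the paper enlarges $A$ to size $\omega(G)$ (keeping the complement-of-bipartite structure) and then verifies Hall's condition directly; these are equivalent ways of producing the same embedding.
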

\begin{proof}
Define $G$ and $f$ as in the lemma.
We can assume that $\card{A}=\omega(G)$; so, in particular, $\card{A}\ge\card{B}$.  If not, then add
$\omega(G)-\card{A}$ vertices to $A$ that are adjacent only to vertices in $A$
(this does not increase $f(v)$ for any $v$).  So we have $\card{B}\le
\card{A}=\omega(G)$.  For each $S\subseteq A\cup B$, let
$\overline{N}(S)$ denote the neighbors of at least one vertex of
$S$ in $\overline{G}$, the complement of $G$.  Since $\omega(G)=|A|$, for each
$S\subseteq B$, we have $|\overline{N}(S)|\ge |S|$; otherwise $A\cup S\setminus
\overline{N}(S)$ is a clique of $G$ bigger than $A$.
So Hall's Theorem implies that $G \subseteq \join{K_{|A| - |B|}}{K_{2*|B|}}$.
Hence, by Lemma \ref{CliqueJoinE2Power} and Lemma \ref{subgraphLemma}, $G$ is $f$-AT.
\end{proof}

\section{Reduction from Quasi-line Graphs to Line Graphs}
In this section, we prove that Theorem~\ref{MainThm} is true (for quasi-line
graphs) if it is true for line graphs.  Recall our general approach, based on
the quasi-line structure theorem, given in Theorem~\ref{QuasilineStructure}.
We assume that Theorem~\ref{MainThm} is false, and choose $G$ to be a minimal
counterexample; thus, $G$ is BK-free.
In Section~\ref{circular-interval-graphs}, we prove that $G$ is not a
circular interval graph.  In Section~\ref{homogeneous-pairs}, we prove that $G$
has no non-linear homogeneous pair of cliques.  Finally,
in Section~\ref{2joins}, we consider when $G$ is a composition of linear
interval strips (which include line graphs, as a special case).  
We reduce this case to the case of line graphs, which we handle in
Section~\ref{line-graphs}.

\subsection{Handling circular interval graphs}
\label{circular-interval-graphs}
The following proof is nearly identical to the one we gave
in~\cite{BK-claw-free} for the list-coloring analogue, but we reproduce it here
for completeness.  One notable difference is that all of the list-coloring
lemmas used to show reducibility in that proof have been replaced here by the
Alon--Tarsi orientations in Figure~\ref{ATpics1}.

\begin{lem}\label{NotCircularIntervalIfBKCritical}
Let $G$ be a BK-free graph with $\omega(G)<\Delta(G)$. 
If $G$ is a circular interval graph, then $\Delta(G) < 9$.
\end{lem}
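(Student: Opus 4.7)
The plan is to argue by contradiction: assume $G$ is a BK-free circular interval graph with $\omega(G)<\Delta(G)$ and $\Delta(G)\ge 9$, and exhibit an induced subgraph $H$ that is $f_H$-AT, contradicting BK-freeness. Since BK-free forces $\delta(G)\ge\Delta(G)-1$, every vertex is \emph{high} (degree $\Delta$) or \emph{low} (degree $\Delta-1$).

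First, I would fix a circular interval representation and list the vertices $v_1,\dots,v_n$ cyclically so that each $N[v_i]$ is a consecutive arc. Writing $L_i$ and $R_i$ for the counterclockwise and clockwise neighbors of $v_i$, the arc property implies that $L_i\cup\{v_i\}$ and $R_i\cup\{v_i\}$ are cliques (any two vertices on the same side of $v_i$ lie in a common arc), so $G[N[v_i]]$ is co-bipartite. Because $|N[v_i]|\ge\Delta(G)$ but $\omega(G)\le\Delta(G)-1$, each $G[N[v_i]]$ must carry at least one non-edge between $L_i$ and $R_i$.

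Next, mimicking the argument of~\cite{BK-claw-free} but substituting the Alon--Tarsi orientations of Figure~\ref{ATpics1} (or Corollary~\ref{cor1} and Lemma~\ref{CliqueJoinE2Power}, combined with Lemma~\ref{subgraphLemma}) for the $d_1$-choosability lemmas used there, I would locate a vertex $v_i$ and an induced subgraph $H$ built from $N[v_i]$ (possibly enlarged by one or two adjacent vertices along the arc) satisfying the $f_H$-AT hypothesis. Two natural regimes arise: when $\omega(G)$ is much smaller than $\Delta(G)$, $G[N[v_i]]$ contains many non-edges between $L_i$ and $R_i$ and hence an induced $K_{2*t}$ for which Corollary~\ref{cor1} applies; when $\omega(G)$ is close to $\Delta(G)-1$ the overlap $L_i\cap R_i$ is large, $G[N[v_i]]$ embeds into $\join{K_s}{K_{2*t}}$, and Lemma~\ref{CliqueJoinE2Power} applies.

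The main obstacle is calibrating $f_H(u)=d_H(u)-1+\Delta(G)-d_G(u)$ against the thresholds required by the orientations, particularly at vertices on the boundary of the arc, where $d_H(u)$ is smallest; if such a $u$ is also high in $G$ then $f_H(u)=d_H(u)-1$ can fall below what the orientations need. I would resolve this by extending $H$ by one or two vertices past each boundary along the arc, or by a short case split on $|L_i\cap R_i|$ and on which boundary vertices are high versus low. The hypothesis $\Delta(G)\ge 9$ enters precisely here, providing enough vertices along each arc to realize at least one of the reducible configurations.
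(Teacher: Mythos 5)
Your sketch identifies the right toolbox --- Corollary~\ref{cor1}, Lemma~\ref{CliqueJoinE2Power}, Lemma~\ref{subgraphLemma}, and the orientations in Figure~\ref{ATpics1} --- and correctly observes that in a circular interval graph each $G[N[v_i]]$ is co-bipartite (parts $L_i\cup\{v_i\}$ and $R_i$), which is why Lemma~\ref{ATPerfect} looks like the natural lever. But this is a gap, not a proof. The core difficulty you name in your last paragraph is never resolved, and it is not a small detail: for a vertex $u$ at the clockwise end of $R_i$ with $u$ high, one has $d_H(u)$ exactly $|R_i|$ and hence $f_H(u)=|R_i|-1$, which is one short of the $|B|$ threshold in Lemma~\ref{ATPerfect}; a symmetric shortfall against $\omega(H)$ occurs on the $L_i$ side. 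Your proposed fix of ``extending $H$ by one or two vertices past each boundary along the arc'' breaks exactly the structural feature you need: an arc strictly larger than $N[v_i]$ is a linear interval graph which need not be co-bipartite (already $P_5$ has a $C_5$ in its complement), so Lemma~\ref{ATPerfect} and the $K_{2*t}$ reasoning no longer apply to the enlarged $H$. And the ``two regimes'' dichotomy ($\omega$ small vs.\ $\omega$ near $\Delta-1$) is not exhaustive and is not carried through in either regime; you never exhibit a concrete induced $H$ with a verified $f_H$-AT certificate.

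The paper's proof is also by contradiction and also cites the same reducible configurations, but it is organized very differently: it fixes a \emph{maximum clique} $K=\{v_1,\dots,v_t\}$ (with $t\le\Delta-1$) and runs a four-claim case analysis on the adjacencies of $v_1,v_2,v_{t-3},\dots,v_t$ with the vertices $v_{t+1},v_{t+2}$ just outside $K$. Corollary~\ref{cor1} enters only once, to kill the case where $G$ itself sits inside $K_{2*(\Delta-1)}$ (via a Hall's-theorem matching in the complement); everything else is a specific small configuration ($K_4-e$, $\join{K_3}{E_2}$, $\join{K_4}{E_2}$, $\join{K_3}{P_4}$, $\join{K_2}{\text{antichair}}$, $\join{K_4}{B}$ with $B$ not almost complete) checked to be $f$-AT in Figure~\ref{ATpics1}. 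Lemma~\ref{ATPerfect} and Lemma~\ref{CliqueJoinE2Power} are never used in this lemma at all; they are reserved for the homogeneous-pair argument in Section~\ref{homogeneous-pairs}. The hypothesis $\Delta\ge 9$ is used in a concrete way (e.g.\ to force $1+\ceil{\Delta/2}\ge 6$ and to guarantee the existence of enough distinct vertices $v_{t-3},\dots,v_{t+2}$), not as a vague supply of ``enough vertices.'' To turn your outline into a proof you would either need to carry out the maximum-clique case analysis yourself, or show concretely how to patch the boundary shortfall in $f_H$ while preserving co-bipartiteness --- neither of which the current sketch does.
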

\begin{proof}
Suppose to the contrary that $G$ is a circular interval graph that is
BK-free, has $\Delta(G) \ge 9$, and does not contain $K_{\Delta(G)}$.
Let $K$ be a maximum clique in $G$.  By symmetry we may assume that
$V(K)=\{v_1,v_2,\ldots,v_t\}$ for some $t\le \Delta-1$; further, if possible we
label the vertices so that $v_{t-3}\adj v_{t+1}$ and the edge goes through
$v_{t-2},v_{t-1},v_t$.

\claim{1} {$v_1\nonadj v_{t+1}$ and $v_2\nonadj v_{t+2}$ and
$v_1\nonadj v_{t+2}$.}
Assume the contrary.
Clearly we cannot have $v_1\adj v_{t+1}$ and have the edge go through
$v_2,v_3,\ldots, v_t$ (since then we get a clique of size $t+1$).
Similarly, we cannot have $v_2\adj v_{t+2}$ and have the edge go through
$v_3,v_4,\ldots,v_{t+1}$.  So assume the edge $v_1v_{t+2}$ exists and 
goes around the other way.
If $v_1\adj v_{t+1}$, then let $G'=G\setminus \{v_1\}$ and if $v_1\nonadj
v_{t+1}$, then let $G'=G\setminus \{v_1,v_{t+1}\}$.  Now let
$V_1=\{v_2,v_3,\ldots,v_t\}$ and $V_2=V(G')\setminus V_1$.  Let $K'=G[V_1]$ and
$L'=G[V_2]$; note that $K'$ and $L'$ are each cliques of size at most
$\Delta-2$.  Now for each $S\subseteq V_2$,
we have $|N_{\overline{G}}(S)\cap V_1|\ge |S|$
(otherwise we get a clique of size $t$ in $G'$ and a clique of size $t+1$ in $G$).  Now by Hall's Theorem, we have a matching in $\overline{G}$ between $V_1$
and $V_2$ that saturates $V_2$.  
This implies that $G'\subseteq K_{2*(\Delta-2)}$,
which in turn gives $G\subseteq K_{2*(\Delta-1)}$.  
This contradicts Corollary~\ref{cor1}.

\begin{figure}[ht]
\subfloat[\label{K4-e}: EE=2, EO=1]{
\makebox[.33\textwidth]{
\begin{tikzpicture}[scale = 8]
\tikzstyle{VertexStyle}=[shape = circle, minimum size = 6pt, inner sep = 1.2pt, draw]
\Vertex[x = 0.45, y = 0.70, L = \small {$2$}]{v0}
\Vertex[x = 0.45, y = 0.50, L = \small {$1$}]{v1}
\Vertex[x = 0.60, y = 0.70, L = \small {$1$}]{v2}
\Vertex[x = 0.60, y = 0.50, L = \small {$1$}]{v3}
\Edge[style = {pre}](v0)(v2)
\Edge[style = {post}](v1)(v2)
\Edge[style = {post}](v0)(v3)
\Edge[style = {pre}](v1)(v3)
\Edge[style = {post}](v1)(v0)
\end{tikzpicture}}}
\subfloat[\label{K3vE2} EE = 4, EO = 3]{
\makebox[.33\textwidth]{
\begin{tikzpicture}[scale = 8]
\tikzstyle{VertexStyle}=[shape = circle, minimum size = 6pt, inner sep = 1.2pt, draw]
\Vertex[x = 0.45, y = 0.70, L = \small {$2$}]{v0}
\Vertex[x = 0.45, y = 0.50, L = \small {$3$}]{v1}
\Vertex[x = 0.60, y = 0.70, L = \small {$1$}]{v2}
\Vertex[x = 0.60, y = 0.50, L = \small {$2$}]{v3}
\Vertex[x = 0.35, y = 0.60, L = \small {$1$}]{v4}
\Edge[style = {pre}](v0)(v2)
\Edge[style = {pre}](v1)(v2)
\Edge[style = {post}](v0)(v3)
\Edge[style = {pre}](v1)(v3)
\Edge[style = {post}](v1)(v0)
\Edge[style = {pre}](v4)(v0)
\Edge[style = {post}](v4)(v1)
\Edge[style = {pre}](v2)(v4)
\Edge[style = {pre}](v3)(v4)
\end{tikzpicture}}}
\subfloat[\label{K2vAntichair} EE = 81, EO = 80]{
\makebox[.33\textwidth]{
\begin{tikzpicture}[scale = 8]
\tikzstyle{VertexStyle} = []
\tikzstyle{EdgeStyle} = []
\tikzstyle{labeledStyle}=[shape = circle, minimum size = 6pt, inner sep = 1.2pt, draw]
\tikzstyle{PreEdge}=[pre]
\tikzstyle{PostEdge}=[post]
\Vertex[style = labeledStyle, x = 0.20, y = 0.80, L = \small {2}]{v0}
\Vertex[style = labeledStyle, x = 0.30, y = 0.85, L = \small {2}]{v1}
\Vertex[style = labeledStyle, x = 0.30, y = 0.75, L = \small {3}]{v2}
\Vertex[style = labeledStyle, x = 0.40, y = 0.80, L = \small {2}]{v3}
\Vertex[style = labeledStyle, x = 0.50, y = 0.80, L = \small {2}]{v4}
\Vertex[style = labeledStyle, x = 0.25, y = 0.60, L = \small {2}]{v5}
\Vertex[style = labeledStyle, x = 0.45, y = 0.60, L = \small {4}]{v6}
\Edge[style = PreEdge, labelstyle={auto=right, fill=none}](v1)(v3)
\Edge[style = PreEdge, labelstyle={auto=right, fill=none}](v2)(v3)
\Edge[style = PostEdge, label = \small {}, labelstyle={auto=right, fill=none}](v1)(v0)
\Edge[style = PostEdge, label = \small {}, labelstyle={auto=right, fill=none}](v2)(v0)
\Edge[style = PreEdge, labelstyle={auto=right, fill=none}](v1)(v2)
\Edge[style = PreEdge, labelstyle={auto=right, fill=none}](v3)(v4)
\Edge[style = PreEdge, labelstyle={auto=right, fill=none}](v6)(v5)
\Edge[style = PostEdge, label = \small {}, labelstyle={auto=right, fill=none}](v0)(v6)
\Edge[style = PostEdge, label = \tiny {}, labelstyle={auto=right, fill=none}](v1)(v6)
\Edge[style = PreEdge, labelstyle={auto=right, fill=none}](v2)(v6)
\Edge[style = PostEdge, label = \tiny {}, labelstyle={auto=right, fill=none}](v3)(v6)
\Edge[style = PreEdge, labelstyle={auto=right, fill=none}](v4)(v6)
\Edge[style = PostEdge, label = \tiny {}, labelstyle={auto=right, fill=none}](v0)(v5)
\Edge[style = PostEdge, label = \tiny {}, labelstyle={auto=right, fill=none}](v1)(v5)
\Edge[style = PreEdge, labelstyle={auto=right, fill=none}](v2)(v5)
\Edge[style = PreEdge, labelstyle={auto=right, fill=none}](v3)(v5)
\Edge[style = PreEdge, labelstyle={auto=right, fill=none}](v4)(v5)
\end{tikzpicture}}}

\subfloat[\label{K4vE2} EE = 16, EO = 17]{
\makebox[.33\textwidth]{
\begin{tikzpicture}[scale = 8]
\tikzstyle{VertexStyle}=[shape = circle, minimum size = 6pt, inner sep = 1.2pt, draw]
\Vertex[x = 0.40, y = 0.85, L = \small {$2$}]{v0}
\Vertex[x = 0.55, y = 0.85, L = \small {$3$}]{v1}
\Vertex[x = 0.40, y = 0.65, L = \small {$4$}]{v2}
\Vertex[x = 0.55, y = 0.65, L = \small {$1$}]{v3}
\Vertex[x = 0.70, y = 0.90, L = \small {$2$}]{v4}
\Vertex[x = 0.70, y = 0.60, L = \small {$2$}]{v5}
\Edge[style = {post}](v0)(v4)
\Edge[style = {pre}](v1)(v4)
\Edge[style = {pre}](v2)(v4)
\Edge[style = {post}](v3)(v4)
\Edge[style = {post}](v0)(v5)
\Edge[style = {pre}](v1)(v5)
\Edge[style = {pre}](v2)(v5)
\Edge[style = {post}](v3)(v5)
\Edge[style = {post}](v0)(v3)
\Edge[style = {pre}](v1)(v3)
\Edge[style = {pre}](v2)(v3)
\Edge[style = {pre}](v0)(v2)
\Edge[style = {post}](v1)(v2)
\Edge[style = {pre}](v0)(v1)
\end{tikzpicture}}}
\subfloat[\label{K4v2E2a} EE = 512, EO = 515]{
\makebox[.33\textwidth]{
\begin{tikzpicture}[scale = 8]
\tikzstyle{VertexStyle}=[shape = circle, minimum size = 6pt, inner sep = 1.2pt,
draw]
\Vertex[x = 0.45, y = 0.70, L = \small {$2$}]{v0}
\Vertex[x = 0.30, y = 0.70, L = \small {$3$}]{v1}
\Vertex[x = 0.45, y = 0.50, L = \small {$5$}]{v2}
\Vertex[x = 0.65, y = 0.75, L = \small {$2$}]{v3}
\Vertex[x = 0.65, y = 0.65, L = \small {$2$}]{v4}
\Vertex[x = 0.65, y = 0.55, L = \small {$2$}]{v5}
\Vertex[x = 0.65, y = 0.45, L = \small {$2$}]{v6}
\Vertex[x = 0.30, y = 0.50, L = \small {$4$}]{v7}
\Edge[style = {post}](v1)(v0)
\Edge[style = {post}](v2)(v0)
\Edge[style = {post}](v2)(v1)
\Edge[style = {post}](v0)(v3)
\Edge[style = {pre}](v1)(v3)
\Edge[style = {pre}](v2)(v3)
\Edge[style = {post}](v0)(v4)
\Edge[style = {post}](v1)(v4)
\Edge[style = {pre}](v2)(v4)
\Edge[style = {post}](v0)(v5)
\Edge[style = {pre}](v1)(v5)
\Edge[style = {pre}](v2)(v5)
\Edge[style = {post}](v0)(v6)
\Edge[style = {post}](v1)(v6)
\Edge[style = {pre}](v2)(v6)
\Edge[style = {pre}](v7)(v1)
\Edge[style = {post}](v7)(v2)
\Edge[style = {pre}](v7)(v0)
\Edge[style = {pre}](v3)(v7)
\Edge[style = {post}](v4)(v7)
\Edge[style = {pre}](v5)(v7)
\Edge[style = {post}](v6)(v7)
\end{tikzpicture}}}
\subfloat[\label{K4v2E2b} EE = 751, EO = 750]{
\makebox[.33\textwidth]{
\begin{tikzpicture}[scale = 8]
\tikzstyle{VertexStyle}=[shape = circle, minimum size = 6pt, inner sep = 1.2pt, draw]
\Vertex[x = 0.45, y = 0.70, L = \small {$2$}]{v0}
\Vertex[x = 0.30, y = 0.70, L = \small {$3$}]{v1}
\Vertex[x = 0.45, y = 0.50, L = \small {$4$}]{v2}
\Vertex[x = 0.65, y = 0.75, L = \small {$2$}]{v3}
\Vertex[x = 0.65, y = 0.65, L = \small {$3$}]{v4}
\Vertex[x = 0.65, y = 0.55, L = \small {$2$}]{v5}
\Vertex[x = 0.65, y = 0.45, L = \small {$2$}]{v6}
\Vertex[x = 0.30, y = 0.50, L = \small {$5$}]{v7}
\Edge[style = {post}](v1)(v0)
\Edge[style = {post}](v2)(v0)
\Edge[style = {post}](v2)(v1)
\Edge[style = {post}](v0)(v3)
\Edge[style = {pre}](v1)(v3)
\Edge[style = {pre}](v2)(v3)
\Edge[style = {post}](v0)(v4)
\Edge[style = {post}](v1)(v4)
\Edge[style = {pre}](v2)(v4)
\Edge[style = {post}](v0)(v5)
\Edge[style = {pre}](v1)(v5)
\Edge[style = {post}](v2)(v5)
\Edge[style = {post}](v0)(v6)
\Edge[style = {post}](v1)(v6)
\Edge[style = {pre}](v2)(v6)
\Edge[style = {pre}](v7)(v1)
\Edge[style = {post}](v7)(v2)
\Edge[style = {pre}](v7)(v0)
\Edge[style = {pre}](v3)(v7)
\Edge[style = {post}](v4)(v7)
\Edge[style = {post}](v5)(v7)
\Edge[style = {post}](v6)(v7)
\Edge[style = {post}](v3)(v4)
\end{tikzpicture}}}

\subfloat[\label{K4v2E2c} EE = 1097, EO = 1096]{
\makebox[.35\textwidth]{
\begin{tikzpicture}[scale = 8]
\tikzstyle{VertexStyle}=[shape = circle, minimum size = 6pt, inner sep = 1.2pt, draw]
\Vertex[x = 0.45, y = 0.70, L = \small {$2$}]{v0}
\Vertex[x = 0.30, y = 0.70, L = \small {$3$}]{v1}
\Vertex[x = 0.45, y = 0.50, L = \small {$4$}]{v2}
\Vertex[x = 0.65, y = 0.75, L = \small {$3$}]{v3}
\Vertex[x = 0.65, y = 0.65, L = \small {$2$}]{v4}
\Vertex[x = 0.65, y = 0.55, L = \small {$2$}]{v5}
\Vertex[x = 0.65, y = 0.45, L = \small {$3$}]{v6}
\Vertex[x = 0.30, y = 0.50, L = \small {$5$}]{v7}
\Edge[style = {post}](v1)(v0)
\Edge[style = {post}](v2)(v0)
\Edge[style = {post}](v2)(v1)
\Edge[style = {post}](v0)(v3)
\Edge[style = {pre}](v1)(v3)
\Edge[style = {pre}](v2)(v3)
\Edge[style = {post}](v0)(v4)
\Edge[style = {pre}](v1)(v4)
\Edge[style = {post}](v2)(v4)
\Edge[style = {post}](v0)(v5)
\Edge[style = {post}](v1)(v5)
\Edge[style = {pre}](v2)(v5)
\Edge[style = {post}](v0)(v6)
\Edge[style = {post}](v1)(v6)
\Edge[style = {pre}](v2)(v6)
\Edge[style = {pre}](v7)(v1)
\Edge[style = {post}](v7)(v2)
\Edge[style = {pre}](v7)(v0)
\Edge[style = {pre}](v3)(v7)
\Edge[style = {post}](v4)(v7)
\Edge[style = {post}](v5)(v7)
\Edge[style = {post}](v6)(v7)
\Edge[style = {pre}](v3)(v4)
\Edge[style = {post}](v5)(v6)
\end{tikzpicture}}}
\subfloat[\label{K3vP4} EE = 108, EO = 107]{
\makebox[.33\textwidth]{
\begin{tikzpicture}[scale = 8]
\tikzstyle{VertexStyle}=[shape = circle, minimum size = 6pt, inner sep = 1.2pt, draw]
\Vertex[x = 0.45, y = 0.70, L = \small {$2$}]{v0}
\Vertex[x = 0.35, y = 0.60, L = \small {$3$}]{v1}
\Vertex[x = 0.45, y = 0.50, L = \small {$4$}]{v2}
\Vertex[x = 0.65, y = 0.75, L = \small {$2$}]{v3}
\Vertex[x = 0.65, y = 0.65, L = \small {$2$}]{v4}
\Vertex[x = 0.65, y = 0.55, L = \small {$2$}]{v5}
\Vertex[x = 0.65, y = 0.45, L = \small {$3$}]{v6}
\Edge[style = {post}](v1)(v0)
\Edge[style = {post}](v2)(v0)
\Edge[style = {post}](v2)(v1)
\Edge[style = {post}](v4)(v5)
\Edge[style = {pre}](v6)(v5)
\Edge[style = {post}](v0)(v3)
\Edge[style = {pre}](v1)(v3)
\Edge[style = {pre}](v2)(v3)
\Edge[style = {post}](v0)(v4)
\Edge[style = {post}](v1)(v4)
\Edge[style = {pre}](v2)(v4)
\Edge[style = {post}](v0)(v5)
\Edge[style = {pre}](v1)(v5)
\Edge[style = {pre}](v2)(v5)
\Edge[style = {post}](v0)(v6)
\Edge[style = {post}](v1)(v6)
\Edge[style = {pre}](v2)(v6)
\Edge[style = {pre}](v3)(v4)
\end{tikzpicture}}}
\subfloat[\label{K2vC4}: EE = 30, EO = 28]{
\makebox[.33\textwidth]{
\begin{tikzpicture}[scale = 8]
\tikzstyle{VertexStyle}=[shape = circle, minimum size = 6pt, inner sep = 1.2pt, draw]
\Vertex[x = 0.45, y = 0.70, L = \small {$2$}]{v0}
\Vertex[x = 0.45, y = 0.50, L = \small {$3$}]{v1}
\Vertex[x = 0.65, y = 0.75, L = \small {$2$}]{v2}
\Vertex[x = 0.65, y = 0.45, L = \small {$2$}]{v3}
\Vertex[x = 0.85, y = 0.75, L = \small {$2$}]{v4}
\Vertex[x = 0.85, y = 0.45, L = \small {$2$}]{v5}
\Edge[style = {pre}](v0)(v2)
\Edge[style = {pre}](v1)(v2)
\Edge[style = {post}](v0)(v3)
\Edge[style = {pre}](v1)(v3)
\Edge[style = {post}](v1)(v0)
\Edge[style = {pre}](v5)(v4)
\Edge[style = {post}](v5)(v3)
\Edge[style = {post}](v4)(v2)
\Edge[style = {pre}](v2)(v3)
\Edge[style = {post}](v0)(v4)
\Edge[style = {post}](v1)(v4)
\Edge[style = {post}](v0)(v5)
\Edge[style = {pre}](v1)(v5)
\end{tikzpicture}}}

\caption{Subgraphs forbidden by Alon-Tarsi orientations, used in
Lemma~\ref{NotCircularIntervalIfBKCritical}.
\label{ATpics1}}
\end{figure}
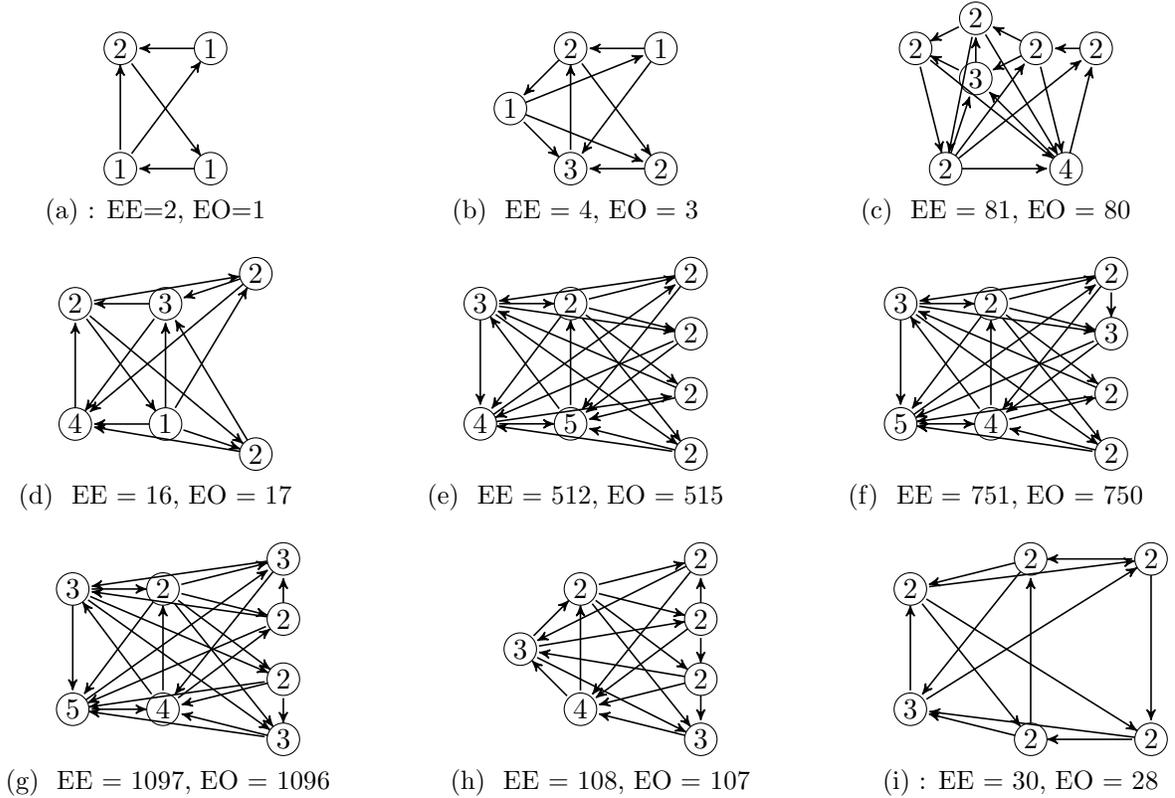

\claim{2} {$v_{t-3}\adj v_{t+1}$ and the edge passes through
$v_{t-2},v_{t-1},v_t$.}
Assume the contrary.
Since $t\le \Delta-1$ and $\delta(G)\ge\Delta-1$, each vertex in $K$ has a
neighbor outside of $K$; in particular, $v_4$ has some neighbor outside of $K$.
If $t\ge 7$, then by (reflectional) symmetry we could have labeled the
vertices so that $v_{t-3}\adj v_{t+1}$ (and the edge passes through
$v_{t-2},v_{t-1},v_t$).  So we must have $t\le 6$.
Each vertex $v$ that is high has either at least $\ceil{\Delta/2}$ clockwise
neighbors or at least $\ceil{\Delta/2}$ counterclockwise neighbors.  
This gives a clique of size $1+\ceil{\Delta/2}\ge 6$.  Thus, $t=6$ and
$v_{t-3}=v_3$.
If $v_3$ is high, then either $v_3$ has at
least 4 clockwise neighbors, so $v_3\adj v_7$, or else $v_3$ has at least 6
counterclockwise neighbors, so $|K|\ge 7$.  Thus, we may assume that $v_3$ is
low; by symmetry (and our choice of labeling prior to Claim~1) $v_4$ is also
low.  Now since $v_4$ has only 3 counterclockwise neighbors, we get $v_4\adj
v_7$ (in fact, we get $v_4\adj v_9$).  Thus, $\{v_3,v_4,v_5,v_6,v_7\}$ induces
$\join{K_3}{E_2}$ with a low degree vertex in both the $K_3$ and the $E_2$, which 
is $f$-AT, as shown in Figure~\ref{ATpics1}\subref{K3vE2}.

\claim{3} {$v_{t-2} \nonadj v_{t+2}$ and $v_{t-1}\nonadj v_{t+2}$.}
First, assume to the contrary that $v_{t-2}\adj v_{t+2}$.  By
Claim~1 the edge must go through $v_{t-1},v_t,v_{t+1}$.  If $v_{t-3}\adj v_{t+2}$,
then the set $\{v_1,v_2,v_{t-3},v_{t-2},v_{t-1}$, $v_t,v_{t+1},v_{t+2}\}$ induces
$\join{K_4}{B}$, where $B$ is not almost complete; this subgraph is $f$-AT,
as shown in Figure~\ref{ATpics1}\subref{K4v2E2a}--\subref{K2vC4}.
If $v_{t-3}\nonadj v_{t+2}$, then the set $\{v_1, v_{t-3}, v_{t-2}, v_{t-1}$,
$v_t, v_{t+1}, v_{t+2}\}$ induces $\join{K_3}{P_4}$ is $f$-AT, 
as shown in Figure~\ref{ATpics1}\subref{K3vP4}.
Hence, $v_{t-2}\nonadj v_{t+2}$.

So assume that $v_{t-1}\adj v_{t+2}$.  Now
$\{v_1,v_{t-3},v_{t-2},v_{t-1},v_t,v_{t+1},v_{t+2}\}$ induces
$\join{K_2}{\mbox{antichair}}$ (with $v_{t-1},v_t$ in the $K_2$), 
which is $f$-AT, as shown in Figure~\ref{ATpics1}\subref{K2vAntichair}.

\claim{4} {The lemma is true.}
Let $S=\{v_{t-3},v_{t-2},v_{t-1},v_t\}$.  If any vertex of $S$ is low, then
$S\cup\{v_1,v_{t+1}\}$ induces $\join{K_4}{E_2}$ with a low vertex in the $K_4$, 
which is $f$-AT, as shown in Figure~\ref{ATpics1}\subref{K4vE2}.
So all of $S$ is high.  If
$v_t\nonadj v_{t+2}$, then $\{v_t,v_{t-1},\ldots, v_{t-\Delta+1}\}$ (subscripts
are modulo $n$) induces $K_{\Delta}$.  So $v_t\adj v_{t+2}$.  Since
$v_{t-1}\nonadj v_{t+2}$ and all of $S$ is high, there exists a vertex
$v_n$ that is not adjacent to $v_t$ but is adjacent to the rest of $S$.
Formally, $v_n\in (\cap_{v\in (S\setminus\{v_t\})}N(v))\setminus N(v_t)$.  
Clearly the edge from $v_{t-1}$ to $v_n$ must go through $v_{t-2}$.
Since $v_n\nonadj v_t$, we have $n<1$.  However, if $n<0$, then $G$ contains a
clique larger than $K$.  Thus, we may assume $v_n=v_0$.

If $v_n\adj v_{t+1}$, then $G$ can be covered by two
cliques: $K$ and $G\setminus K$.  As in Claim 1, we show that $\overline{G}$ has
a matching between $K$ and $G\setminus K$ that saturates $G\setminus K$.  Thus,
$G\subseteq K_{2*(\Delta-1)}$, which contradicts Corollary~\ref{cor1}.
Since $v_n\nonadj v_{t+1}$, we get $\join{K_3}{P_4}$ induced by
$\{v_{t+1},v_t,v_{t-1},v_{t-2},v_{t-3},v_1,v_n\}$.  Again, this subgraph is
$f$-AT, as shown in Figure~\ref{ATpics1}\subref{K3vP4}.
\end{proof}

\subsection{Handling non-linear homogeneous pairs of cliques}
\label{homogeneous-pairs}


\begin{lem}\label{NoNonLinear}
%
Let $G$ be a BK-free graph with $\omega(G)<\Delta(G)$. 
If $G$ has a non-linear homogeneous pair of cliques, then $\Delta(G)<9$.
\end{lem}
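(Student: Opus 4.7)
I assume for contradiction that $G$ is BK-free with $\omega(G)<\Delta(G)$, $\Delta(G)\ge 9$, and $(A_1,A_2)$ is a non-linear homogeneous pair in $G$. The plan is to exhibit an induced subgraph $H^*$ of $G$ that is $f_{H^*}$-AT, contradicting BK-freeness.

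The natural starting point is $H=G[A_1\cup A_2]$. Since $A_1$ and $A_2$ are cliques, $\overline{H}$ is bipartite with parts $A_1,A_2$, so Lemma \ref{ATPerfect} applies to $H$. A crucial simplification from homogeneity is that every $v\in A_i$ has exactly $n_i:=|N_i^{\mathrm{ext}}|$ external neighbors (the common neighborhood of $A_i$ in $G\setminus(A_1\cup A_2)$), so $d_G(v)-d_H(v)=n_i$, and hence $f_H(v)=d_H(v)-1+\Delta-d_G(v)=\Delta-1-n_i$ is constant on each side. Thus applying Lemma \ref{ATPerfect} yields the desired contradiction once I verify, for some choice of which side plays the role of $A$ and which plays the role of $B$, that $\Delta-1-n_i\ge\omega(H)$ on the $A$-side and $\Delta-1-n_j\ge|A_j|$ on the $B$-side. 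I would bound $n_1,n_2$ by combining: (a) the identity $\omega(H)=|A_1|+|A_2|-\mu$ from K\"onig's theorem applied to the bipartite non-edge graph, with $\mu\ge 2$ forced by the induced $C_4$, giving $\omega(H)\le|A_1|+|A_2|-2$; (b) the clique bound $|A_i|+\omega(G[N_i^{\mathrm{ext}}])\le\omega(G)\le\Delta-1$, since $A_i$ together with any clique in $N_i^{\mathrm{ext}}$ remains a clique in $G$; and (c) the BK-free degree identity $d_G(v)=|A_i|-1+e_j(v)+n_i\in\{\Delta-1,\Delta\}$ for $v\in A_i$, sharpened by the observation that $e_j(v)\ge 1$ on the two vertices of $A_i$ that lie on the $C_4$.

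The main obstacle is the tight regime where $|A_1|$ and $|A_2|$ are very small, typified by $|A_1|=|A_2|=2$ with $H\cong C_4$; in that case $f_H$ can drop to $1$ on a high vertex of the $C_4$, and Lemma \ref{ATPerfect} applied to $H$ alone is insufficient. To handle this I plan to enlarge $H$: find a suitable clique $S\subseteq N_1^{\mathrm{ext}}\cap N_2^{\mathrm{ext}}$ of common neighbors and set $H^*=G[A_1\cup A_2\cup S]$. This preserves the complement-of-bipartite structure (with clique sides $A_1\cup S$ and $A_2$, say), it boosts $f_{H^*}$ on every vertex of $A_1\cup A_2$ by $|S|$, and it automatically gives large $f_{H^*}$ on the vertices of $S$ themselves because their $H^*$-degree is $|A_1|+|A_2|+|S|-1$. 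When no common-neighbor clique of the required size is available, I would instead extend $H$ by a one-sided clique $S_1\subseteq N_1^{\mathrm{ext}}$ and recognize the resulting subgraph as a subgraph of $\join{K_s}{K_{2*t}}$ for appropriate $s,t$, applying Lemma \ref{CliqueJoinE2Power} together with Lemma \ref{subgraphLemma}. The crux, and what makes the $\Delta\ge 9$ hypothesis enter, is the existence argument: using bounds (b) and (c), $\Delta\ge 9$ together with $\omega(G)\le\Delta-1$ forces $N_1^{\mathrm{ext}}$ (and $N_2^{\mathrm{ext}}$) to contain enough structure to build such an $S$ in every case where $H$ itself is too small.
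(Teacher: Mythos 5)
Your starting point --- apply Lemma~\ref{ATPerfect} to $H = G[A_1 \cup A_2]$ and exploit the constancy of $f_H$ on each side of the homogeneous pair --- is the same as the paper's, which makes exactly this observation. But your ``plan A'' can never close the argument: since $G$ is assumed BK-free, $H$ is \emph{not} $f_H$-AT, so for every labeling of the sides the hypotheses of Lemma~\ref{ATPerfect} necessarily fail. This is precisely what the paper's Claim~0 extracts from BK-freeness, and it means the K\"onig-theorem and degree bounds in your items (a)--(c) can never verify the hypotheses on $H$ alone. The whole weight of the argument therefore falls on your ``plan B'' of enlarging $H$ by a clique $S$ and reapplying Lemma~\ref{ATPerfect} or Lemma~\ref{CliqueJoinE2Power}, and that is where the real gap lies: you never produce $S$. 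Nothing in the hypotheses forces $N_1^{\mathrm{ext}} \cap N_2^{\mathrm{ext}}$ to be nonempty, let alone to contain a clique of the needed size, and nothing guarantees a one-sided $N_i^{\mathrm{ext}}$ contains one either; the claim that ``$\Delta \ge 9$ forces enough structure'' is an aspiration rather than an argument, and the list-size bookkeeping for $f_{H^*}$ (in particular bounding $\omega(H^*)$ against $f_{H^*}$ on $A_1\cup A_2$) is also left undone.

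The missing ingredient is the quasi-line hypothesis. The lemma's statement omits it, but the paper's proof invokes it repeatedly --- to conclude that among two nonadjacent external neighbors of a vertex, one must be joined to all of $H$; to argue that certain sets of external neighbors form a clique; and to bound $d(w)$ at the very end. Even with quasi-line available, the paper only reaches the existence of a single vertex $w$ joined to all of $H$ at its Claim~5, after four rounds of case analysis that dispose of every intermediate configuration using the specific $f$-AT orientations of Figure~\ref{ATpics2} and reduce to $\card{A},\card{B}\le 3$. Your proposal compresses all of that into ``find a suitable clique $S$'' without invoking quasi-line or supplying the case analysis, so it does not constitute a proof as written.
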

\begin{proof}
Suppose to the contrary that $G$ is a BK-free graph with $\Delta(G) \ge 9$
and $\omega(G)<\Delta(G)$ and that $G$ contains a non-linear homogeneous pair of
cliques $(A,B)$.  Let $H \DefinedAs G[A \cup B]$ and let $f_H(v) \DefinedAs
d_H(v) - 1 + \Delta(G) - d_G(v)$ for all $v \in V(H)$.  Now $H$ is not
complete, since it it is non-linear and hence induces a $C_4$.
Our general approach is to show that $G$ contains some induced $f$-AT subgraph
in Figure~\ref{ATpics2}, where $f(v)=d(v)$
when $v$ is low and $f(v)=d(v)-1$ otherwise.

Note that $f_H(v) = d_H(v) - 1$ if $v$ is high and $f_H(v) = d_H(v)$ if $v$ is
low.  For each $X \in \set{A,B}$, let $\delta_X \DefinedAs \min_{v \in X}
d_H(v)$ and $\Delta_X \DefinedAs \max_{v \in X} d_H(v)$.  Since each vertex in
$G$ has degree either $\Delta(G)$ or $\Delta(G)-1$, and $(A,B)$ is a
homogeneous pair of cliques, we have $\Delta_X \le \delta_X + 1$ for each $X \in
\set{A,B}$;  equality holds when $X$ contains both a high and a low vertex.
Also, $f_H(v) = \Delta_X - 1$ for each $v \in X$ whenever $X$ contains a high
vertex.  Let $W$ be an arbitrary maximum clique in $H$.
%

\claim{0} {For $\set{X,Y} = \set{A,B}$, either $\Delta_X \le |W|$ or $\Delta_Y \le |Y|$.}
%
Since $G$ is BK-free, $f_H$ cannot satisfy the hypotheses of
Lemma~\ref{ATPerfect}.  Unpacking what that means gives precisely $\Delta_X \le
|W|$ or $\Delta_Y \le |Y|$.

\begin{figure}[bht]
\centering
\subfloat[3 and 3]{
\begin{tikzpicture}[scale = 8]
\tikzstyle{VertexStyle}=[shape = circle, minimum size = 6pt, inner sep = 1.2pt,
                                 draw]
\Vertex[x = 0.45, y = 0.80, L = \small {}]{v0}
\Vertex[x = 0.65, y = 0.80, L = \small {}]{v1}
\Vertex[x = 0.45, y = 0.60, L = \small {}]{v2}
\Vertex[x = 0.65, y = 0.60, L = \small {}]{v3}
\Vertex[x = 0.40, y = 0.45, L = \small {$u$}]{v4}
\Vertex[x = 0.70, y = 0.45, L = \small {$z$}]{v5}
\Edge[](v0)(v3)
\Edge[](v0)(v4)
\Edge[](v1)(v0)
\Edge[](v1)(v2)
\Edge[](v2)(v0)
\Edge[](v3)(v1)
\Edge[](v3)(v2)
\Edge[](v4)(v2)
\Edge[](v4)(v5)
\Edge[](v5)(v1)
\Edge[](v5)(v3)
\end{tikzpicture}
\label{fig:AandBAre3a}
}
\subfloat[3 and 2]{
\begin{tikzpicture}[scale = 8]
\tikzstyle{VertexStyle}=[shape = circle, minimum size = 6pt, inner sep = 1.2pt, draw]
\Vertex[x = 0.60, y = 0.80, L = \small {}]{v0}
\Vertex[x = 0.60, y = 0.60, L = \small {}]{v1}
\Vertex[x = 0.80, y = 0.60, L = \small {}]{v2}
\Vertex[x = 0.55, y = 0.45, L = \small {$u$}]{v3}
\Vertex[x = 0.85, y = 0.45, L = \small {$z$}]{v4}
\Edge[](v0)(v2)
\Edge[](v0)(v3)
\Edge[](v1)(v0)
\Edge[](v2)(v1)
\Edge[](v3)(v1)
\Edge[](v3)(v4)
\Edge[](v4)(v2)
\end{tikzpicture}
\label{fig:AandBAre3b}
}
\subfloat[2 and 2]{
\begin{tikzpicture}[scale = 8]
\tikzstyle{VertexStyle}=[shape = circle, minimum size = 6pt, inner sep = 1.2pt, draw]
\Vertex[x = 0.60, y = 0.60, L = \tiny {}]{v0}
\Vertex[x = 0.80, y = 0.60, L = \tiny {}]{v1}
\Vertex[x = 0.55, y = 0.45, L = \small {$u$}]{v2}
\Vertex[x = 0.85, y = 0.45, L = \small {$z$}]{v3}
\Edge[](v1)(v0)
\Edge[](v2)(v0)
\Edge[](v2)(v3)
\Edge[](v3)(v1)
\end{tikzpicture}
\label{fig:AandBAre3c}
}
\caption{Possible non-linear homogeneous pair of cliques.}
\label{fig:AandBAre3}
\end{figure}
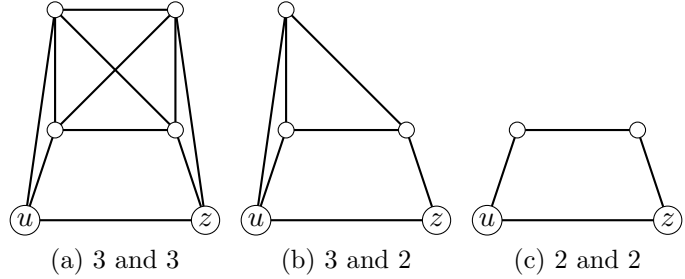

\claim{1} {Either $\card{W \cap A} \le 1$ or $\card{W \cap B} \le 1$.}
%
Suppose instead that $\card{W \cap X} \ge 2$ for all $X \in \set{A,B}$.  Now
$\Delta_X\ge |X|+1$ for all $X \in \set{A,B}$, so applying Claim 0 gives
$\Delta_X \le |W|$ for all $X \in \set{A,B}$.  For all $v \in W \cap X$, we have
$d_H(v) \ge |W| - 1 + |X \setminus W|$.  Hence $|W| \ge \Delta_X \ge |W| - 1 +
|X \setminus W|$, which gives $|X \cap W| \ge |X| - 1$ for all $X \in \set{A,B}$.

Now we show that {$A \subseteq W$ or $B \subseteq W$.} Suppose instead that
$\card{W \cap A} = |A| - 1$ and $\card{W \cap B} = |B| - 1$.  Let $\set{u} = A
\setminus W$ and $\set{z} = B\setminus W$.  If $u$ has a neighbor $v\in W \cap
B$, then $d_H(v)\ge |W|+1$, a contradiction. Similarly, $z$ has no neighbors in
$W \cap A$.  So $d_A(z)\le 1$ and $d_B(u) \le 1$, which implies that $\Delta_A
\le |A| + 1$ and $\Delta_B \le |B| + 1$. Now $|B|+1\ge\Delta_B \ge |B|-1+|W\cap
A|$, so $|W\cap A|\le 2$.  By assumption $|W\cap A| = |A|-1$, so 
$|A|= |W\cap A|+1\le 3$; similarly, $|B|\le 3$.

Recall that $|W\cap A|\ge 2$, $|W\cap B|\ge 2$, $|A|\le 3$, and $|B|\le 3$.
Now our assumption that $\card{W \cap A} = |A| - 1$ and $\card{W \cap B} = |B|
- 1$  gives $|A| = |B| = 3$.  So $H$ must be as in
Figure~\ref{fig:AandBAre3}\subref{fig:AandBAre3a}.  Hence $d_G(u) = d_G(z) =
\Delta(G) - 1$.  But now either (i) some vertex outside $H$ is joined to just
one side of $H$ and $G$ contains the forbidden induced subgraph in Figure
\ref{ATpics2}\subref{fig:A3B3EulerA} or (ii) some vertex outside $H$ is joined
to both sides of $H$ and $G$ contains the graph in
Figure~\ref{ATpics2}\subref{fig:A3B3EulerB}.
 Each of these induced subgraphs is forbidden, which gives a contradiction. 
Thus, $A\subseteq W$ or $B\subseteq W$.

By symmetry, suppose $A\subseteq W$.  Now we get $\card{W}\ge
\card{A}+\card{B\cap W} \ge \card{A}+(\card{B}-1)$.
Thus, $H$ is almost complete, so it does not contain an induced $C_4$; this
contradicts the hypothesis of the lemma and so proves the claim.

\claim{2}{Either $\omega(H) \le 2$ or the only possible maximum cliques in $H$
are $A$ and $B$.}
%
Suppose to the contrary that $\omega(H) \ge 3$ and that $W$ is a maximum clique
in $H$ with $\card{W \cap A}\ge 1$ and ${\card{W \cap B} \ge 1}$.  By Claim 1
and symmetry, we may assume $\card{W \cap B} = 1$.  
Let $z_1, \ldots, z_t$ be the vertices of $B \setminus W$.  
Since $|W|=\omega(H)\ge 3$ and $|W\cap B|=1$, 
we must have $\card{W \cap A} \ge 2$ and hence $\card{W \cap A} \ge |A| - 1$,
as in the first paragraph of the proof of Claim~1.

So we have the two cases (i) $A\subseteq W$ and (ii) $|A\cap W|=|A|-1$.
First suppose that $A\subseteq W$.  
We begin with the case $|A|=2$.  Since $|W\cap B|=1$ and $3\le |W|\le
|A|+|W\cap B|\le 3$, we get that $|B|\le |W|\le 3$.  
We must have $|B|\ge 3$, since otherwise $H$ is almost complete, so it cannot
induce $C_4$.  So $|B|=3$, and $\Delta_B\ge |W| + 1$.  Since
$\Delta_B-\delta_B\le 1$, each of $z_1$ and $z_2$ has a neighbor in $A$; thus
$\Delta_A\ge |A| + 1$, which contradicts Claim 0.  Hence, $\card{A}\ge 3$.

\begin{figure}
\centering
\subfloat[E = 14, O = 12]{
\makebox[.33\textwidth]{
\begin{tikzpicture}[scale = 8]
\tikzstyle{VertexStyle}=[shape = circle, minimum size = 6pt, inner sep = 1.2pt, draw]
\Vertex[x = 0.55, y = 0.80, L = \small {$2$}]{v0}
\Vertex[x = 0.75, y = 0.80, L = \small {$2$}]{v1}
\Vertex[x = 0.55, y = 0.60, L = \small {$3$}]{v2}
\Vertex[x = 0.75, y = 0.60, L = \small {$3$}]{v3}
\Vertex[x = 0.50, y = 0.45, L = \small {$1$}]{v4}
\Vertex[x = 0.80, y = 0.45, L = \small {$1$}]{v5}
\Vertex[x = 1.05, y = 0.65, L = \small {$2$}]{v6}
\Edge[style = {pre}](v0)(v1)
\Edge[style = {pre}](v2)(v1)
\Edge[style = {post}](v3)(v1)
\Edge[style = {post}](v2)(v0)
\Edge[style = {pre}](v3)(v0)
\Edge[style = {post}](v3)(v2)
\Edge[style = {post}](v0)(v4)
\Edge[style = {pre}](v2)(v4)
\Edge[style = {pre}](v1)(v5)
\Edge[style = {pre}](v3)(v5)
\Edge[style = {pre}](v5)(v4)
\Edge[style = {post}](v1)(v6)
\Edge[style = {pre}](v3)(v6)
\Edge[style = {post}](v5)(v6)
\end{tikzpicture}
\label{fig:A3B3EulerA}
}}
\subfloat[E = 4, O = 2]{
\makebox[.33\textwidth]{
\begin{tikzpicture}[scale = 8]
\tikzstyle{VertexStyle}=[shape = circle, minimum size = 6pt, inner sep = 1.2pt, draw]
\Vertex[x = 0.35, y = 0.80, L = \small {$2$}]{v0}
\Vertex[x = 0.55, y = 0.80, L = \small {$2$}]{v1}
\Vertex[x = 0.30, y = 0.60, L = \small {$1$}]{v2}
\Vertex[x = 0.60, y = 0.60, L = \small {$1$}]{v3}
\Vertex[x = 0.45, y = 0.70, L = \small {$2$}]{v4}
\Edge[style = {post}](v1)(v0)
\Edge[style = {post}](v2)(v0)
\Edge[style = {pre}](v2)(v3)
\Edge[style = {post}](v3)(v1)
\Edge[style = {post}](v0)(v4)
\Edge[style = {pre}](v1)(v4)
\Edge[style = {post}](v2)(v4)
\Edge[style = {pre}](v3)(v4)
\end{tikzpicture}
\label{fig:A3B3EulerB}
}}
%
\subfloat[EE = 3, EO = 1]{
\makebox[.33\textwidth]{
\begin{tikzpicture}[scale = 8]
\tikzstyle{VertexStyle} = []
\tikzstyle{EdgeStyle} = []
\tikzstyle{labeledStyle}=[shape = circle, minimum size = 6pt, inner sep = 1.2pt, draw]
\tikzstyle{unlabeledStyle}=[shape = circle, minimum size = 6pt, inner sep = 1.2pt, draw]
\tikzstyle{type1}=[post]
\Vertex[style = labeledStyle, x = 0.30, y = 0.45, L = \small {1}]{v0}
\Vertex[style = labeledStyle, x = 0.30, y = 0.65, L = \small {2}]{v1}
\Vertex[style = labeledStyle, x = 0.50, y = 0.65, L = \small {1}]{v2}
\Vertex[style = labeledStyle, x = 0.50, y = 0.45, L = \small {1}]{v3}
\Vertex[style = labeledStyle, x = 0.60, y = 0.75, L = \small {2}]{v4}
\Edge[style = type1, label = \small {}, labelstyle={auto=right, fill=none}](v1)(v0)
\Edge[style = type1, label = \small {}, labelstyle={auto=right, fill=none}](v2)(v1)
\Edge[style = type1, label = \small {}, labelstyle={auto=right, fill=none}](v3)(v2)
\Edge[style = type1, label = \small {}, labelstyle={auto=right, fill=none}](v2)(v4)
\Edge[style = type1, label = \small {}, labelstyle={auto=right, fill=none}](v3)(v4)
\Edge[style = type1, labelstyle={auto=right, fill=none}](v4)(v1)
\Edge[style = type1, label = \small {}, labelstyle={auto=right, fill=none}](v0)(v3)
\label{fig:A3B2Euler}
\end{tikzpicture}}}
%

\subfloat[EE = 14, EO = 15]{
\makebox[.33\textwidth]{
\begin{tikzpicture}[scale = 8]
\tikzstyle{VertexStyle}=[shape = circle, minimum size = 6pt, inner sep = 1.2pt, draw]
\Vertex[x = 0.70, y = 0.80, L = \small {$2$}]{v0}
\Vertex[x = 0.50, y = 0.60, L = \small {$2$}]{v1}
\Vertex[x = 0.70, y = 0.60, L = \small {$3$}]{v2}
\Vertex[x = 0.40, y = 0.40, L = \small {$1$}]{v3}
\Vertex[x = 0.80, y = 0.40, L = \small {$2$}]{v4}
\Vertex[x = 0.95, y = 0.65, L = \small {$2$}]{v5}
\Edge[style = {post}](v1)(v0)
\Edge[style = {post}](v2)(v0)
\Edge[style = {post}](v2)(v1)
\Edge[style = {pre}](v1)(v3)
\Edge[style = {post}](v0)(v4)
\Edge[style = {pre}](v2)(v4)
\Edge[style = {post}](v1)(v4)
\Edge[style = {post}](v3)(v2)
\Edge[style = {pre}](v3)(v0)
\Edge[style = {post}](v0)(v5)
\Edge[style = {pre}](v2)(v5)
\Edge[style = {post}](v4)(v5)
\end{tikzpicture}
\label{fig:A3B3EulerC}
}}
\subfloat[EE = 13, EO = 11]{
\makebox[.33\textwidth]{
\begin{tikzpicture}[scale = 8]
\tikzstyle{VertexStyle}=[shape = circle, minimum size = 6pt, inner sep = 1.2pt, draw]
\Vertex[x = 0.60, y = 0.60, L = \small {$2$}]{v0}
\Vertex[x = 0.80, y = 0.60, L = \small {$2$}]{v1}
\Vertex[x = 0.55, y = 0.45, L = \small {$2$}]{v2}
\Vertex[x = 0.85, y = 0.45, L = \small {$1$}]{v3}
\Vertex[x = 0.60, y = 0.80, L = \small {$3$}]{v4}
\Vertex[x = 0.75, y = 0.25, L = \small {$2$}]{v5}
\Edge[style = {post}](v1)(v0)
\Edge[style = {post}](v2)(v0)
\Edge[style = {pre}](v2)(v3)
\Edge[style = {post}](v3)(v1)
\Edge[style = {pre}](v4)(v0)
\Edge[style = {pre}](v4)(v2)
\Edge[style = {post}](v4)(v1)
\Edge[style = {post}](v5)(v2)
\Edge[style = {pre}](v5)(v0)
\Edge[style = {pre}](v5)(v1)
\Edge[style = {post}](v5)(v3)
\Edge[style = {post}](v5)(v4)
\end{tikzpicture}
\label{fig:A3B2Outside}
}}
\subfloat[EE = 5, EO = 3]{
\makebox[.33\textwidth]{
\begin{tikzpicture}[scale = 8]
\tikzstyle{VertexStyle} = []
\tikzstyle{EdgeStyle} = []
\tikzstyle{labeledStyle}=[shape = circle, minimum size = 6pt, inner sep = 1.2pt, draw]
\tikzstyle{type1}=[post]
\Vertex[style = labeledStyle, x = 0.80, y = 0.75, L = \small {1}]{v0}
\Vertex[style = labeledStyle, x = 0.80, y = 0.55, L = \small {2}]{v1}
\Vertex[style = labeledStyle, x = 0.60, y = 0.75, L = \small {1}]{v2}
\Vertex[style = labeledStyle, x = 0.60, y = 0.55, L = \small {2}]{v3}
\Vertex[style = labeledStyle, x = 0.50, y = 0.85, L = \small {2}]{v4}
\Vertex[style = labeledStyle, x = 0.90, y = 0.85, L = \small {5}]{v5}
\Edge[style = type1, label = \small {}, labelstyle={auto=right, fill=none}](v0)(v2)
\Edge[style = type1, label = \small {}, labelstyle={auto=right, fill=none}](v0)(v4)
\Edge[style = type1, label = \small {}, labelstyle={auto=right, fill=none}](v0)(v5)
\Edge[style = type1, label = \small {}, labelstyle={auto=right, fill=none}](v1)(v0)
\Edge[style = type1, label = \small {}, labelstyle={auto=right, fill=none}](v2)(v3)
\Edge[style = type1, label = \small {}, labelstyle={auto=right, fill=none}](v2)(v4)
\Edge[style = type1, label = \small {}, labelstyle={auto=right, fill=none}](v2)(v5)
\Edge[style = type1, label = \small {}, labelstyle={auto=right, fill=none}](v3)(v1)
\Edge[style = type1, label = \small {}, labelstyle={auto=right, fill=none}](v4)(v3)
\Edge[style = type1, label = \small {}, labelstyle={auto=right, fill=none}](v4)(v5)
\Edge[style = type1, label = \small {}, labelstyle={auto=right, fill=none}](v5)(v1)
\label{fig:A2B4Euler}
\end{tikzpicture}}}

\subfloat[\label{E2vC4}: EE = 22, EO = 16]{
\makebox[.33\textwidth]{
\begin{tikzpicture}[scale = 8]
\tikzstyle{VertexStyle}=[shape = circle, minimum size = 6pt, inner sep = 1.2pt, draw]
\Vertex[x = 0.45, y = 0.70, L = \small {$2$}]{v0}
\Vertex[x = 0.45, y = 0.50, L = \small {$2$}]{v1}
\Vertex[x = 0.65, y = 0.75, L = \small {$2$}]{v2}
\Vertex[x = 0.65, y = 0.45, L = \small {$2$}]{v3}
\Vertex[x = 0.85, y = 0.75, L = \small {$2$}]{v4}
\Vertex[x = 0.85, y = 0.45, L = \small {$2$}]{v5}
\Edge[style = {pre}](v0)(v2)
\Edge[style = {post}](v1)(v2)
\Edge[style = {post}](v0)(v3)
\Edge[style = {post}](v1)(v3)
\Edge[style = {post}](v5)(v4)
\Edge[style = {pre}](v5)(v3)
\Edge[style = {pre}](v4)(v2)
\Edge[style = {pre}](v2)(v3)
\Edge[style = {pre}](v0)(v4)
\Edge[style = {pre}](v1)(v4)
\Edge[style = {post}](v0)(v5)
\Edge[style = {pre}](v1)(v5)
\end{tikzpicture}}}
\subfloat[\label{fig2g} EE = 72, EO = 74]{
\makebox[.3\textwidth]{
\begin{tikzpicture}[scale = 10]
\tikzstyle{VertexStyle} = []
\tikzstyle{EdgeStyle} = []
\tikzstyle{labeledStyle}=[shape = circle, minimum size = 6pt, inner sep = 1.2pt, draw]
\tikzstyle{type1}=[post]
\Vertex[style = labeledStyle, x = 0.50, y = 0.80, L = \small {2}]{v0}
\Vertex[style = labeledStyle, x = 0.70, y = 0.80, L = \small {2}]{v1}
\Vertex[style = labeledStyle, x = 0.50, y = 0.60, L = \small {2}]{v2}
\Vertex[style = labeledStyle, x = 0.70, y = 0.60, L = \small {2}]{v3}
\Vertex[style = labeledStyle, x = 0.60, y = 0.70, L = \small {2}]{v4}
\Vertex[style = labeledStyle, x = 0.30, y = 0.80, L = \small {3}]{v5}
\Vertex[style = labeledStyle, x = 0.30, y = 0.60, L = \small {4}]{v6}
\Vertex[style = labeledStyle, x = 0.20, y = 0.70, L = \small {2}]{v7}
\Edge[style = type1, label = \small {}, labelstyle={auto=right, fill=none}](v0)(v4)
\Edge[style = type1, label = \tiny {}, labelstyle={auto=right, fill=none}](v0)(v5)
\Edge[style = type1, label = \tiny {}, labelstyle={auto=right, fill=none}](v0)(v6)
\Edge[style = type1, label = \tiny {}, labelstyle={auto=right, fill=none}](v0)(v7)
\Edge[style = type1, label = \tiny {}, labelstyle={auto=right, fill=none}](v1)(v0)
\Edge[style = type1, label = \tiny {}, labelstyle={auto=right, fill=none}](v2)(v0)
\Edge[style = type1, label = \tiny {}, labelstyle={auto=right, fill=none}](v2)(v3)
\Edge[style = type1, label = \tiny {}, labelstyle={auto=right, fill=none}](v2)(v6)
\Edge[style = type1, label = \tiny {}, labelstyle={auto=right, fill=none}](v2)(v7)
\Edge[style = type1, label = \tiny {}, labelstyle={auto=right, fill=none}](v3)(v1)
\Edge[style = type1, label = \tiny {}, labelstyle={auto=right, fill=none}](v4)(v1)
\Edge[style = type1, label = \tiny {}, labelstyle={auto=right, fill=none}](v4)(v2)
\Edge[style = type1, label = \tiny {}, labelstyle={auto=right, fill=none}](v4)(v3)
\Edge[style = type1, label = \tiny {}, labelstyle={auto=right, fill=none}](v4)(v6)
\Edge[style = type1, label = \tiny {}, labelstyle={auto=right, fill=none}](v5)(v2)
\Edge[style = type1, label = \tiny {}, labelstyle={auto=right, fill=none}](v5)(v4)
\Edge[style = type1, label = \tiny {}, labelstyle={auto=right, fill=none}](v6)(v5)
\Edge[style = type1, label = \tiny {}, labelstyle={auto=right, fill=none}](v7)(v5)
\Edge[style = type1, label = \tiny {}, labelstyle={auto=right, fill=none}](v7)(v6)
\label{fig:biggie}
\end{tikzpicture}}}

\caption{Subgraphs forbidden by Alon--Tarsi orientations, used in
Lemma~\ref{NoNonLinear}.
\label{ATpics2}}
\end{figure}
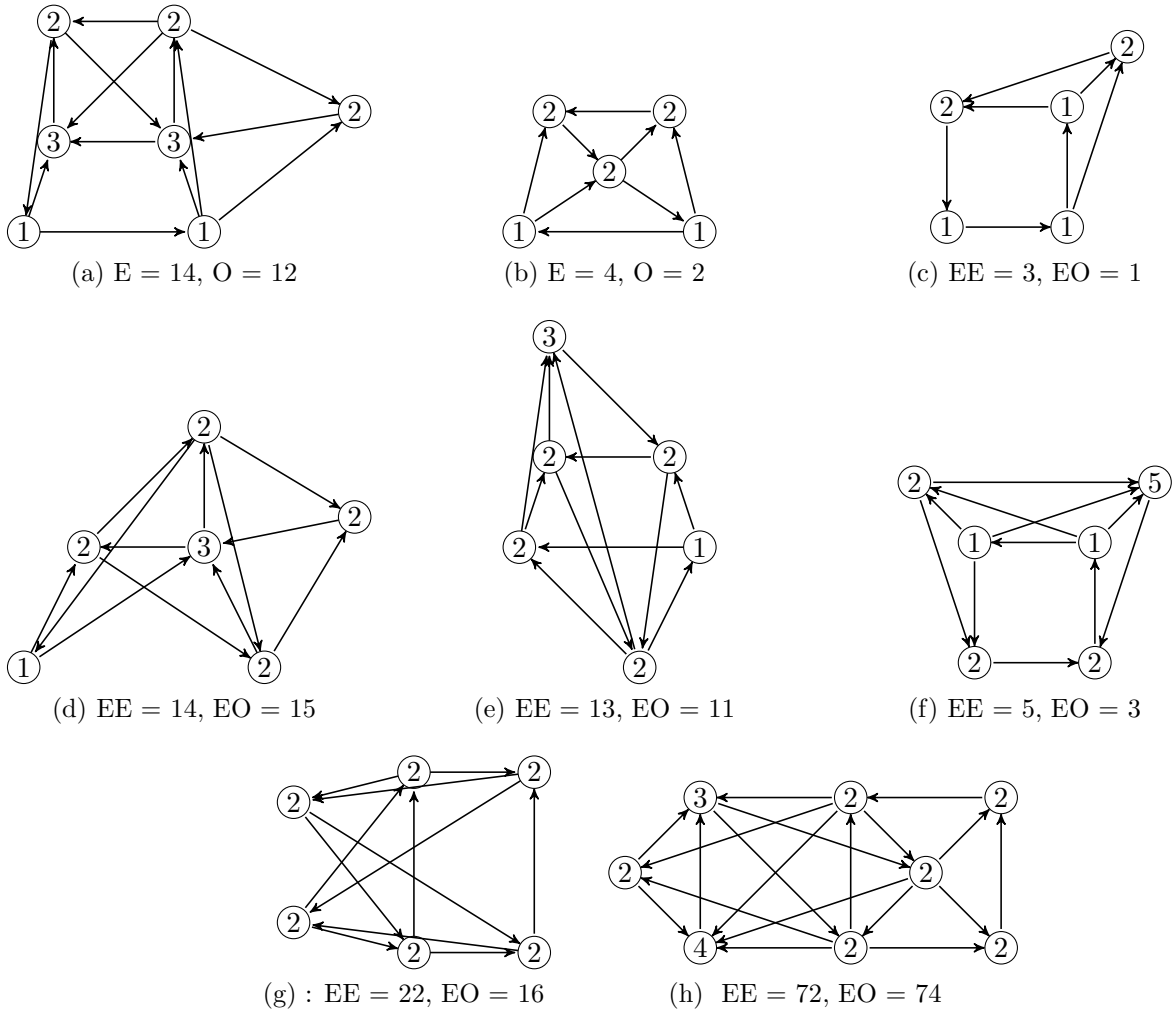

Suppose now that $A\subseteq W$ and $|A|\ge 3$.  Since $\Delta_B-\delta_B\le
1$, each $z_i$ is nonadjacent to exactly one vertex of $A$; call it $u_i$.  By
Claim 0, no vertex in $A$ can have two neighbors outside of $W$; so $t = 1$ and
hence $|B| = 2$.  Now again, $|W|\ge |A|+|B|-1$, so $H$ cannot induce a $C_4$;
this contradicts the hypothesis of the lemma.  Thus, $A\not\subseteq W$.

So assume instead that $|A\cap W| = |A|-1$. 
Note that $|A|\ge 3$, since $|A\cap W|\ge 2$.

Let $\{u\} = A \setminus W$ and $\{w\}= B \cap W$.  Note that $u$ is not
adjacent to $w$, since $u\notin W$.  Since $|W \cap A| \ge 2$, Claim 0 implies
that $\Delta_A \le |W|$.  Hence each $v \in A \cap W$ has no neighbors in $B - w$.
Also $w$ has at least two neighbors in $A$, so $\delta_B \ge |B|$.  
Now $\Delta_B - \delta_B \le 1$ implies that $|A| \le 3$.  Since also $|A|\ge
3$, we have $|A|=3$ and each vertex of $W\setminus B$ is adjacent to $u$.  
Now $|A|=|W| \ge \Delta_A \ge d_H(u) = |A|-1+t$, so $t\le 1$.
Actually $t = 1$, since otherwise $H$ is almost complete, so
it cannot induce $C_4$.  

Now $H$ must be as in Figure~\ref{fig:AandBAre3}\subref{fig:AandBAre3b}, with
$z$ low.  If some vertex outside of $H$ is adjacent to all of $H$, then $G$
contains the $f$-AT subgraph in Figure~\ref{ATpics2}\subref{fig:A3B2Outside}, a
contradiction. 
So each neighbor of $A$ outside of $H$ is adjacent to $A$ and not adjacent to
$B$.  Since $G$ is quasi-line and none of its neighbors outside $H$ is adjacent
to $B$, all of these outside neighbors form a clique.
If some vertex of $A$ is high, then these outside neighbors, together with $A$,
form a $K_{\Delta(G)}$, which is a contradiction.  Otherwise, all of $A$ is low.
In this case, $G$ contains the reducible configuration in
Figure~\ref{ATpics2}\subref{fig:A3B2Euler}.

By symmetry, we henceforth assume that $|A|\ge |B|$.

\claim{3}{$A$ is a maximum clique in $H$ and $\Delta_A \le |A|$.}
%
First, suppose $\omega(H) \le 2$.  Recall that $H$ induces a $C_4$, so $|A| \ge 2$.
In fact, we must have $H=C_4$, since $2\ge \omega(H)\ge |A|\ge|B|\ge 2$.
Hence, the degree condition is satisfied.

Now assume $\omega(H) > 2$.  By Claim 2,  no maximum clique in $H$ has vertices
in both $A$ and $B$.  In particular, $A$ is a maximum clique.  
If $\Delta_A > |A|$, then there exists $v\in A$ with at least two neighbors in
$B$.  Since $\Delta_A-\delta_A\le 1$, each other vertex in $A$ has at least one
neighbor in $B$.  Now, since $|A|\ge |B|$, some vertex in $B$ has at least two
neighbors in $A$.  However, now we get $\Delta_A>|A|$ and $\Delta_B>|B|$, which
contradicts Claim~0.  Thus, the degree condition holds.

\claim{4}{If all of $A$ is low, then the lemma is true.}
%
First suppose that $|A|=|B|$.  
If all of $B$ is low, then we have an induced $C_4$ in the low
vertex subgraph, which is $f$-AT.  So suppose that some vertex $b\in B$ is
high.  Since $|A|=|B|$, Claim~3 also shows that $\Delta_B\le |B|$.  Hence,
each vertex in $B$ has at most one neighbor in $A$.  So $b$ must have
$\Delta-1$ neighbors in $G\setminus A$.  If all neighbors of $b$ in $G\setminus
H$ induce a clique, then $G$ contains a copy of $K_{\Delta(G)}$, which
contradicts that $\omega(G)<\Delta(G)$.  So $b$ has
nonadjacent neighbors $u_1,u_2$ in $G\setminus H$.  Since $G$ is quasi-line, at
least one of $u_1$ and $u_2$ is complete to $A$; by symmetry, say this is
$u_1$.  Now consider an induced $C_4$ in $H$, together with $u_1$.
Since all of $A$ is low, this is the configuration shown in
Figure~\ref{ATpics2}\subref{fig:A3B3EulerB}, which is $f$-AT.

So assume instead that $|A|>|B|$.  Since $\Delta_A\le |A|$ (and all of $A$ is
low), each vertex of $A$ must have exactly one neighbor in $B$.  Since
$\Delta_B-\delta_B\le 1$, for some integer $k$, each vertex in $B$ has either
$k$ or $k+1$ neighbors in $A$.  Since $|A|>|B|$, we have $k\ge 1$.
If there exist $b_1,b_2\in B$ each with at least two neighbors in $A$, then we
have the configuration in Figure~\ref{ATpics2}\subref{fig:A2B4Euler},
which is $f$-AT.  So we may assume that
$k=1$ and $B$ has at most one high vertex.  If $B$ has at least two low
vertices, then we have an induced $C_4$ of low vertices, which is $f$-AT, a
contradiction.  So $B$ must contain exactly one high vertex and one low vertex.
Now we have the configuration in Figure~\ref{ATpics2}\subref{fig:A3B2Euler},
which is $f$-AT.  
%
%

\claim{5}{There exists a unique vertex $w$ that is joined to all of $H$.}
%
Since $\Delta_A \le |A|$, each vertex of $A$ has at most one neighbor in $B$.
Since $A$ is not all low, $A\cup N(A)\setminus H$ 
has $\Delta$ vertices.  Since $G$ does not contain
$K_{\Delta}$, some pair of neighbors of $A$ in $G\setminus H$ must be nonadjacent.
Since $G$ is quasi-line, one of those neighbors is joined to $H$; call this vertex $w$. 
If two vertices outside $H$ are joined to $H$, then $G$ contains 
the $f$-AT configuration in Figure~\ref{ATpics1}\subref{K2vC4} or
Figure~\ref{ATpics2}\subref{E2vC4}.  Thus, $w$ is unique.
%
%
\bigskip

If $|A|\ge 4$, then $G$ contains the $f$-AT subgraph in
Figure~\ref{ATpics2}\subref{fig:biggie}.  So assume $\card{A}\le 3$.
Suppose $\Delta_B>\card{B}$.  Since $3\ge \card{A}\ge\card{B}\ge 2$ (and
$\Delta_A\le\card{A}$), we have $\card{A}=3$ and $\card{B}=2$.  Now $G$ contains
the $f$-AT subgraph in Figure~\ref{ATpics2}\subref{fig:A3B2Outside} or
Figure~\ref{ATpics2}\subref{fig:A3B3EulerB} (if the two vertices in $B$ have a
common neighbor in $A$).  So we conclude that $\Delta_B\le\card{B}$.

If all of $B$ is low, then $G$ contains the $f$-AT subgraph in
Figure~\ref{ATpics2}\subref{fig:A3B3EulerB}.  So instead $B$ contains some high
vertex $b$.  Since $\Delta_B\le \card{B}$, and $b$ is high, $B\cup N(B)\setminus
H$ contains $\Delta$ vertices.  
Since $G$ contains no $K_\Delta$, 
the set $N(B)\setminus H$ contains some nonneighbor of $w$.
If $\card{B\cup (N(B)\setminus (H\cup\{w\}))}\ge 4$, then $G$ contains the
$f$-AT subgraph in Figure~\ref{ATpics2}\subref{fig:biggie}.  The same is true
if $\card{A\cup (N(A)\setminus (H\cup\{w\}))}\ge 4$.  
Since $G$ is quasi-line, $N(w)$ is contained in $H\cup
N(H)=A\cup B\cup N(A)\cup N(B)$.  This gives $d(w)\le 3+3$, which contradicts
that $\delta(G)\ge \Delta(G)-1\ge 8$.
This contradiction finishes the proof of the lemma.
%
\end{proof}

\subsection{Handling 2-joins}
\label{2joins}
Our goal in this section is to write $G$ as a composition of linear
interval strips, where each strip is complete or complete less an edge (since
this implies that $G$ is very nearly a line graph).  Our main tool is the
following lemma, which we will apply to each interval
2-join in the representation. 

\begin{lem}\label{Irreducible2Join}
Let $G$ be a BK-free graph with $\Delta(G) \ge 9$ and $\omega(G)<\Delta(G)$.
If $(H, A_1, A_2, B_1, B_2)$ is an irreducible canonical interval $2$-join in $G$, then 
\begin{enumerate}
\item[(1)] $B_1 \cap B_2 = \emptyset$; and,
\item[(2)] $\card{A_1}, \card{A_2} \le 3$; and,
\item[(3)] either $H$ is complete, or $H = K_{|H|} - xy$ and $|H|\le 6$, where
$x$ and $y$ are low in $G$.
\end{enumerate}
\end{lem}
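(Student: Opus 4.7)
The plan is to prove each conclusion by contradiction: assuming the negation, I exhibit an induced subgraph of $G$ that is $f$-AT or $f$-KP for $f(v)=d_H(v)-1+\Delta(G)-d_G(v)$, contradicting the BK-free hypothesis. Label the vertices of $H$ in linear order as $v_1,\ldots,v_t$, so $A_1=\{v_1,\ldots,v_{k_1}\}$ and $A_2=\{v_{t-k_2+1},\ldots,v_t\}$. Irreducibility at the left end (when $H$ is incomplete) provides a vertex $w\in V(H)\setminus A_1$ adjacent to some of $A_1$ but not to $v_1$, and symmetrically at the right.  Since $G$ is BK-free, $\delta(G)\ge \Delta-1$, so every vertex is either high or low.

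For part~(1), suppose $b\in B_1\cap B_2$; then $b$ is adjacent to all of $A_1\cup A_2$ and to nothing else in $V(H)$.  If $H$ is complete, then $V(H)\cup\{b\}$ is a clique whose size, combined with $|A_1|+|B_1|\le\Delta-1$, contradicts $\omega(G)<\Delta$.  If $H$ is incomplete, the irreducibility witnesses at the two ends together with $b$ yield an induced subgraph that is the complement of a bipartite graph with $\omega\ge 2$ and suitable side-size, so Lemma~\ref{ATPerfect} makes it $f$-AT.  For part~(2), suppose $|A_1|\ge 4$.  Since $|A_1\cup B_1|\le\omega(G)\le\Delta-1$, we have $|B_1|\le\Delta-5$, hence $d_H(v)\ge 4$ for each $v\in A_1$.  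The irreducibility witness $w$ together with $A_1$ and a non-neighbor of $w$ in $A_1$ produces an induced complement-of-bipartite subgraph of clique number $\ge 4$, to which Lemma~\ref{ATPerfect} applies, giving an $f$-AT configuration and the required contradiction.  The bound on $|A_2|$ follows symmetrically.

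For part~(3), assume $H$ is incomplete.  The missing edges of a linear interval graph form a set closed under $(i,j)\mapsto(i',j')$ with $i'\le i\le j\le j'$, so any missing edge forces $v_1v_t$ to be missing.  A careful case analysis, using irreducibility at both ends and part~(2), shows $H=K_t-v_1v_t$: any second missing edge either makes the $2$-join reducible at the opposite end or produces an induced subgraph on $\{v_1,v_{k_1},v_j,v_t\}$ together with a vertex of $B_1$ that matches an $f$-AT orientation in Figure~\ref{ATpics2}.  Next, $v_1$ and $v_t$ must be low: if $v_1$ is high then $N_G(v_1)$ contains $\Delta$ vertices, the clique $V(H)\setminus\{v_1,v_t\}$ has size $t-2$, and a nonadjacency inside $N_G(v_1)$ forced by $\omega(G)<\Delta$ yields a forbidden $f$-AT subgraph.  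Finally, $|H|\le 6$: any interior vertex $v_i\in V(H)\setminus(A_1\cup A_2)$ has $d_H(v_i)=t-1$ and no external edges, so $d_G(v_i)\ge\Delta-1$ forces $t\ge\Delta$, while the clique $V(H)\setminus\{v_1\}$ of size $t-1$ forces $t\le\Delta$; hence $t=\Delta$ and the degree equations give $|B_1|=|B_2|=1$.  The induced subgraph on $\{v_1,v_2,v_{t-1},v_t\}\cup B_1\cup B_2$ together with one interior vertex matches a forbidden configuration from Figure~\ref{ATpics2} (such as a $\join{K_s}{C_4}$- or $\join{K_3}{P_4}$-type piece with the right low/high pattern), so no interior vertex exists and $|H|=|A_1|+|A_2|\le 6$ by part~(2).

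The main obstacle is the final step of part~(3): exhibiting a specific $f$-AT induced subgraph in the $t=\Delta$ case matching the catalog in Figures~\ref{ATpics1}--\ref{ATpics2}.  The casework must handle the possible adjacency of the unique vertices of $B_1$ and $B_2$, and must select an interior vertex and external vertices producing a subgraph that precisely matches one of the enumerated $f$-AT configurations.  A secondary difficulty is the absorption argument in part~(1): one must verify that adding $b$ into the strip produces a valid canonical linear-interval $2$-join before concluding that irreducibility is violated.
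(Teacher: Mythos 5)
Your outline captures the right strategy (exhibit induced $f$-AT/$f$-KP subgraphs to contradict BK-freeness), but there are three substantive gaps that would block a correct proof.

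First, you attempt to prove the three conclusions in the order (1), (2), (3), but the paper's actual dependency structure is the reverse: the key technical step is establishing $R-L=1$ (that $A_1$ and $A_2$ together fill all of $V(H)$), which is needed \emph{before} the degree-counting argument for $B_1\cap B_2=\emptyset$ can go through. In the paper this is Claim 5, proved via Subclaims 5a--5f, which chase the right endpoints $r(v_{L+1}), r(v_{L+2}), \ldots$ and the left endpoint $l(v_{L+3})$ with specific $f$-AT configurations at each step ($\join{K_4}{E_2}$, $\join{K_4}{2K_2}$, $\join{K_3}{P_4}$, $\join{K_2}{P_4}$ with appropriate low/high patterns). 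You bypass this entirely: your part~(3) argument considers an ``interior vertex'' and derives $t=\Delta$, but then hand-waves ``matches a forbidden configuration from Figure~\ref{ATpics2}'' for the final contradiction, and your earlier step ``a careful case analysis\ldots shows $H=K_t-v_1v_t$'' is asserted without the content of Subclaims 5a--5f. This is where the real work of the lemma lives; sketching past it is not a proof.

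Second, your argument for part~(1) is not sound as stated. If $H$ is complete, $V(H)\cup\{b\}$ is a clique only once you know $V(H)=A_1\cup A_2$ (paper's Claim 2), which you never prove; and even then $|V(H)|+1$ need not exceed $\Delta-1$, so this alone gives no contradiction. If $H$ is incomplete, you appeal to Lemma~\ref{ATPerfect}, but you never identify the bipartition of the complement, nor verify the required inequalities $f(v)\ge\omega$ on one side and $f(v)\ge|B|$ on the other. The paper's actual argument (Subclaim 6a) is a sharp degree comparison between $w\in B_1\cap B_2$ and vertices of $H$ using $R-L=1$, showing all of $V(H)$ is low, $|B_1|=|B_2|$, $|B_1\setminus B_2|=|B_2\setminus B_1|=1$, and $H$ complete; then 6b and 6c finish with explicit $f$-AT subgraphs. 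Your version is not a substitute.

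Third, your proof of part~(2) similarly fails to handle the case $H$ complete: your ``irreducibility witness $w$'' exists only when $H$ is incomplete, yet you apply the same argument unconditionally. The paper's Claim 4 splits on whether $H$ is complete, using $\join{K_4}{E_2}$ or $\join{K_4}{2K_2}$ configurations in each case, with further subcases depending on whether $v_1$ is low or high. You should also note that you never prove the preliminary structural facts the paper's proof relies on (Claims 0--3: $H$ connected and all $A_i$, $B_i$ nonempty; the relation $r(v_L)=r(v_1)+1$ when $H$ is incomplete, which forces $v_1, v_t$ low; $R-L=1$ when $H$ is complete or complete minus an edge; and $B_1\not\subseteq B_2$, $B_2\not\subseteq B_1$). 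Without these, neither your argument nor the paper's can start.
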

\begin{proof}
The most interesting of the three conclusions in the lemma is (3).
If $H$ is complete, for every choice of $H$, then $G$ is a line graph, which we
handle in Section~\ref{line-graphs}.  So (3) proves that $G$ is quite close to
being a line graph.

Let $(H, A_1, A_2, B_1, B_2)$ be an irreducible canonical interval $2$-join in $G$. 
Note that $G$ has no simplicial vertices, since $\delta(G)\ge \Delta(G)-1$ and
$\omega(G)<\Delta(G)$.
Label the vertices of $H$ left-to-right as $v_1, \ldots, v_t$.  Say $A_1 = \set{v_1, \ldots, v_L}$ and $A_2 = \set{v_R, \ldots, v_t}$. 
For $v \in V(H)$, let $r(v) \DefinedAs \max\setbs{i \in \irange{t}}{v \adj
v_i}$ and $l(v) \DefinedAs \min\setbs{i \in \irange{t}}{v \adj v_i}$.  These
are well-defined since $\card{H} \geq 2$ and $H$ is connected by the following claim.

\claim{0} {$H$ is connected and each of $A_1, A_2, B_1, B_2$ is nonempty.}
Otherwise $G$ contains a simplicial vertex.

\claim{1} {If $H$ is incomplete, then $r(v_L) = r(v_1) + 1$ and $l(v_R) =
l(v_t) - 1$.  In particular, $v_1$ and $v_t$ are low and also $\card{A_1}\ge 2$ and
$\card{A_2} \geq 2$.} Suppose instead that $H$ is incomplete and $r(v_L) \neq
r(v_1) + 1$.  By definition, $N_H(v_1) \subseteq N_H(v_L)$ and $v_1$ and $v_L$ have
the same neighbors
in $G\setminus H$.  If $r(v_L) = r(v_1)$, then $N_H(A_1)\setminus A_1 =
N_H(v_1)\setminus A_1$, so $H$ is reducible, which is a contradiction.  Thus
$r(v_L) \ge r(v_1)+1$.  If $r(v_L) \ge r(v_1) + 2$, then $d(v_L) - d(v_1) \geq
2$, which is impossible, since $\delta(G)\ge \Delta(G)-1$.  
So $r(v_L) = r(v_1) + 1$, as desired.  Similarly, $l(v_R) = l(v_t) - 1$. 

\claim{2} {If $H$ is complete or complete less an edge, then $R - L = 1$.} 
Assume, for a contradiction, that $R-L\ne 1$, so $V(H) \neq A_1 \cup A_2$. 
First suppose that
$H$ is complete.   Now any $v \in V(H) \setminus A_1 \cup A_2$ is simplicial in
$G$, which is a contradiction.  So suppose instead that $H$ is complete less an
edge, and choose $v \in V(H) \setminus(A_1 \cup A_2)$.  Now $N[v]$ is
complete less an edge; since $G$ has no $K_\Delta$, $v$ must
be low.  By Claim 1, $v_1$ and $v_t$ are also low, so $G$ contains a copy of
$K_4-e$ in which one vertex in both triangles is high and the other three
vertices are low.  This subgraph is $f$-AT, as shown in
Figure~\ref{ATpics1}\subref{K4-e}, which is a contradiction.

\claim{3} {$B_1 \not \subseteq B_2$, $B_2 \not \subseteq B_1$.}
If not, then by symmetry we can assume $B_2 \subseteq B_1$.  First, suppose $H$
is complete or complete less an edge.  By Claim 2, $R-L=1$.
If $H$ is complete, then the vertices in $A_2$ are simplicial, which is
impossible.  If $H$ is complete less an edge, then for a high vertex $v$ in
$A_2$ (which exists by Claim~1), $N[v]$ induces
$K_{\Delta+1} - e$; this contains $K_\Delta$, which is a contradiction.

So $H$ is neither complete nor complete less an edge; in particular $v_1\nonadj
v_t$.  If $v_1\adj v_{t-1}$, then $v_{t-1}$ is high, since $d(v_t)<d(v_{t-1})$.
 This implies $v_t\adj v_2$; now $H$ is complete less an edge, which is a
contradiction.  So $v_1\nonadj v_{t-1}$ and, by symmetry, $v_2\nonadj v_t$.
%
If $|B_2| \ge 2$, then since $|A_1|\ge 2$ and $|A_2| \ge 2$ by Claim~1, 
then consider the subgraph induced by 
$v_1,v_L,V_R,v_t$, and two vertices of $B_2$.  Since $B_2\subseteq B_1$, this
induced subgraph is either Figure~\ref{2JoinPics}\subref{fig:K22K2low} or
Figure~\ref{2JoinPics}\subref{fig:K2P4low}, which is
a contradiction, since $G$ is $BK$-free.

So we must have $|B_2|=1$.  Let $\set{w}=B_2$.
Now $v_t$ is in a $K_{\Delta-1}$ in $H$, say with vertices $v_q, v_{q+1},
\ldots, v_t$.  In particular, $w$ is not joined to $H$, so $R-L \ne 1$.  If
$|A_2| \ge 4$, then $\{v_t,v_{t-1},v_{t-2},v_{t-3},v_q,w\}$ induces
$\join{K_4}{E_2}$, where the $K_4$ has a low vertex, $v_t$.  As shown in
Figure~\ref{ATpics1}\subref{K4vE2}, this is $f$-AT, which is a contradiction.
So $|A_2| \le 3$.  

First, suppose $v_{R-1}$ is low. Now $l(v_{R-1}) = q - 1$.
Since $|A_2| \le 3$, the subgraph induced by $\{v_t, v_{R-1}, v_{R-2}, v_{R-3},
v_{R-4}, v_{q-1}\}$ is $\join{K_4}{E_2}$, with a low vertex in the $E_2$.
This is $f$-AT by Figure~\ref{ATpics1}\subref{K4vE2}, which is a contradiction.
So assume instead that $v_R$ is high.  Now $l(v_{R-1})=q-2$, so 
the subgraph induced by
$\{v_t,v_{t-1},v_{R-1},v_{R-2},v_{R-3},v_{R-4},v_{q-1},v_{q-2}\}$ is
$\join{K_4}{B}$, where $B$ is not almost complete.  This subgraph is $f$-AT, as
shown in Figures~\ref{ATpics1}\subref{K4v2E2c}--\ref{ATpics1}\subref{K2vC4},
which is a contradiction.


\claim{4} {$\card{A_1}, \card{A_2} \leq 3$.}  Suppose otherwise, by symmetry, that
$\card{A_1} \geq 4$.  First, suppose $H$ is complete.  By Claim~2, $V(H) = A_1
\cup A_2$. If $v_1$ is low, then for any $w_1 \in B_1 \setminus B_2$ the vertex
set $\{v_1, \ldots, v_4, v_t, w_1\}$ induces a $\join{K_4}{E_2}$, which 
contradicts Figures~\ref{ATpics1}\subref{K4vE2}.
Hence $v_1$ is high. If $\card{A_2} \geq 2$ and $\card{B_1 \setminus B_2} \geq 2$, 
then for any $w_1, w_2 \in B_1 \setminus B_2$, the vertex set $\{v_1, \ldots,
v_4, v_{t-1}, v_t, w_1, w_2\}$ induces $\join{K_4}{2K_2}$, which contradicts
Figure~\ref{ATpics1}\subref{K4v2E2c}. 
Hence either $\card{A_2} = 1$ or $\card{B_1 \setminus B_2} = 1$.  Suppose
$\card{A_2} = 1$.  Since $A_1 \cup B_1$ induces a clique and $\card{A_1 \cup
B_1} = d(v_1)$, $v_1$ must be low, which is impossible.    Hence, we have
$\card{B_1 \setminus B_2} = 1$, so $\card{B_1 \cap B_2} = \card{B_1} - 1$.
Hence, $V(H) \cup (B_1 \cap B_2)$ induces a clique of size $\card{A_1} + \card{A_2} +
\card{B_1} - 1 = d(v_1) = \Delta$, which is a contradiction.

So $H$ must be incomplete.  By Claim~1, $v_1$ is low.  Now, as above,
for any $w_1 \in B_1 \setminus B_2$, the vertex set $\{v_1, \ldots, v_4,
v_{L+1}, w_1\}$ induces a $\join{K_4}{E_2}$ that contradicts
Figure~\ref{ATpics1}\subref{K4vE2}.  Hence, $\card{A_1} \leq 3$ and,
by symmetry, $\card{A_2} \leq 3$.

\claim{5} {$R - L = 1$.}  Suppose otherwise that $R - L \geq 2$.  By Claim~2,
$H$ is incomplete.  Now by Claim~1, $r(v_L) = r(v_1) + 1$, $l(v_R) = l(v_t) -
1$, $v_1$ and $v_t$ are low, and $\card{A_1}\ge 2$ and $\card{A_2} \geq 2$.
Now we will find an $f$-AT subgraph induced by some vertices of $H$.  To this
end, we describe $N(v_{L+1}),N(v_{L+2}),N(v_{L+3}),N(v_{L+4})$.

\subclaim{5a} {$L + \Delta - 2 \leq r(v_{L+1}) \leq L + \Delta - 1$.}
Since $v_{L+1}$ has exactly $L$ neighbors to the left, $r(v_{L+1}) \leq
L + 1 + \Delta - L = \Delta + 1 \leq L + \Delta - 1$.  If $v_{L+1}$ is high,
this computation is exact, so $r(v_{L+1}) = \Delta + 1 \geq L + \Delta - 2$. 
So suppose instead that $v_{L+1}$ is low. If $L=3$, then for some $w_1 \in B_1$
the vertex set $\set{v_1, v_2, v_3, v_4, w_1}$ induces a $\join{K_3}{E_2}$
that contradicts Figure\ref{ATpics1}\subref{K3vE2}.  Hence $L=2$ and
$r(v_{L+1}) = L + 1 + \Delta - 1 - L = \Delta \geq L + \Delta - 2$.

\subclaim{5b} {$L + \Delta - 2 \leq r(v_{L+2}) \leq L + \Delta$.}  By
Subclaim~5a, $r(v_{L+2}) \geq L + \Delta - 2$.  Since $H$ contains no
$\Delta$-clique, $v_{L+2}$ has at least $2$ neighbors to the left if it is high
and at least $1$ neighbor to the left if it is low.  Thus $r(v_{L+2}) \leq L + 2
+ \Delta - 2 = L + \Delta$.

\subclaim{5c} {If $v_{L+4}$ is high, then $l(v_{L+4}) \leq L$.} 
Suppose otherwise.  Recall that $v_{L+1}\adj v_{L+4}$, since $d(v_{L+1})\ge
\Delta-1\ge 8$ and $|A_1|\le 3$.  Now $v_{L+4}$ has exactly 3 neighbors to the
left, so $r(v_{L+4}) = L + \Delta + 1$.  Consider the subgraph induced on
$\{v_{L+1}, v_{L+2}, v_{L+4}, v_{L+5}, v_{L+6}, v_{L+7}$, $v_{L+9}$,
$v_{L+10}\}$.  By Subclaims~5a and~5b, this subgraph 
contradicts Figure~\ref{ATpics1}\subref{K4v2E2c} or
Figure~\ref{ATpics1}\subref{K3vP4}.

\subclaim{5d} {$l(v_{L+3}) \leq L$.} 
Suppose otherwise.  Since $v_{L+1}\adj v_{L+3}$, vertex
$v_{L+3}$ has exactly $2$ neighbors to the left, so
$r(v_{L+3}) \geq L + \Delta$.  By Subclaim~5c, $v_{L+4}$ is low. By Subclaim~5a,
$L + \Delta - 2 \leq r(v_{L+1}) \leq L + \Delta - 1$. Therefore $\set{v_{L+1},
v_{L+3}, v_{L+4}, v_{L+5}, v_{L+6}, v_{L+\Delta}}$ induces a $\join{K_4}{E_2}$
that contradicts Figure~\ref{ATpics1}\subref{K4vE2}.

\subclaim{5e} {$r(v_1) \geq L + 2$.} 
By Subclaim~5d, $r(v_L) \geq L + 3$, so Claim~1 implies that $r(v_1) \geq L + 2$.

\subclaim{5f} {Claim~5 is true.} 
If $r(v_{r(v_1)-1})=r(v_1)+1$, then $v_{r(v_1)-1}$ is low, so
$\{v_1,v_{L+1},v_{L+2},v_{L+3},v_{r(v_1)-1},v_{r(v_1)+1}\}$ 
induces $\join{K_4}{E_2}$, 
which contradicts Figure~\ref{ATpics1}\subref{K4vE2}.  So assume 
$r(v_{r(v_1)-1})\ne r(v_1)+1$. 

Consider the subgraph $Q$ induced on $\{v_1, v_L, v_{r(v_1) - 1}, v_{r(v_1)}$,
$v_{r(v_1) + 1}, v_{r(v_{r(v_1) - 1})}\}$; these vertices must be distinct.  
Both $v_{r(v_1) - 1}$ and $v_{r(v_1)}$ are dominating vertices in $Q$. We show
that $\set{v_1,v_L,v_{r(v_1) + 1}, v_{r(v_{r(v_1) - 1})}}$ induces a $P_4$,
so $Q$ is Figure \ref{ATpics1}\subref{fig:K2P4low}, which is a contradiction. 
By definition, $v_1 \adj v_L$, $v_1 \nonadj v_{r(v_1) + 1}$, and $v_1\nonadj
v_{r(v_{r(v_1) - 1})}$. By Claim~2, $v_L \adj v_{r(v_1) + 1}$. 
By Subclaim~5e, $r(v_1) \ge L + 2$, so $r(v_1) - 1 \ge L+1$.  Since $|B_1| > 0$
by Claim 0, this means $r(v_{r(v_1) - 1}) - (r(v_1) - 1) \ge r(v_L) - L$ and
hence $r(v_{r(v_1) - 1}) \ge r(v_L) - L + (r(v_1) - 1) \ge r(v_L) + 1$. 
Therefore $v_L \nonadj v_{r(v_{r(v_1) - 1})}$, so $\set{v_1,v_L,v_{r(v_1) + 1},
v_{r(v_{r(v_1) - 1})}}$ induces a $P_4$ as desired.

\claim{6} {$B_1 \cap B_2 = \emptyset$.} Suppose otherwise that we have
$w \in B_1 \cap B_2$. 

\subclaim{6a} {Each $v \in V(H)$ is low, $\card{B_1} = \card{B_2}$,
$\card{B_1 \setminus B_2} = \card{B_2 \setminus B_1} = 1$, $d(v) = \card{A_1} +
\card{A_2} + \card{B_1} - 1$ for each $v \in V(H)$ and $H$ is complete.} By
Claim~5, we have $d(v) \leq \card{A_1} + \card{A_2} + \card{B_1} - 1$ for each
$v \in A_1$ and $d(v) \leq \card{A_1} + \card{A_2} + \card{B_2} - 1$ for each
$v \in A_2$.  Since $B_1 \not \subseteq B_2$ and $B_2 \not \subseteq B_1$, we
have $d(w) \geq \max\set{\card{B_1}, \card{B_2}} +
\card{A_1} + \card{A_2}$.  So $d(w) \geq d(v) + 1$ for every $v \in V(H)$.  This
implies that each $v \in V(H)$ is low, $\card{B_1} = \card{B_2}$, $\card{B_1
\setminus B_2} = \card{B_2 \setminus B_1} = 1$, and $d(v) = \card{A_1} +
\card{A_2} + \card{B_1} - 1$ for each $v \in V(H)$. Hence, $H$ is complete.  

\subclaim{6b} {$\card{B_1 \cap B_2} \leq 3$.} Suppose otherwise that
$\card{B_1 \cap B_2} \geq 4$.  Pick $w_1 \in B_1 \setminus B_2$, $w_2 \in B_2
\setminus B_1$ and $z_1, z_2, z_3, z_4 \in B_1 \cap B_2$.  Since $v_1\nonadj
w_2$ and $v_t\nonadj w_1$, the set $\set{z_1, z_2, z_3, z_4, w_1, w_2, v_1,
v_t}$ induces an $f$-AT subgraph shown in Figure~\ref{ATpics1}\subref{K4v2E2c},
\ref{ATpics1}\subref{K3vP4}, or Figure~\ref{ATpics1}\subref{K2vC4}; each case
yields a contradiction.  Hence $\card{B_1 \cap B_2} \leq 3$.

\subclaim{6c} {Claim~6 is true.}  By Subclaim~6a and Subclaim~6b we
have $3 \geq \card{B_1 \cap B_2} = \card{B_1} - 1$, so $\card{B_1} =
\card{B_2} \leq 4$.  If $\card{A_1} \leq 2$ and $\card{A_2} \leq 2$, then $\Delta - 1
= d(v_1) \leq 3 + \card{B_1} \leq 7$, which is a contradiction.  Hence, by
symmetry, we assume that $\card{A_1} \geq 3$.  But now for any $w_1 \in B_1
\setminus B_2$, the set $\set{v_1, v_2, v_3, v_t, w_1}$ induces a
$\join{K_3}{E_2}$ contradicting Figure~\ref{ATpics1}\subref{K3vE2}.

\claim{7} {Either $H$ is complete, or $H = K_{|H|} - xy$ where $x$ and $y$ are
low in $G$ and $|H|\le 6$.}  Suppose $H$ is incomplete.  By Claim~5, $R - L =
1$. So, by Claim~1 $r(v_L) = r(v_1) + 1$ and $l(v_R) = l(v_t) - 1$.  Since
$v_1$ is not simplicial, $r(v_1) \geq L + 1 = R$.  Hence $l(v_R) = 1$, so
$l(v_t) = 2$.  Similarly, $r(v_1) = t - 1$.  So, $H$ is $K_t$ less an edge and
$v_1$ and $v_t$ are low (by Claim~2).  Finally, by Claims 5 and 4,
$|H|=|A_1|+|A_2|\le 3+3=6$.
\end{proof}

\begin{figure}[ht]
\subfloat[][\label{fig:K22K2low}~~EE = 8, EO = 9\\$\join{K_2}{2K_2}$ with
two lows]{\makebox[.50\textwidth]{
\begin{tikzpicture}[scale = 10]
\tikzstyle{VertexStyle} = []
\tikzstyle{EdgeStyle} = []
\tikzstyle{labeledStyle}=[shape = circle, minimum size = 6pt, inner sep = 1.2pt, draw]
\tikzstyle{unlabeledStyle}=[shape = circle, minimum size = 6pt, inner sep = 1.2pt, draw, fill]
\tikzstyle{72aaff53-67b1-533a-c226-a2038d6dbcc1}=[post]
\tikzstyle{c4c35e5d-2ab6-3243-0be6-5bbe0efaf8b4}=[pre]
\Vertex[style = labeledStyle, x = 0.500, y = 0.650, L = \small {$2$}]{v0}
\Vertex[style = labeledStyle, x = 0.600, y = 0.650, L = \small {$3$}]{v1}
\Vertex[style = labeledStyle, x = 0.400, y = 0.800, L = \small {$2$}]{v2}
\Vertex[style = labeledStyle, x = 0.400, y = 0.900, L = \small {$1$}]{v3}
\Vertex[style = labeledStyle, x = 0.700, y = 0.800, L = \small {$2$}]{v4}
\Vertex[style = labeledStyle, x = 0.700, y = 0.900, L = \small {$1$}]{v5}
\Edge[style = 72aaff53-67b1-533a-c226-a2038d6dbcc1, label = \tiny {}, labelstyle={auto=right, fill=none}](v2)(v0)
\Edge[style = c4c35e5d-2ab6-3243-0be6-5bbe0efaf8b4, label = \tiny {}, labelstyle={auto=right, fill=none}](v3)(v0)
\Edge[style = c4c35e5d-2ab6-3243-0be6-5bbe0efaf8b4, label = \tiny {}, labelstyle={auto=right, fill=none}](v2)(v1)
\Edge[style = 72aaff53-67b1-533a-c226-a2038d6dbcc1, label = \tiny {}, labelstyle={auto=right, fill=none}](v3)(v1)
\Edge[style = 72aaff53-67b1-533a-c226-a2038d6dbcc1, label = \tiny {}, labelstyle={auto=right, fill=none}](v0)(v4)
\Edge[style = c4c35e5d-2ab6-3243-0be6-5bbe0efaf8b4, label = \tiny {}, labelstyle={auto=right, fill=none}](v1)(v4)
\Edge[style = 72aaff53-67b1-533a-c226-a2038d6dbcc1, label = \tiny {}, labelstyle={auto=right, fill=none}](v0)(v5)
\Edge[style = c4c35e5d-2ab6-3243-0be6-5bbe0efaf8b4, label = \tiny {}, labelstyle={auto=right, fill=none}](v1)(v5)
\Edge[style = c4c35e5d-2ab6-3243-0be6-5bbe0efaf8b4, label = \tiny {}, labelstyle={auto=right, fill=none}](v4)(v5)
\Edge[style = 72aaff53-67b1-533a-c226-a2038d6dbcc1, label = \tiny {}, labelstyle={auto=right, fill=none}](v1)(v0)
\Edge[style = 72aaff53-67b1-533a-c226-a2038d6dbcc1, label = \tiny {}, labelstyle={auto=right, fill=none}](v3)(v2)
\end{tikzpicture}}}
\subfloat[][\label{fig:K2P4low}~~EE =  14, EO = 15\\$\join{K_2}{P_4}$ with one low end]{
\makebox[.50\textwidth]{
\begin{tikzpicture}[scale = 10]
\tikzstyle{VertexStyle} = []
\tikzstyle{EdgeStyle} = []
\tikzstyle{labeledStyle}=[shape = circle, minimum size = 6pt, inner sep = 1.2pt, draw]
\tikzstyle{unlabeledStyle}=[shape = circle, minimum size = 6pt, inner sep = 1.2pt, draw, fill]
\tikzstyle{9387c6a6-9fa2-ac3b-0480-d4ec634f10ba}=[post]
\tikzstyle{f911ea52-03eb-3466-3236-2637d51acb1c}=[pre]
\Vertex[style = labeledStyle, x = 0.500, y = 0.650, L = \small {$2$}]{v0}
\Vertex[style = labeledStyle, x = 0.600, y = 0.650, L = \small {$3$}]{v1}
\Vertex[style = labeledStyle, x = 0.400, y = 0.800, L = \small {$2$}]{v2}
\Vertex[style = labeledStyle, x = 0.400, y = 0.900, L = \small {$1$}]{v3}
\Vertex[style = labeledStyle, x = 0.700, y = 0.800, L = \small {$2$}]{v4}
\Vertex[style = labeledStyle, x = 0.700, y = 0.900, L = \small {$2$}]{v5}
\Edge[style = 9387c6a6-9fa2-ac3b-0480-d4ec634f10ba, label = \tiny {}, labelstyle={auto=right, fill=none}](v0)(v3)
\Edge[style = 9387c6a6-9fa2-ac3b-0480-d4ec634f10ba, label = \tiny {}, labelstyle={auto=right, fill=none}](v0)(v4)
\Edge[style = 9387c6a6-9fa2-ac3b-0480-d4ec634f10ba, label = \tiny {}, labelstyle={auto=right, fill=none}](v0)(v5)
\Edge[style = 9387c6a6-9fa2-ac3b-0480-d4ec634f10ba, label = \tiny {}, labelstyle={auto=right, fill=none}](v1)(v0)
\Edge[style = f911ea52-03eb-3466-3236-2637d51acb1c, label = \tiny {}, labelstyle={auto=right, fill=none}](v1)(v2)
\Edge[style = 9387c6a6-9fa2-ac3b-0480-d4ec634f10ba, label = \tiny {}, labelstyle={auto=right, fill=none}](v2)(v0)
\Edge[style = 9387c6a6-9fa2-ac3b-0480-d4ec634f10ba, label = \tiny {}, labelstyle={auto=right, fill=none}](v3)(v1)
\Edge[style = 9387c6a6-9fa2-ac3b-0480-d4ec634f10ba, label = \tiny {}, labelstyle={auto=right, fill=none}](v3)(v2)
\Edge[style = 9387c6a6-9fa2-ac3b-0480-d4ec634f10ba, label = \tiny {}, labelstyle={auto=right, fill=none}](v4)(v1)
\Edge[style = f911ea52-03eb-3466-3236-2637d51acb1c, label = \tiny {}, labelstyle={auto=right, fill=none}](v5)(v1)
\Edge[style = 9387c6a6-9fa2-ac3b-0480-d4ec634f10ba, label = \tiny {}, labelstyle={auto=right, fill=none}](v5)(v4)
\Edge[style = 9387c6a6-9fa2-ac3b-0480-d4ec634f10ba, label = \tiny {}, labelstyle={auto=right, fill=none}](v4)(v2)
\end{tikzpicture}}}
\caption{Subgraphs forbidden by Alon--Tarsi orientations, used in
Lemma~\ref{Irreducible2Join}.\label{2JoinPics}}
\end{figure}
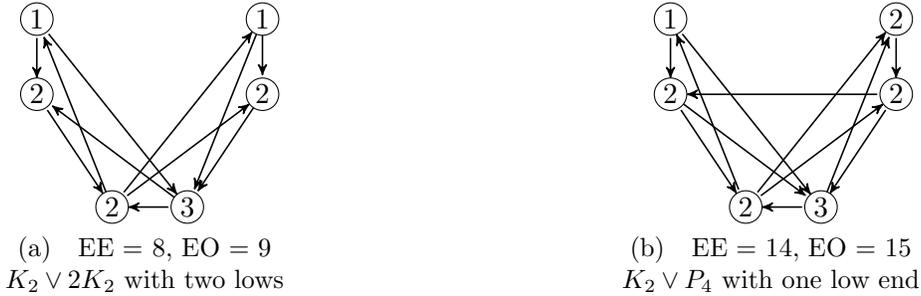

Recall our goal in this section: to write $G$ as a composition of linear
interval strips, where each strip is complete or complete less an edge (since
this implies that $G$ is very nearly a line graph).  Our main tool in this
endeavor is Lemma~\ref{Irreducible2Join}, which we will apply to each interval
2-join in the representation.  To this end, we would like that every interval
2-join is canonical and irreducible.  Of course, trivial 2-joins are fine also,
since their strips must be complete.  So in Lemma~\ref{TrivialOrCanonical}, we
show that every interval 2-join is either trivial or canonical.  Finally, in
the proof of Lemma~\ref{QuasiLineContainedInLine} we choose a composition
representation with the maximum number of strips; thus, every canonical
interval 2-join is irreducible.

\begin{lem}\label{TrivialOrCanonical}
Let $G$ be a BK-free graph with $\Delta(G) \ge 9$ and $\omega(G)<\Delta(G)$.
Each interval $2$-join in $G$ is either trivial $(A_1=A_2)$ or canonical
$(A_1\cap A_2=\emptyset)$.
\end{lem}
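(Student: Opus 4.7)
\emph{Plan.} Assume for contradiction that an interval $2$-join $(H,A_1,A_2,B_1,B_2)$ is neither trivial nor canonical, so $A_1\cap A_2\ne\emptyset$ and $A_1\ne A_2$. Order $V(H)$ left to right as $v_1,\ldots,v_t$ and write $A_1=\{v_1,\ldots,v_L\}$, $A_2=\{v_R,\ldots,v_t\}$; then $R\le L$, which in particular forces $V(H)=A_1\cup A_2$ (there is no gap in the middle). The linchpin observation is that any $v\in A_1\cap A_2$ satisfies $v\adj v_1$ and $v\adj v_t$, so the consecutive property of linear interval graphs (if $v\adj v_i$ and $v\adj v_j$ with $i<j$, then $\{v_i,\ldots,v_j\}$ is a clique) makes $v$ adjacent to every other vertex of $V(H)$; combined with $v$ being joined to $B_1\cup B_2$, this gives $d_G(v)=(t-1)+|B_1\cup B_2|$.

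First I would knock out the degenerate subcases $R=1$ and $L=t$. If $R=1$ then $A_2=V(H)$ is a clique, so $A_2\cup B_2$ is a clique of size at most $\omega(G)\le\Delta-1$. But $A_1\ne A_2$ forces $L<t$, and then $v_t\in A_2\setminus A_1$ has all its $G$-neighbors in $(V(H)\setminus v_t)\cup B_2$, giving $d_G(v_t)=(t-1)+|B_2|=|A_2\cup B_2|-1\le\Delta-2$, contradicting $\delta(G)\ge\Delta-1$. The case $L=t$ is handled symmetrically.

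In the remaining case $1<R\le L<t$ we have $v_1\in A_1\setminus A_2$ and $v_t\in A_2\setminus A_1$. Using $d_G(v_1)\ge\Delta-1$ with the clique bound $|A_1\cup B_1|\le\Delta-1$, I would deduce $r(v_1)\ge L+1$, so $\{v_1,\ldots,v_{L+1}\}$ is a clique by consecutivity. Because $v_{L+1}\in A_2$ is adjacent to $B_1\cap B_2$ while every $v_i$ with $i\le L$ lies in $A_1$ and is also adjacent to $B_1\cap B_2$, the set $\{v_1,\ldots,v_{L+1}\}\cup(B_1\cap B_2)$ is a clique; comparing its size to $\omega(G)\le\Delta-1$ and using $d_G(v_1)\ge\Delta-1$ then yields $|B_1\setminus B_2|\ge 1$, and symmetrically $|B_2\setminus B_1|\ge 1$. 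Picking $w_1\in B_1\setminus B_2$ and $w_2\in B_2\setminus B_1$, and assuming $R<L$, the six vertices $\{v_R,v_L,v_1,v_t,w_1,w_2\}$ induce $\join{K_2}{X}$ with $X\in\{2K_2,P_4,C_4\}$, the three possibilities corresponding to whether $v_1\adj v_t$ and whether $w_1\adj w_2$; each case supplies a forbidden $f$-AT configuration (one of $\join{K_2}{2K_2}$, $\join{K_2}{P_4}$, $\join{K_2}{C_4}$ from Figures~\ref{ATpics1}\subref{K2vC4} and~\ref{2JoinPics}), contradicting BK-freeness.

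The main obstacle will be the boundary subcase $R=L$, where $A_1\cap A_2=\{v_L\}$ supplies only one universal vertex and the analogous subgraph on $\{v_L,v_1,v_t,w_1,w_2\}$ is only $\join{K_1}{X}$, which is not directly in the $f$-AT library. I expect to repair this by enriching the subgraph with $v_{L-1}\in A_1\setminus A_2$ and $v_{L+1}\in A_2\setminus A_1$; the same degree-and-clique bookkeeping applied to $v_{L-1}$ forces $r(v_{L-1})\ge L+1$, so $\{v_{L-1},v_L,v_{L+1}\}$ is a triangle, and one can then locate a forbidden 7-vertex configuration of the form $\join{K_3}{P_4}$ or $\join{K_4}{E_2}$ from Figure~\ref{ATpics1}, possibly with additional subcase analysis according to which of $v_1,v_t,w_1,w_2,v_{L\pm 1}$ are high versus low.
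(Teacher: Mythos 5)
Your proposal takes a genuinely different route from the paper, and it has real gaps. The paper's proof is a short reduction: let $C = A_1\cap A_2$. Since each $v\in C$ is adjacent to both $v_1$ (via $A_1$) and $v_t$ (via $A_2$), the linear-interval property makes $C$ universal in $H$, so $(H\setminus C,\, A_1\setminus C,\, A_2\setminus C,\, C\cup B_1,\, C\cup B_2)$ is a \emph{canonical} interval $2$-join. Reducing it to an irreducible canonical $2$-join $(H',A_1',A_2',B_1',B_2')$, we still have $C\subseteq B_1'\cap B_2'$ (because $C$ remains joined to $H'$), and Lemma~\ref{Irreducible2Join}(1) says $B_1'\cap B_2'=\emptyset$. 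Hence $C=\emptyset$. This two-line argument outsources all the hard structural work to Lemma~\ref{Irreducible2Join}; you instead try to rebuild that structural work from scratch.

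The gaps in your direct approach are concrete. First, in the step where you conclude $|B_1\setminus B_2|\ge 1$: comparing the clique $\{v_1,\ldots,v_{L+1}\}\cup(B_1\cap B_2)$, of size $(L+1)+|B_1\cap B_2|$, with $\omega(G)\le\Delta-1$ and $d_G(v_1)=r(v_1)-1+|B_1|\ge\Delta-1$ does not by itself force $B_1\not\subseteq B_2$; the conclusion is reachable (e.g.\ by noting that if $B_1\subseteq B_2$ then \emph{every} $v_i$ with $i\le r(v_1)$ is adjacent to all of $B_1$, so $\{v_1,\ldots,v_{r(v_1)}\}\cup B_1$ is a clique of size $d_G(v_1)+1\ge\Delta$), but your single comparison as written is not sufficient. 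Second, and more seriously, in the $R<L$ case you invoke $\join{K_2}{2K_2}$ and $\join{K_2}{P_4}$ from Figure~\ref{2JoinPics}, but those entries in the library carry explicit low-vertex requirements (``two lows'' for $\join{K_2}{2K_2}$ — one in each $K_2$ — and ``one low end'' for $\join{K_2}{P_4}$). In your configuration the candidates for the required low vertices are among $v_1, v_t, w_1, w_2$, and nothing in your argument establishes that they are low; indeed, without canonicity and irreducibility you cannot import the paper's Claim~1 of Lemma~\ref{Irreducible2Join} (which is where ``$v_1$ and $v_t$ are low'' comes from). Only the $\join{K_2}{C_4}$ figure works with all vertices high. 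Third, you flag the $R=L$ subcase yourself as unresolved, offering only a plan; as written, that part of the proof does not exist. In sum: the skeleton of a direct proof is there, but it needs the missing low-vertex bookkeeping and the $R=L$ analysis filled in — and a much shorter path is available via the canonicalization reduction plus Lemma~\ref{Irreducible2Join}(1), which is what the paper does.
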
  
\begin{proof}
Let $(H, A_1, A_2, B_1, B_2)$ be an interval $2$-join in $G$. Suppose that
$H$ is nontrivial ($A_1 \ne A_2$) and let $C \DefinedAs A_1 \cap A_2$. 
Now $(H \setminus C, A_1 \setminus C, A_2 \setminus C, C \cup B_1, C \cup B_2)$
is a canonical interval $2$-join.  We reduce this $2$-join until we get an
irreducible canonical interval $2$-join $(H', A_1', A_2', B_1', B_2')$ with $H'
\unlhd H \setminus C$.  Since $C$ is joined to $H\setminus C$, it is also
joined to $H'$.  Hence $C \subseteq B_1' \cap B_2'$. 
Now Lemma \ref{Irreducible2Join} implies $B_1' \cap B_2' = \emptyset$, 
so $A_1 \cap A_2 = C = \emptyset$. Thus, $H$ is canonical.
\end{proof}

\begin{lem}
\label{QuasiLineContainedInLine}
If $G$ is a quasi-line BK-free graph with $\Delta(G) \ge 9$ and
$\omega(G)<\Delta(G)$, then there is a line graph $G'$ with $G \subseteq
G'$ such that $\Delta(G') = \Delta(G)$ and $\omega(G') < \Delta(G')$.
\end{lem}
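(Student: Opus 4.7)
The plan is to combine Theorem~\ref{QuasilineStructure} with the two reducibility lemmas already proved so as to pin down $G$ as a composition of very restricted strips, and then to add one carefully chosen edge inside each strip that is not already complete. In detail, I would invoke Theorem~\ref{QuasilineStructure}: since $G$ is connected quasi-line, it is a circular interval graph, contains a non-linear homogeneous pair of cliques, or is a composition of linear interval strips. Lemmas~\ref{NotCircularIntervalIfBKCritical} and~\ref{NoNonLinear}, together with $\Delta(G)\ge 9$, rule out the first two possibilities, so $G$ is a composition of linear interval strips. Fix such a representation with the maximum possible number of strips. By Lemma~\ref{TrivialOrCanonical}, every interval $2$-join in this representation is trivial or canonical, and by maximality each canonical $2$-join is also irreducible.

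Next I would apply Lemma~\ref{Irreducible2Join} to each non-trivial (hence canonical, irreducible) strip $(H,A_1,A_2,B_1,B_2)$ to get $B_1\cap B_2=\emptyset$, $|A_1|,|A_2|\le 3$, $V(H)=A_1\sqcup A_2$, and either $H$ complete or $H=K_{|H|}-v_1v_t$ with $v_1,v_t$ low in $G$ and $|H|\le 6$; trivial strips are automatically complete. Form $G'$ from $G$ by adding the missing edge $v_1v_t$ in each strip of the latter type. Since strips are vertex-disjoint, each affected low vertex gains exactly one neighbor, so its degree rises from $\Delta(G)-1$ to $\Delta(G)$; hence $\Delta(G')=\Delta(G)$. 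For $\omega(G')<\Delta(G')$, note that any clique of $G'$ that is not already a clique of $G$ must contain some added edge $v_1v_t$ and so lies in $\{v_1,v_t\}\cup(N_G(v_1)\cap N_G(v_t))$; inside the strip this common neighborhood is $V(H)\setminus\{v_1,v_t\}$, of size $|H|-2\le 4$, and outside the strip it equals $B_1\cap B_2=\emptyset$. So any new clique has size at most $|H|\le 6<\Delta(G)$, while every clique of $G$ has size less than $\Delta(G)$ by hypothesis.

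It remains to check that $G'$ is a line graph. By construction every strip of the composition is complete in $G'$. For each non-trivial complete strip $(H_e,X_e,Y_e)$, canonicity gives $V(H_e)=X_e\sqcup Y_e$; I would refine the composition by splitting the corresponding edge $e$ of the underlying directed multigraph through a fresh intermediate node $z_e$, replacing $(H_e,X_e,Y_e)$ by the two trivial strips $(G'[X_e],X_e,X_e)$ and $(G'[Y_e],Y_e,Y_e)$. The new clique $C_{z_e}=X_e\cup Y_e$ precisely restores the adjacencies of $H_e$, while the previous cliques $C_u$ and $C_w$ at the two endpoints of $e$ are unchanged; so the refinement leaves $G'$ unchanged. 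After the refinement every strip $(H_{e'},X_{e'},Y_{e'})$ satisfies $V(H_{e'})=X_{e'}=Y_{e'}$, and by the characterization stated right after the definition of composition of strips, $G'$ is the line graph of the resulting multigraph.

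The main obstacle is really just the faithfulness of this final strip refinement: one must verify that splitting each complete non-trivial strip through a new intermediate node reproduces exactly $G'$, introducing no spurious adjacencies and losing none. The degree and clique-number bookkeeping reduces directly to the quantitative conclusions of Lemma~\ref{Irreducible2Join}, and the opening reduction to a composition of strips is a clean use of Theorem~\ref{QuasilineStructure} together with Lemmas~\ref{NotCircularIntervalIfBKCritical} and~\ref{NoNonLinear}.
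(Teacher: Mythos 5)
Your proposal is correct and follows essentially the same route as the paper: invoke the structure theorem together with Lemmas~\ref{NotCircularIntervalIfBKCritical} and~\ref{NoNonLinear}, take a representation with the maximum number of strips, apply Lemmas~\ref{TrivialOrCanonical} and~\ref{Irreducible2Join}, and add the missing edge $v_1v_t$ in each incomplete strip to form $G'$, with the same degree and clique-number bookkeeping via $B_1\cap B_2=\emptyset$ and $|H|\le 6$. The one point worth flagging is that $V(H)=A_1\sqcup A_2$ is not literally among the stated conclusions of Lemma~\ref{Irreducible2Join} (it is Claim~5 inside its proof, or follows because an interior vertex of a complete strip would be simplicial); otherwise your explicit strip-refinement argument for why $G'$ is a line graph simply spells out a step the paper leaves implicit.
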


\begin{proof}
By Lemma \ref{NotCircularIntervalIfBKCritical}, $G$ is not a circular interval
graph.  By Lemma \ref{NoNonLinear}, $G$ has no non-linear homogeneous pair
of cliques.  So, by Theorem \ref{QuasilineStructure}, $G$ is a composition of
linear interval strips.  Choose such a composition representation of $G$ using
the maximum number of strips.  By Lemma~\ref{TrivialOrCanonical}, every interval
2-join is trivial or canonical.  

Let $(H, A_1, A_2)$ be a strip in the composition.  
Let $B_1 \DefinedAs N_{G\setminus H}(A_1)$ and $B_2 \DefinedAs N_{G\setminus
H}(A_2)$.  Now $(H, A_1, A_2, B_1, B_2)$ is an interval $2$-join.  
If $A_1=A_2$, then $H$ is complete.  So suppose $A_1 \ne A_2$.  
Now $H$ is canonical, by Lemma \ref{TrivialOrCanonical}. If $H$ is reducible,
then by symmetry we can assume that $N_H(A_1) \setminus A_1 = N_H(v_1) \setminus
A_1$. But now replacing the strip $(H, A_1, A_2)$ with the two strips
$(G[A_1], A_1, A_1)$ and $(H\setminus A_1, N_H(A_1)\setminus A_1, A_2)$ gives a
composition representation of $G$ using more strips, which is a contradiction.  
Hence $H$ is irreducible.  Now
by Lemma \ref{Irreducible2Join}, $H$ is complete or $K_t - xy$ where $x$ and
$y$ are low in $G$ and $t \le 6$.
Thus, $G$ is a composition of strips, each of which is either complete or $K_t
- xy$, where $x$ and $y$ are low in $G$ and $t \le 6$.  

Note that each vertex can play the role of $x$ or $y$ in at most one incomplete
strip.  So, we can add a matching containing $xy$ for each strip of
the form $K_t - xy$ without increasing the maximum degree. Let $G'$ be the
resulting graph.  Now we show that adding this matching does not create a
$K_{\Delta(G)}$.  For each strip of the form $K_t - xy$, exactly one
maximal clique in $G'$ contains both $x$ and $y$ (since $B_1\cap
B_2=\emptyset$) and this clique has at most 6 vertices.  Hence $\omega(G') \le
\max\{6,\omega(G)\}<\Delta(G)$, as desired.
\end{proof}

Lemma~\ref{QuasiLineContainedInLine}
completes our proof that if the list-coloring version of
the Borodin--Kostochka conjecture (or its paintability analogue) is true for
line graphs, then it is true for quasi-line graphs.  If $G$ is a
quasi-line counterexample, then the $G'$ guaranteed by the
lemma is a line-graph counterexample, since $\chil(G')\ge\chil(G)$; 
similarly, $\chi_{OL}(G')\ge \chi_{OL}(G)$.

\section{Line Graphs}
\label{line-graphs}
In this section we consider line graphs.  
The general idea is to show that if $G$ is a line graph of $H$, then some
subgraph of $G$ is a line graph of a bipartite graph $B$ such that
each edge of $B$ has many of its adjacent edges also in $B$.  We then use a
result of Borodin, Kostochka, and Woodall~\cite{BKW} to show that the line
graph of $B$ is $f$-AT or $f$-KP.  This is the first place in our proof that
relies heavily on subgraphs being kernel-perfect.  In particular, the key result
from~\cite{BKW}, shown below as Theorem~\ref{BKWmain}, has no analogue for
Alon--Tarsi orientations.  (One example of this is that the line graph of
$K_{3,3}$ has no Alon--Tarsi orientation with maximum outdegree at most 2.)
Our main result in this section is the following theorem.  

\begin{thm}
\label{mainLineGraphs}
If $G$ is a BK-free line graph, then $\Delta(G)<69$.
Thus, if $G$ is a line graph with $\Delta(G)\ge 69$, 
then $\chip(G)\le \max\{\omega(G),\Delta(G)-1\}$.  
\end{thm}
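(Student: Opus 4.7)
My plan is to prove the first statement by contradiction and obtain the second by a minimal-counterexample argument. Assume $G=L(H)$ is BK-free with $\Delta(G)\ge 69$; the goal is to exhibit an induced subgraph of $G$ that is $f$-KP, which is forbidden by the definition of BK-free. The second statement then follows because any minimum-order counterexample to $\chip(G)\le\max\{\omega(G),\Delta(G)-1\}$ with $\Delta(G)\ge 69$ is BK-free by minimality, so the first statement rules it out.

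First I would translate BK-freeness into structural constraints on $H$. From $\omega(G)<\Delta(G)$ and $\delta(G)\ge\Delta(G)-1$, we get $\Delta(H)\le\Delta(G)-1$ and $\Delta(G)+1\le d_H(u)+d_H(v)\le\Delta(G)+2$ for every $uv\in E(H)$. Calling a vertex of $H$ \emph{big} if $d_H(v)>(\Delta(G)+2)/2$ and \emph{small} otherwise, the edge-sum constraint forces big vertices to be mutually nonadjacent and forces every small-small edge to have both endpoints of degree exactly $(\Delta(G)+1)/2$ (so these are very rare). Removing that bounded set of small-small edges yields a bipartite subgraph $B\subseteq H$ with parts big and small, and $L(B)$ is an induced subgraph of $G$.

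The second step is to invoke the Borodin--Kostochka--Woodall theorem from~\cite{BKW}, which (since $B$ is bipartite) guarantees that $L(B)$ admits a kernel-perfect orientation with $d^+(e)\le\max\{d_B(u),d_B(v)\}-1$ for every $e=uv$. The target list-size function from the BK-free definition is $f(e)=d_{L(B)}(e)-1+\Delta(G)-d_G(e)$, and a direct computation gives $f(e)\ge\max\{d_B(u),d_B(v)\}$ --- so $L(B)$ is genuinely $f$-KP --- exactly when $\min\{d_B(u),d_B(v)\}\ge 3$ (a weaker bound $\ge 2$ suffices for low edges $e$). The task therefore reduces to pruning $B$ to a nonempty subgraph in which every surviving edge satisfies this minimum-degree condition.

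This pruning is the main obstacle and the source of the constant $69$. Since each small vertex contributes at most $\lfloor(\Delta(G)+1)/2\rfloor$ to the big side, and each edge-degree is pinned to $\Delta(G)+1$ or $\Delta(G)+2$, I would track how many small neighbors a big vertex can lose before its $B$-degree falls below $3$, and dually for small vertices. A quantitative accounting --- bounding the total ``degree budget'' that iterative pruning can consume against the lower bound on $d_H(u)+d_H(v)$ --- should show that when $\Delta(G)\ge 69$ at least one high edge of $G$ survives in the pruned bipartite subgraph, yielding the desired $f$-KP induced subgraph. The reason line graphs force this large constant (whereas $\Delta(G)\ge 9$ sufficed for the quasi-line reductions) is that this pruning has no cheap analogue via Alon--Tarsi orientations, so the full power of the kernel method is required here.
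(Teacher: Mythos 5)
Your proposal takes a genuinely different route from the paper, and unfortunately it leaves the central quantitative step unexecuted in a way that I don't believe can be easily patched.

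The paper's argument proceeds in layers: first it bounds multiplicities ($\mu(H)\le 3$, with no triple edge on a triangle) via Alon--Tarsi reducible configurations; then it shows $H$ is $6$-degenerate using a max-cut bipartition (each vertex keeps at least half its degree across the cut) combined with Theorem~\ref{BKWmain}; finally, with $\mad(H)<12$ in hand, it runs a ten-round discharging argument on $H$, invoking a refined BKW-based lemma (Lemma~\ref{BKWalternater}) at each round to produce a charge donor. The constant $69$ falls out of that discharging computation. Your proposal instead partitions $V(H)$ by a degree threshold into ``big'' and ``small,'' discards small--small edges, and then hopes to prune the resulting bipartite graph to its ``$3$-core.'' This is an interesting alternative decomposition (it exploits the fact that big vertices are pairwise nonadjacent since $d_H(u)+d_H(v)\le\Delta(G)+2$), but it is not the same as the paper's max-cut.

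The gap: nothing in your sketch shows the pruning terminates nonempty. Small vertices $u$ can have $d_H(u)$ as small as $2$ (all that BK-freeness guarantees is $d_H(u)\ge\Delta(G)+1-\Delta(H)\ge 2$), and those vertices fail your minimum-degree-$\ge 3$ condition immediately and must be pruned. You then need to argue that this cascade leaves something behind, but there is no obvious obstruction to the pruning eating the whole graph, and your ``quantitative accounting'' is a placeholder rather than an argument. This is precisely the difficulty the paper's discharging argument is designed to handle: discharging never removes vertices, so it avoids the termination problem entirely and instead derives a numerical contradiction from the charge budget. A second omission: you never deal with multiplicities. $H$ may be a multigraph, and Theorem~\ref{BKWmain} applied to a multigraph $B$ requires $\min\{d_B(u),d_B(v)\}\ge\mu(uv)+2$, not $\ge 3$; the paper devotes Lemmas~\ref{mu-at-most-3-AT} and~\ref{mu-3-KP} to establishing $\mu(H)\le 3$ and disposing of the borderline cases, and without that step your degree threshold is off. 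Finally, your claim that a small--small edge forces both endpoints to have degree exactly $(\Delta(G)+1)/2$ is only correct when $\Delta(G)$ is odd; the even case allows two values and should be handled. None of these are fatal in spirit, but together they mean the proposal is a proof \emph{direction} rather than a proof, and the hardest part (getting from the setup to an actual bound like $69$) has not been addressed.
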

\noindent
In other words, we prove the Borodin--Kostochka conjecture, strengthened to
online list-coloring, for the class of line graphs with maximum degree at least
69.  (When combined with the previous section, this proves the same result for
the larger class of quasi-line graphs with maximum degree at least 69.)
Our proof relies mainly on the kernel method.  This technique came to
prominence when Galvin~\cite{Galvin} used it to prove the List Coloring
Conjecture for line graphs of bipartite graphs.  More precisely, he showed
that if $G$ is the line graph of a bipartite graph $H$, then $G$ is
$\Delta(H)$-edge-choosable.  A few years later Borodin, Kostochka, and
Woodall~\cite{BKW} sharpened Galvin's result.  They proved the following. 
(They only stated the result for list-coloring, but the same proof gives the
result for kernel-perfection.)
\begin{thm}[\cite{BKW}]
\label{BKWmain}
If $G$ is the line graph of a bipartite graph
$B$, then $G$ is $f$-KP, where $f(uv)=\max\{d_B(u),d_B(v)\}$ for every
edge $uv$ in $B$.  Thus, $G$ is $f$-paintable.
\end{thm}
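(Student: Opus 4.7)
The proof follows the kernel-method framework introduced by Galvin~\cite{Galvin} for edge-choosability of bipartite graphs, refined to yield a degree-sensitive bound on the out-degrees. The plan is to exhibit an explicit kernel-perfect orientation of $L(B)$ in which $d^+(uv) < \max(d_B(u), d_B(v))$ for every edge $uv$ of $B$, and then invoke Lemma~\ref{BBS-lemma} (whose conclusion already delivers $f$-paintability by the online extension of the kernel method).

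First, by K\"onig's edge-coloring theorem, I would fix a proper edge coloring $c \colon E(B) \to [\Delta(B)]$, and let $X, Y$ denote the parts of $B$. Orient $L(B)$ so that, for adjacent edges $e, f \in E(B)$ sharing a vertex $w$, $e \to f$ exactly when either $w \in X$ and $c(e) > c(f)$, or $w \in Y$ and $c(e) < c(f)$. Call this orientation $D$. To verify $D$ is kernel-perfect, I would use a stable-matching argument. Given any induced subgraph $S \subseteq L(B)$, let $B_S$ be the subgraph of $B$ whose edges are $V(S)$, and equip $B_S$ with the preference rules: at each $x \in X$ prefer incident edges with larger color, and at each $y \in Y$ prefer incident edges with smaller color. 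By Gale--Shapley a stable matching $M$ of $B_S$ exists, and since any matching in $B_S$ is independent in $L(B_S) = S$, it remains only to see that $M$ is a kernel. For each unmatched $e \in E(B_S) \setminus M$, stability produces some $f \in M$ sharing a vertex $w$ with $e$ such that $f$ is preferred over $e$ at $w$, and this is precisely the condition $e \to f$ in $D$; hence $M$ is a kernel.

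The quantitative heart of the proof is bounding $d^+_D(e)$. Direct counting gives $d^+_D(e) = |\{f \ni u : c(f) < c(e)\}| + |\{f \ni v : c(f) > c(e)\}|$ for $e = uv$ with $u \in X, v \in Y$. Galvin's original analysis bounds this by $\Delta(B) - 1$, but we need the sharper $\max(d_B(u), d_B(v)) - 1$. To close this gap I would choose the proper edge coloring (or equivalent endpoint orderings) carefully, so that at each vertex $w$ the colors used are compressed near the bottom of $[\Delta(B)]$ and the rank-based contributions at the two endpoints of $e$ balance against the higher of the two endpoint degrees. With such a coloring the sum is at most $\max(d_B(u), d_B(v)) - 1$, so $f(uv) > d^+_D(uv)$ and Lemma~\ref{BBS-lemma} applies, yielding $f$-KP and hence $f$-paintability.

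The main obstacle is the refined out-degree bound in the last step: constructing a ``degree-balanced'' proper edge coloring (or equivalent system of endpoint orderings) under which Galvin's orientation meets the sharper target $\max(d_B(u), d_B(v)) - 1$, rather than the loose bound $\Delta(B) - 1$ that any K\"onig coloring supplies. This refinement of Galvin's argument is the principal technical content of~\cite{BKW}, and requires either an inductive construction of the coloring or a direct exchange/alternating-path argument on the endpoint orderings.
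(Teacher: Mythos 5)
The paper does not actually prove this theorem; it cites~\cite{BKW} (noting that the same proof also yields kernel-perfection and, via~\cite{Schauz-Paint-Correct}, paintability), so there is no in-paper argument to compare against. Your sketch correctly sets up the framework used there---a Galvin-type orientation of $L(B)$, kernel-perfection via Gale--Shapley stable matchings, then Lemma~\ref{BBS-lemma}---and correctly isolates the crux: a K\"onig coloring only gives $d^+(uv)\le\Delta(B)-1$, and the theorem demands the sharper bound $\max\{d_B(u),d_B(v)\}-1$. But the proposed route to that sharper bound does not work. If the colors incident to each vertex $w$ formed an initial segment $\{1,\dots,d_B(w)\}$, a short computation does give $d^+(uv)=d_B(v)-1\le\max\{d_B(u),d_B(v)\}-1$; however, such a ``compressed'' coloring is generally impossible to realize at all vertices simultaneously (for instance, any vertex of degree $2$ with two degree-$1$ neighbours would force both incident edges to receive color $1$). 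So the step you label as the ``main obstacle'' is indeed the entire content of the theorem, and the vague appeal to choosing the coloring ``carefully'' does not discharge it; the actual inductive/exchange argument of~\cite{BKW} is needed and is not supplied.

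There is also a sign inconsistency you should repair. You orient $e\to f$ at $w\in X$ when $c(e)>c(f)$, while declaring that $X$-vertices prefer larger colors; then ``$f$ is preferred over $e$ at $w$'' means $c(f)>c(e)$, which is the arc $f\to e$, not $e\to f$. As written, stability gives each unmatched $e$ an \emph{in}-neighbor in $M$, so $M$ is not a kernel of $D$. Reversing either the orientation rule or the preference convention fixes this (and correspondingly changes your out-degree formula to $|\{f\ni u: c(f)>c(e)\}|+|\{g\ni v: c(g)<c(e)\}|$, which yields the same $\Delta(B)-1$ bound), but the two conventions must be kept consistent.
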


This strengthening allowed for a surprisingly wide range of applications.  
One beautiful consequence of Theorem~\ref{BKWmain} is that
for every constant $k$ there exists a constant $\Delta_k$ such
that if $\mad(G)<k$ and $\Delta(G)\ge \Delta_k$, then $\chil'(G)=\chi'(G)$.
The main idea of the proof is to find as a subgraph of $G$ a certain type of
bipartite graph $B$ such that any coloring of $E(G)\setminus E(B)$ can be
extended to $E(B)$ by Theorem~\ref{BKWmain}.
Recently, Woodall~\cite{Woodall} gave a simpler proof of this result.  In that
paper he made explicit that it suffices to let $\Delta_k=\frac{k^2}2$.  Since all of
these proofs use the kernel method,
they extend directly to online list-coloring, as observed by
Schauz~\cite{Schauz-Paint-Correct}.  

Galvin's proof is well-known and it has
been widely reproduced (for example, in~\cite{PFTB} and~\cite{Diestel}).  The
proofs for the extensions by Borodin, Kostochka, and Woodall~\cite{BKW} and
Schauz~\cite{Schauz-Paint-Correct} are similar, so we do not reproduce
them here.  However, the result for bounded maximum average degree is much less
well-known.  (Further, we need one extra wrinkle, since the proofs in
\cite{BKW} and \cite{Woodall} give an upper bound on $\Delta(H)$.  We must
translate this to an upper bound on $\Delta(G)$, but this final step is
relatively easy.)  We particularly like Woodall's presentation, so
we follow that below, in Theorem~\ref{helperLineGraphs}.  

The proof of our main result in this
section has a simple outline.  Let $G$ be the line graph of some graph $H$.
In Lemmas~\ref{mu-at-most-3-AT}--\ref{6DegenerateHelper}, we show that if $G$ is BK-free,
then $H$ is 6-degenerate.  In particular, $\mad(H)<12$.  Next, in
Lemma~\ref{BKWalternater} and Theorem~\ref{helperLineGraphs}, we apply 
Theorem~\ref{BKWmain} to show that if $G$ is BK-free and
has $\mad(H)<12$, then $\Delta(G)\le 68$.  This completes the proof for line
graphs.  

Now we recall how this section fits into the larger context of the paper.
In the previous section, 
we showed that if there exists a BK-free quasi-line graph $G$ with $\Delta(G)\ge
9$, $\omega(G)<\Delta(G)$, and $\chi(G) > \max\{\omega(G),\Delta(G)-1\}$, then there
exists such a $G$ that is a line graph.  In fact, the proof constructs the line
graph with the same maximum degree as the original.  Thus, our result that
$\chip(G)\le\max\{\omega(G),\Delta(G)-1\}$ for every line graph $G$ with
$\Delta(G)\ge 69$ immediately extends to prove the same bound for every
quasi-line graph $G$ with $\Delta(G)\ge 69$.  Combining this result with our
previous work (Theorem~5.6\ in~\cite{BK-claw-free}), we get that $\chil(G)\le
\max\{\omega(G),\Delta(G)-1\}$ for every claw-free graph $G$ with $\Delta(G)\ge
69$.

In the rest of this section, we prove Theorem~\ref{mainLineGraphs}.
We begin with two lemmas showing that certain graphs are $d_1$-AT or $d_1$-KP.
The hypothesis $\omega(G)<\Delta(G)$ arises naturally
from our interest in the Borodin--Kostochka Conjecture.
When $G$ is a line graph of $H$, the edges incident to any
common endpoint in $H$ form a clique in $G$, so $\Delta(H) \le
\omega(G)<\Delta(G)$.

\begin{lem}
\label{mu-at-most-3-AT}
If $G$ is BK-free with $\omega(G)<\Delta(G)$ and $G$ is the line graph of some
graph $H$, then $\mu(H)\le 3$.  Further, no edge of multiplicity 3 in $H$
appears on a triangle.
\end{lem}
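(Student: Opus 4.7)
The plan is to exhibit, under either hypothesis violation, a forbidden induced $K_4\vee E_2$ in $G$ (Figure~\ref{ATpics1}\subref{K4vE2}), which is unconditionally forbidden in any BK-free graph since its $K_4$-vertices have $d_{H'}=5$ (so $f_{H'}\ge 4$) and its $E_2$-vertices have $d_{H'}=4$ (so $f_{H'}\ge 3$), dominating the labels in the figure; otherwise, in the ``collapsed'' sub-cases I will produce a clique in $G$ of size at least $\Delta(G)$, contradicting $\omega(G)<\Delta(G)$. Two standing inequalities drive both parts: $\delta(G)\ge\Delta(G)-1$ (from BK-freeness), and $d_H(x)\le\omega(G)<\Delta(G)$ for each $x\in V(H)$ (since the edges at $x$ form a clique in $G$). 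The first, applied to an $H$-edge $uv$ of multiplicity $m$, becomes $d_H(u)+d_H(v)\ge\Delta(G)+m$, since that edge has degree $d_H(u)+d_H(v)-m-1$ in $G$.

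For $\mu(H)\le 3$: suppose $\mu_H(uv)=m\ge 4$ with parallel copies $e_1,\dots,e_m$. If there exist $w_u\in N_H(u)\setminus\{v\}$ and $w_v\in N_H(v)\setminus\{u\}$ with $w_u\ne w_v$, then $\{e_1,e_2,e_3,e_4,uw_u,vw_v\}$ induces $K_4\vee E_2$: the parallels form $K_4$, and $uw_u,vw_v$ are each adjacent to all of them (sharing $u$ or $v$) but share no endpoint with each other. Otherwise, either one of the sets $N_H(u)\setminus\{v\}$, $N_H(v)\setminus\{u\}$ is empty, or they coincide as a common singleton $\{w\}$. In the empty case (say the former), $d_H(u)=m$ forces $d_H(v)\ge\Delta(G)$, contradicting $d_H(v)<\Delta(G)$. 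In the singleton case, writing $a=\mu_H(uw)$ and $b=\mu_H(vw)$, the $m+a+b$ edges among $\{u,v,w\}$ are pairwise adjacent in $G$, so $\omega(G)\ge m+a+b$ gives $a+b\le\Delta(G)-1-m$, while $d_H(u)+d_H(v)=2m+a+b\ge\Delta(G)+m$ gives $a+b\ge\Delta(G)-m$, a contradiction.

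For the triangle statement: suppose $\mu_H(uv)=3$ with parallels $e_1,e_2,e_3$ and common neighbor $w$ of $u$ and $v$, and set $a=\mu_H(uw)\ge 1$, $b=\mu_H(vw)\ge 1$. If $u$ has a neighbor $y\notin\{v,w\}$, then $\{e_1,e_2,e_3,uw,vw,uy\}$ induces $K_4\vee E_2$, with $K_4=\{e_1,e_2,e_3,uw\}$ (all sharing $u$) and $E_2=\{vw,uy\}$ (no shared endpoint since $y\notin\{v,w\}$); the case of a neighbor of $v$ outside $\{u,w\}$ is symmetric. Otherwise $N_H(u)\subseteq\{v,w\}$ and $N_H(v)\subseteq\{u,w\}$, so $d_H(u)=3+a$ and $d_H(v)=3+b$; the $3+a+b$ edges among $\{u,v,w\}$ form a clique in $G$, yielding $a+b\le\Delta(G)-4$, while the degree-sum bound gives $a+b\ge\Delta(G)-3$---impossible.

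The main obstacle is simply organizing the case split cleanly so every degenerate configuration is ruled out. In each such sub-case the neighborhoods of $u$ and $v$ collapse onto $\{v,w\}$ and $\{u,w\}$, and the two fundamental bounds $\omega(G)<\Delta(G)$ and $\delta(G)\ge\Delta(G)-1$ pinch the total multiplicity $a+b$ between values differing by exactly one, leaving no room.
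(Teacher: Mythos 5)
Your central claim---that $\join{K_4}{E_2}$ in Figure~\ref{ATpics1}\subref{K4vE2} is ``unconditionally forbidden in any BK-free graph'' because $f_{H'}\ge 4$ on the $K_4$ and $f_{H'}\ge 3$ on the $E_2$ ``dominate the labels''---misreads the Alon--Tarsi condition. The figure labels are out-degrees, and the condition is strict: $f(v) > d_D^+(v)$. The $K_4$-vertex with out-degree $4$ therefore needs $f\ge 5$, which forces that vertex to be low in $G$. In fact $\join{K_4}{E_2}=K_6-e$ is not $(4,4,4,4,3,3)$-choosable (give both $E_2$-vertices list $\{1,2,3\}$ and each $K_4$-vertex $\{1,2,3,4\}$; at most two colors survive for the $K_4$), so it cannot be $f$-AT when every $K_4$-vertex is high. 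Your $K_4$'s---four parallel edges in the first part, $\{e_1,e_2,e_3,uw\}$ in the second---are not guaranteed to contain a low vertex, so the generic case of both parts is unproved.

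The paper's proof handles exactly this by splitting on whether the vertex $v$ corresponding to a parallel copy is high or low. When $v$ is low, $\join{K_4}{E_2}$ with a low $K_4$-vertex suffices (Figure~\ref{ATpics1}\subref{K4vE2}); when $v$ is high, the bound $\omega(G)<\Delta(G)$ sharpens to $\omega(B)\le|B|-2$, and since $B$ is covered by two cliques (hence $\alpha(B)\le 2$), $B$ contains two disjoint non-edges, allowing the larger all-high configurations in Figure~\ref{ATpics1}\subref{K4v2E2a}--\subref{K2vC4}. You need an analogous split, producing either a low $K_4$-vertex or a second independent non-edge in the common neighborhood. Your ``collapsed'' sub-cases (empty or coinciding singleton residual neighborhoods, and the triangle pinch) are correct and pleasantly elementary---they replace the paper's implicit reasoning with direct clique-counting between $\delta(G)\ge\Delta(G)-1$ and $\omega(G)<\Delta(G)$---but they only cover the degenerate situations, not the main one.
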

\begin{proof}
Suppose, to the contrary, that $H$ has some edge $e$ of multiplicity at least
4; let $v\in V(G)$ be a vertex corresponding to $e$.  First suppose that
$d_G(v)=\Delta(G)$.  Now $G[\{v\}\cup N(v)]=\join{K_4}{B}$, for some graph $B$,
since $e$ has multiplicity at least 4.  Since $\omega(G)<\Delta(G)$, we get that
$\omega(B)\le \card{B}-2$.  Since $G$ is a line graph, $B$ has independence
number 2, so $B$ contains two disjoint pairs of non-adjacent vertices.  Thus,
$\join{K_4}{B}$ is $d_1$-AT, as shown in
Figure~\ref{ATpics1}\subref{K4v2E2c}--\subref{K2vC4}.
Now suppose instead that $d_G(v)=\Delta(G)-1$.  The argument is essentially the
same; however, now we only get that $\omega(B)\le \card{B}-1$, so $B$ is
incomplete.  Now $\join{K_4}B$ is $f$-AT, as shown in
Figure~\ref{ATpics1}\subref{K4vE2}, since $v$ is low.  This completes the proof
of the first statement.

Now we prove the second statement.  Suppose, to the contrary, that $H$ has an
edge $e$ of multiplicity 3 on a triangle.  Let $x_1, x_2, x_3$ be the vertices
of the triangle, with $x_1$ and $x_2$ the endpoints of $e$.  Let $v_1, v_2, v_3$
be the vertices corresponding to edges with endpoints $x_1$ and $x_2$.  
Let $v_4$ and $v_5$ be vertices corresponding to edges $x_1x_3$ and $x_2x_3$.  
Similar
to above, $G[\{v\}\cup N(v)]=\join{K_3}{B}$, for some graph $B$.  
If $d_G(v)=\Delta(G)-1$, then $\omega(G)\le \card{B}-1$.  So some edge of $H$
incident to $x_1$ or $x_2$ has an endpoint outside of $\{x_1,x_2,x_3\}$.  By
symmetry, say it is incident $x_1$; let $v_6$ be the corresponding vertex of
$G$.  
Now $v_1,\ldots,v_6$ induce in $G$ a subgraph that is $f$-AT, as
shown in Figure~\ref{ATpics1}\subref{K4vE2}, since $v_1$ is low.

Assume instead that $d_G(v)=\Delta(G)$, so $\omega(G)\le \card{B}-2$.
Recall that $\join{K_3}{P_4}$ is $d_1$-AT, as shown in
Figure~\ref{ATpics1}\subref{K3vP4}.  Suppose that an edge incident to $x_1$ has
an endpoint outside $\{x_1,x_2,x_3\}$ and also that an edge incident to $x_2$
has an endpoint outside $\{x_1,x_2,x_3\}$.  If these endpoints are distinct,
then $G$ has a copy of $\join{K_3}{P_4}$, which is $d_1$-AT.  If these endpoints
are identical, then $G$ has a copy of $\join{K_2}{C_4}$, which is $d_1$-AT, as
shown in Figure~\ref{ATpics1}\subref{K2vC4}.  So we conclude that either $x_1$
or $x_2$ has no incident edges with endpoints outside of $\{x_1,x_2,x_3\}$; by
symmetry, assume it is $x_2$.  
Now we can view $\join{K_3}B$ as $\join{K_4}{(B-v_4)}$, since $v_4$ dominates $B$.
Since $\omega(B)\le |B|-2$, we conclude that $B-v_4$ contains two disjoint pairs of
nonadjacent vertices.  Thus, $\join{K_4}{(B-v_4)}$ is $d_1$-AT, as shown in
Figure~\ref{ATpics1}\subref{K4v2E2c}.
\end{proof}

Before proving our next lemma, we need a bit more information about kernel-perfect
orientations.
We can easily show that if $D$ is a kernel-perfect digraph, then every clique
of $D$ is oriented transitively (possibly with some bidirected edges);
otherwise $D$ would have some cyclically oriented 3-cycle, which has no kernel. 
Further, every directed odd cycle must have a chord.
In general, these condition are not sufficient to imply that $D$ is
kernel-perfect.  However Borodin, Kostochka, and Woodall~\cite{BKW-kp}
showed that if the underlying undirected graph $G$ of $D$ is a line graph, then
these conditions are indeed sufficient.

\begin{thm}[\cite{BKW-kp}]
\label{KP-iff-line}
Let $H$ be a line graph of a multigraph.  An orientation $D$ of $H$ is
kernel-perfect if and only if every clique of $H$ is transitively oriented
(possibly with some bidirected edges) and every directed odd cycle of $D$ has a
chord (also possibly bidirected).
\end{thm}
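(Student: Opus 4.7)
For the forward direction, standard observations suffice: a cyclically oriented triangle has no kernel (any single vertex from the triangle fails to dominate the vertex two steps back along the cycle, and no larger independent set exists in a triangle), and a chordless directed odd cycle has no kernel (a kernel would force kernel- and non-kernel-vertices to alternate around the cycle, contradicting odd length). Since both configurations appear as induced subdigraphs of $D$ exactly when they appear in $D$, kernel-perfectness of $D$ forces transitive clique orientations and a chord in every directed odd cycle.

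For the reverse direction, I would proceed by induction on $|V(H)|$ to show that $D$ has a kernel. This suffices because both hypotheses are inherited by induced subdigraphs: an induced subgraph of $L(G)$ is $L(G')$, where $G'$ is the submultigraph obtained by keeping only the edges corresponding to retained vertices; and cliques and induced directed cycles in the subdigraph remain cliques and induced directed cycles in $D$, so transitivity and the chord condition carry over. Hence producing one kernel in $D$ suffices, and applying the inductive hypothesis inside any induced subdigraph yields full kernel-perfectness.

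To construct a kernel, I would exploit the Krausz clique decomposition of $H = L(G)$: the edges of $H$ partition into cliques $\{C_x\}_{x \in V(G)}$, one per vertex of $G$, and each vertex of $H$ lies in at most two such cliques. Transitivity of $D$ on each $C_x$ makes $C_x$ a preorder (a linear order modulo bidirected equivalence) with a nonempty set of sinks. The plan is a greedy procedure: repeatedly locate a vertex $u$ that is a sink in every clique containing it, place $u$ in the kernel, delete $u$ together with all of its in-neighbors, and recurse. The resulting set is automatically independent (no two chosen vertices can share a clique, since a chosen vertex is a sink in every clique containing it) and dominating (each deleted in-neighbor was dominated by the sink that was just chosen).

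The main obstacle is showing that such a globally sink $u$ always exists. Failure produces a sequence $v_1,\ldots,v_k,v_1$ in which each consecutive pair lies in a common clique and $v_i \to v_{i+1}$ via the sink relation there; since each vertex of $H$ lies in at most two cliques, the cliques associated with consecutive steps must alternate between the two cliques containing the shared vertex, and a parity argument then forces the cycle to have odd length. The odd-cycle-chord hypothesis supplies a chord, which either contradicts the transitive orientation of the clique containing it or strictly shortens the obstruction cycle; iterating this reduction resolves the conflict and lets the greedy construction continue. Handling bidirected edges (where ``the'' sink of a clique is no longer unique) requires a small amount of extra bookkeeping to choose sinks consistently across cliques, but introduces no new difficulty.
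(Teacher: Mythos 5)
The paper cites this result from Borodin, Kostochka, and Woodall and does not reprove it, so there is no internal argument to compare against; the proposal has to be judged on its own. Your forward direction is sound, and the hereditary observations justifying the induction are correct. The reverse direction, however, breaks at precisely the step you flag as the main obstacle: the claim that failure to find a vertex which is a sink in \emph{both} of its Krausz cliques forces an \emph{odd} directed obstruction cycle is false. Take $H = C_4 = L(C_4)$ with the cyclic orientation $v_1\to v_2\to v_3\to v_4\to v_1$. The four Krausz cliques are the four edges, each trivially transitively oriented, and $D$ contains no directed odd cycle at all, so the hypotheses of the theorem hold (and indeed $D$ is kernel-perfect, with kernel $\{v_1,v_3\}$). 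But each $v_i$ is a sink of one of its two cliques and a source of the other, so no global sink exists, and the obstruction cycle your procedure produces is the even $4$-cycle. Worse, the genuine kernel $\{v_1,v_3\}$ consists of vertices that are sinks in only \emph{one} of their incident cliques, so the greedy ``peel off global sinks'' plan cannot produce it even on this tiny instance: the step is not just unjustified, it is a dead end. A correct reverse direction needs a different mechanism --- the known proofs interpret the transitive clique orientations as preference lists on the edges around each vertex of the underlying multigraph and run a Gale--Shapley-type stable-matching or fixed-point argument (as in Galvin's theorem and its extensions), with the odd-cycle-chord hypothesis ruling out ``odd-roommate'' instabilities rather than licensing a parity shortcut in a local greedy construction.
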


Now we use Theorem~\ref{KP-iff-line} to prove that three particular
line graphs have $f$-KP orientations, where $f(v)=d(v)$ for a few specified
vertices $v$ and $f(v)=d(v)-1$ otherwise.

\begin{lem}
\label{mu-3-KP}
The line graphs of the subgraphs shown in Figure~\ref{fig:mu-3-KP} are $f$-KP,
where $f(v)=d(v)$ for vertices corresponding to the six bold edges in (b) and $f(v)=d(v)-1$
otherwise.
\end{lem}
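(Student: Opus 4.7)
The plan is to invoke Theorem~\ref{KP-iff-line} directly for each of the three line graphs in question. For each subgraph $H_i$ displayed in Figure~\ref{fig:mu-3-KP}, I will exhibit an explicit orientation $D_i$ of $L(H_i)$ (or, when helpful, of a supergraph of $L(H_i)$ obtained by adding bidirected edges inside existing cliques) and then verify the two conditions of Theorem~\ref{KP-iff-line}: every clique of $L(H_i)$ is transitively oriented (possibly with some bidirected edges), and every directed odd cycle has a chord.

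First I would enumerate the cliques of $L(H_i)$. These come in two flavors: for each vertex $x$ of $H_i$, the set of edges incident to $x$ forms a clique $K_x$; and if $H_i$ contains a triangle, the three edges of that triangle form a triangle-clique. The out-degree budget at a vertex $e = uv \in V(L(H_i))$ is $f(e)-1$, which equals $d_{L(H_i)}(e)-2$ except at the six distinguished edges in part (b), where it equals $d_{L(H_i)}(e)-1$. Since $d_{L(H_i)}(e) = d_{H_i}(u)+d_{H_i}(v)-2$, the out-degree budget at $e$ is roughly the total degree of $e$'s endpoints in $H_i$ minus $4$ (or minus $3$ for the six distinguished edges).

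Next, for each $H_i$ separately, I would orient every clique $K_x$ transitively by choosing a linear order of the edges incident to $x$ that places the distinguished (bold) edges toward the ``out'' end (so they absorb more in-arrows and push fewer out-arrows), and for any triangle of $H_i$, I would orient its triangle-clique transitively in a way compatible with the $K_x$ orientations. When out-degree constraints fail at some vertex, I would convert the offending arc $e\to e'$ into a bidirected arc inside the clique containing it; bidirection preserves transitivity of the clique (bidirected edges are permitted by Theorem~\ref{KP-iff-line}) and merely shifts the out-degree from one vertex to the other. Finally I would check that every directed odd cycle in $D_i$ has a (possibly bidirected) chord. In $L(H_i)$ any odd cycle must either traverse three edges of a single clique, in which case transitivity already forces a chord, or else weave between two or more cliques sharing vertices; such cycles are short in these small graphs, and I would read off the required chord case-by-case.

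The main obstacle will be simultaneously balancing the out-degrees at every vertex: the edges of multiplicity $3$ (and their neighbors in $H_i$) cause several cliques of $L(H_i)$ to overlap heavily on the same vertices, so a naive transitive orientation tends to overload one edge. The leverage I have is (i) the freedom to pick the linear order on each clique, (ii) the freedom to bidirect an arc in order to ``pay'' one unit of out-degree from each endpoint instead of two from one endpoint, and (iii) the extra slack of one unit of out-degree at each of the six distinguished edges in part (b). With these three degrees of freedom, the verification reduces to exhibiting the orientations in a figure and listing, for each vertex, its out-degree; after that, Theorem~\ref{KP-iff-line} immediately yields the $f$-KP conclusion.
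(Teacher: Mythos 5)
Your plan is the same one the paper follows: invoke Theorem~\ref{KP-iff-line}, orient each maximal clique of $L(H_i)$ by a transitive linear order, permit bidirected arcs where overlapping cliques would otherwise force conflicts or overload an out-degree, and then verify the out-degree constraint $d^+(v) \le f(v)-1$ vertex by vertex. However, as a proof this has a genuine gap: you never actually exhibit the linear orders or the resulting out-degree counts. Everything substantive is phrased as what you \emph{would} do, and the entire content of the lemma is precisely that suitable orders exist --- this is not a priori obvious, since the edges of high multiplicity force several large cliques of $L(H_i)$ to share almost all their vertices, and a naive order overloads one of them. The paper's proof is, in essence, a choice of three concrete linear orders for~(a) (and four for~(c)) together with a table of out-degrees showing they all fit under the budget; without that, there is no verification that the strategy succeeds. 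Your heuristic about ``placing bold edges toward the out end'' is a reasonable guide but is not a substitute for checking the numbers.

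Two further points. First, for the odd-cycle condition you propose a case-by-case search; the paper sidesteps this cleanly by observing that $L(H_a)$ is chordal (so there are no chordless cycles of length $>3$, and transitivity rules out directed $3$-cycles) and that every chordless cycle of $L(H_c)$ is even. If you do carry out the construction you should make such a structural observation rather than enumerate cycles. Second, the paper handles~(b) by a shortcut you did not mention: $L(H_b)$ is $L(H_a)$ with the vertex $w$ deleted, and deleting $w$ raises $f$ by exactly one on the six bold edges while lowering their degrees by one, so the same orientation (restricted) works. That is optional --- you could construct an orientation for~(b) directly --- but it saves one of the three constructions.
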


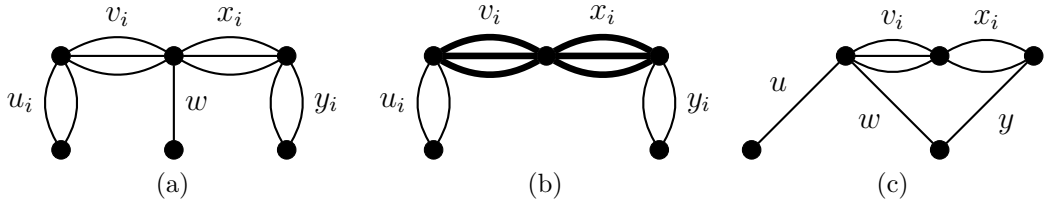
\begin{figure}[bht]
\centering
\subfloat[]{
\begin{tikzpicture}[scale = 5]
\tikzstyle{VertexStyle} = []
\tikzstyle{EdgeStyle} = []
\tikzstyle{labeledStyle}=[shape = circle, minimum size = 7pt, inner sep = 1.2pt, draw]
\tikzstyle{unlabeledStyle}=[shape = circle, minimum size = 7pt, inner sep = 1.2pt, draw, fill]
\Vertex[style = unlabeledStyle, x = 0.600, y = 0.750, L = \tiny {}]{v0}
\Vertex[style = unlabeledStyle, x = 0.900, y = 0.750, L = \tiny {}]{v1}
\Vertex[style = unlabeledStyle, x = 1.200, y = 0.750, L = \tiny {}]{v2}
\Vertex[style = unlabeledStyle, x = 0.900, y = 0.500, L = \tiny {}]{v3}
\Vertex[style = unlabeledStyle, x = 0.600, y = 0.500, L = \tiny {}]{v4}
\Vertex[style = unlabeledStyle, x = 1.200, y = 0.500, L = \tiny {}]{v5}

\Edge[labelstyle={auto=right, fill=none}](v1)(v0)
\Edge[style={bend left},  labelstyle={auto=right, fill=none}](v1)(v0)
\Edge[style={bend right}, label = {$v_i$},labelstyle={auto=right, fill=none}](v1)(v0)

\Edge[labelstyle={auto=right, fill=none}](v2)(v1)
\Edge[style={bend left},  labelstyle={auto=right, fill=none}](v2)(v1)
\Edge[style={bend right}, label = {$x_i$},labelstyle={auto=right, fill=none}](v2)(v1)

\Edge[style={bend left}, labelstyle={auto=right, fill=none}](v5)(v2)
\Edge[style={bend right}, label = {$y_i$}, labelstyle={auto=right, fill=none}](v5)(v2)

\Edge[style={bend left},  labelstyle={auto=right, fill=none}](v0)(v4)
\Edge[style={bend right}, label = {$u_i$}, labelstyle={auto=right, fill=none}](v0)(v4)

\Edge[label = {$w$}, labelstyle={auto=right, fill=none}](v3)(v1)
\end{tikzpicture}
}
\subfloat[]{
\begin{tikzpicture}[scale = 5]
\tikzstyle{VertexStyle} = []
\tikzstyle{EdgeStyle} = []
\tikzstyle{labeledStyle}=[shape = circle, minimum size = 7pt, inner sep = 1.2pt, draw]
\tikzstyle{unlabeledStyle}=[shape = circle, minimum size = 7pt, inner sep = 1.2pt, draw, fill]
\Vertex[style = unlabeledStyle, x = 0.600, y = 0.750, L = \tiny {}]{v0}
\Vertex[style = unlabeledStyle, x = 0.900, y = 0.750, L = \tiny {}]{v1}
\Vertex[style = unlabeledStyle, x = 1.200, y = 0.750, L = \tiny {}]{v2}
\Vertex[style = unlabeledStyle, x = 0.600, y = 0.500, L = \tiny {}]{v4}
\Vertex[style = unlabeledStyle, x = 1.200, y = 0.500, L = \tiny {}]{v5}

\Edge[style={color=black, line width=2.5pt}, labelstyle={auto=right, fill=none}](v1)(v0)
\Edge[style={bend left, color=black, line width=2.5pt},  labelstyle={auto=right, fill=none}](v1)(v0)
\Edge[style={bend right, color=black, line width=2.5pt}, label = {$v_i$},labelstyle={auto=right, fill=none}](v1)(v0)

\Edge[style={color=black, line width=2.5pt}, labelstyle={auto=right, fill=none}](v2)(v1)
\Edge[style={bend left, color=black, line width=2.5pt},  labelstyle={auto=right, fill=none}](v2)(v1)
\Edge[style={bend right, color=black, line width=2.5pt}, label = {$x_i$},labelstyle={auto=right, fill=none}](v2)(v1)

\Edge[style={bend left}, labelstyle={auto=right, fill=none}](v5)(v2)
\Edge[style={bend right}, label = {$y_i$}, labelstyle={auto=right, fill=none}](v5)(v2)

\Edge[style={bend left},  labelstyle={auto=right, fill=none}](v0)(v4)
\Edge[style={bend right}, label = {$u_i$}, labelstyle={auto=right, fill=none}](v0)(v4)
\end{tikzpicture}
}
\subfloat[]{
\begin{tikzpicture}[scale = 5]
\tikzstyle{VertexStyle} = []
\tikzstyle{EdgeStyle} = []
\tikzstyle{labeledStyle}=[shape = circle, minimum size = 7pt, inner sep = 1.2pt, draw]
\tikzstyle{unlabeledStyle}=[shape = circle, minimum size = 7pt, inner sep = 1.2pt, draw, fill]
\Vertex[style = unlabeledStyle, x = 0.650, y = 0.750, L = \tiny {}]{v0}
\Vertex[style = unlabeledStyle, x = 0.900, y = 0.750, L = \tiny {}]{v1}
\Vertex[style = unlabeledStyle, x = 1.150, y = 0.750, L = \tiny {}]{v2}
\Vertex[style = unlabeledStyle, x = 0.400, y = 0.500, L = \tiny {}]{v3}
\Vertex[style = unlabeledStyle, x = 0.900, y = 0.500, L = \tiny {}]{v4}
\Edge[label = {$u$}, labelstyle={auto=right, fill=none}](v0)(v3)

\Edge[labelstyle={auto=right, fill=none}](v1)(v0)
\Edge[style={bend left}, labelstyle={auto=right, fill=none}](v1)(v0)
\Edge[style={bend right}, label = {$v_i$}, labelstyle={auto=right, fill=none}](v1)(v0)

\Edge[style={bend left}, labelstyle={auto=right, fill=none}](v2)(v1)
\Edge[style={bend right}, label = {$x_i$}, labelstyle={auto=right, fill=none}](v2)(v1)

\Edge[label = {$y$}, labelstyle={auto=right, fill=none}](v4)(v2)
\Edge[label = {$w$}, labelstyle={auto=right, fill=none}](v0)(v4)
\end{tikzpicture}
}
\caption{The three cases of Lemma~\ref{mu-3-KP}.\label{fig:mu-3-KP}}
\end{figure}


\begin{proof}
In each case, let $H$ denote the graph shown and let $G$ denote its line graph.
The orientation for the second line graph comes from the orientation for the
first, simply by deleting a vertex. Since the lists sizes don't go down, the
reducibility of the first line graph implies the reducibility of the second.  
Our orientations $D$ will actually orient some edges in both directions.
This is fine, as long as $d^+_D(v)\le d_G(v)-2$.

We begin with Figure~\ref{fig:mu-3-KP}(a).  We refer to the edges of the subgraph
and the vertices of its line graph interchangably.  From left to write, label
the edges as $u_1$, $u_2$, $v_1$, $v_2$, $v_3$, $w$, $x_1$, $x_2$, $x_3$, $y_1$,
$y_2$; if vertices differ only in their subscript, then they correspond to
parallel edges.

To form $D$, take all of the directed edges implied by transitivity in the three linear
orders $v_1\to v_2\to u_1\to u_2\to v_3$; $x_1\to x_2\to y_1\to y_2\to x_3$;
$v_3\to x_3\to w\to v_1\to v_2\to x_1\to x_2$ (one order for each maximal clique in
$G$).  Theorem~\ref{KP-iff-line} immediately implies that $D$ is
kernel-perfect, since $G$ is chordal.  All that remains is to verify that the
outdegrees are small enough.  In the table below we give
the degree of each vertex in $G$ and its outdegree in $D$.

\begin{figure}[hbt]
$$
\begin{array}{c|ccccccccccc}
& u_1 & u_2 & v_1 & v_2 & v_3 & w & x_1 & x_2 & x_3 & y_1 & y_2\\
\hline
d_G   & 4 & 4 & 8 & 8 & 8 & 6 & 8 & 8 & 8 & 4 & 4\\
d^+_D & 2 & 1 & 6 & 5 & 6 & 4 & 4 & 3 & 5 & 2 & 1
\end{array}
$$
\caption{Degrees in $G$ and outdegrees in $D$, for Figure~\ref{fig:mu-3-KP}(a).}
\end{figure}

%
\noindent
This completes the proof for
Figure~\ref{fig:mu-3-KP}(a), and also for Figure~\ref{fig:mu-3-KP}(b).

Now consider Figure~\ref{fig:mu-3-KP}(c).  
From left to write, label the edges as $u, v_1, v_2, v_3, w, x_1, x_2, y$.
To form $D$, take all directed edges implied by the four linear orders $v_3,w,
u, v_1, v_2$; $v_1, v_2, x_1, x_2, v_3$; $x_1, x_2, y$; $y,w$ (one order for
each maximal clique in $G$).  Again, Theorem~\ref{KP-iff-line} immediately
implies that $D$ is kernel-perfect (now $G$ is no longer chordal, but every
chordless cycle is even, which is sufficient).  Again, we need only verify that
the outdegrees are small enough.  In the table below we give the degree of each
vertex in $G$ and its outdegree in $D$.

\begin{figure}[hbt]
$$
\begin{array}{c|cccccccc}
& u & v_1 & v_2 & v_3 & w & x_1 & x_2 & y\\
\hline
d_G   & 4 & 6 & 6 & 6 & 5 & 5 & 5 & 3\\
d^+_D & 2 & 4 & 3 & 4 & 3 & 3 & 2 & 1
\end{array}
$$
\caption{Degrees in $G$ and outdegrees in $D$, for Figure~\ref{fig:mu-3-KP}(c).}
\end{figure}
\noindent
This completes the proof of the lemma.
%
%
%
\end{proof}

If one or both pairs of parallel edges incident to leaves in
Figure~\ref{fig:mu-3-KP}(a) ended instead at distinct leaves, then the resulting line
graph would be unchanged; so it is again $f$-KP.  
In proving our next lemma, we use this observation implicitly.

\begin{lem}
\label{6DegenerateHelper}
Let $G$ be the line graph of some graph $H$.
If $\delta(H) \ge 7$ and $\mu(H) \le 3$, then 
$G$ is not BK-free.
Thus, if $G$ is BK-free and $\omega(G)<\Delta(G)$, then $H$ is 6-degenerate.
\label{6Degenerate}
\end{lem}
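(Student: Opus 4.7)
The plan is to argue by contradiction. Assume $\delta(H) \ge 7$, $\mu(H) \le 3$, and $G = L(H)$ is BK-free. Lemma~\ref{mu-at-most-3-AT} then forces no edge of multiplicity $3$ in $H$ to lie on a triangle; this ``no-triangle'' condition is the main structural lever. My aim is to exhibit, as an induced subgraph of $G$, the line graph of one of the three multigraphs in Figure~\ref{fig:mu-3-KP}. By Lemma~\ref{mu-3-KP} such an induced subgraph $X$ is $f$-KP for the stated $f$; for configurations (a) and (c) we have $f(v) = d_X(v) - 1$ uniformly, and the BK-free threshold $f_Y(v) = d_X(v) - 1 + \Delta(G) - d_G(v)$ dominates $f$ automatically, so $X$ is $f_Y$-KP and contradicts BK-freeness. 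For (b), the six bold vertices correspond to edges of multiplicity $3$, and this configuration may be applied only after verifying those vertices are low in $G$.

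The case split proceeds on the multiplicity structure at a high-degree vertex. If some vertex $v$ has two incident edges $va, vb$ of multiplicity $3$, the no-triangle condition makes $a, b$ distinct and nonadjacent, with $N_H(a) \setminus \{v\}$ and $N_H(b) \setminus \{v\}$ disjoint from $N_H(v) \setminus \{a,b\}$. Since $d_H(a), d_H(b) \ge 7$, each of $a$ and $b$ has at least four further incident edges living in disjoint neighborhoods, and a short case analysis on the multiplicities of these edges yields configuration (b); if $v$ also carries an incident edge to a vertex outside $\{a,b\}$, the configuration upgrades to (a). If no vertex has two incident edges of multiplicity $3$ but a multiplicity $3$ edge $vu$ does exist, I target configuration (c): starting from $vu$ and using $d_H(v), d_H(u) \ge 7$, I adjoin a further multi-edge incident to $v$ or $u$ and close a $4$-cycle through a common single-edge neighbor of the two outer endpoints; the existence of this common neighbor is forced by pigeon-hole on the many non-$v$, non-$u$ edges from the respective endpoints.

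The main anticipated obstacle is the residual case $\mu(H) \le 2$, in which no edge of multiplicity $3$ exists and none of the configurations in Figure~\ref{fig:mu-3-KP} applies literally. Here I plan to supply one further $f$-KP configuration tailored to multiplicity-$2$ edges and verify its reducibility via Theorem~\ref{KP-iff-line}, in direct analogy with the verification in Lemma~\ref{mu-3-KP}. The ``Thus'' conclusion then follows from the first statement: if $G$ is BK-free with $\omega(G) < \Delta(G)$, Lemma~\ref{mu-at-most-3-AT} yields $\mu(H) \le 3$; if $H$ were not $6$-degenerate, any subgraph $H' \subseteq H$ with $\delta(H') \ge 7$ also has $\mu(H') \le 3$, so the first statement produces an induced $f$-KP subgraph $X$ of $L(H')$. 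Since $L(H')$ is an induced subgraph of $G$, and since the threshold $f_Y$ for $G$ is at least as generous as the $f$ used in Lemma~\ref{mu-3-KP}, $X$ persists as an $f_Y$-KP subgraph of $G$, contradicting BK-freeness of $G$.
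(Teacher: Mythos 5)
Your outline inverts the structure of the actual argument, and this inversion creates a gap that cannot be repaired by ``one further configuration.'' You treat the three multigraphs in Figure~\ref{fig:mu-3-KP} as the main engine and defer the low-multiplicity case as a residual technicality; in the paper's proof it is exactly the opposite. The bulk of the work is a max-cut argument: take a partition $\{A,B\}$ of $V(H)$ maximizing $\lVert A,B\rVert$ (and, subject to that, minimizing $\sum_{xy\in E(A,B)}\mu(xy)^2$), so that the resulting bipartite subgraph $Q$ satisfies $d_Q(x)\ge d_H(x)/2\ge 4$ for every $x$. One then applies Theorem~\ref{BKWmain} to $L(Q)$; the arithmetic $f(v)=d_{L(Q)}(v)-1+\Delta(G)-d_G(v)\ge\max\{d_Q(x),d_Q(y)\}$ goes through whenever $d_Q(x)\ge\mu(xy)+2$ on both ends of a high edge (and $\ge\mu(xy)+1$ on both ends of a low edge). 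The configurations of Lemma~\ref{mu-3-KP} enter only to dispatch the narrow corner case $d_Q(x)=4$, $\mu(xy)=3$, $xy$ high, which forces $d_H(x)\in\{7,8\}$ and pins down the local multiplicity pattern using the extremality conditions (1) and (2) on the cut. Here the ``forbidden configuration'' is $L(Q)$ itself, a graph whose size depends on $H$; there is no fixed finite induced subgraph doing the work.

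This matters because your plan to ``supply one further $f$-KP configuration tailored to multiplicity-$2$ edges'' cannot work in principle. If $H$ is simple (multiplicity $1$) with $\delta(H)\ge 7$ and arbitrarily high girth, then any fixed finite multigraph $M$ either contains a cycle --- in which case $M\not\subseteq H$ once the girth exceeds the length of that cycle --- or $M$ is a forest, in which case $L(M)$ is a block graph (all blocks cliques), hence a Gallai tree, hence not even degree-choosable, so certainly not $f$-KP or $f$-AT for the relevant $f$. So no finite configuration can catch every such $H$; you genuinely need the cut-plus-Theorem~\ref{BKWmain} mechanism. A secondary issue is that your handling of the multiplicity-$3$ cases is also not watertight: the ``close a $4$-cycle through a common single-edge neighbor'' step forcing configuration (c) is exactly the kind of short-cycle existence that fails in high-girth $H$, and the low/high verification needed for configuration (b) (the six bold edges must be low in $G$) is asserted but not established. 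In the paper those verifications fall out of the tightly constrained corner cases $d_H(x)=8$ and $d_H(x)=7$ produced by the max-cut extremality, which is information your approach never generates.
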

\begin{proof}
We begin by proving the first statement.
Choose a partition $\{A, B\}$ of $V(H)$ to 
\begin{enumerate}
\item[(1)] maximize $||A, B||$; and subject to that to 
\item[(2)] minimize $\sum_{xy \in E(A, B)} \mu(xy)^2$
\end{enumerate}
\noindent
Here (2) is just giving preference to things like 3 single edges over one triple edge.  

Let $Q$ be the bipartite graph with parts $A$ and $B$ and edges $E(A, B)$.  Note
that $d_Q(x) \ge d_H(x) / 2$ for all $x \in V(Q)$ by (1); otherwise we could
move $x$ to the other part and increase $||A, B||$.  For each $x \in V(Q)$, let
$\mu(x)$ be the maximum multiplicity of an edge in $Q$ incident to $x$.
We apply Theorem~\ref{BKWmain} to show that the line graph of $Q$ is a
$d_1$-KP subgraph of $G$ (or else $G$ contains some subgraph from
Lemma~\ref{mu-3-KP} that is $d_1$-KP). 

The hypothesis for Theorem~\ref{BKWmain} requires that $\max\{d_Q(x),d_Q(y)\}\le
(d_Q(x)+d_Q(y)-2-(\mu(xy)-1))-1$ if edge $xy$ is high.  So it suffices to show
that $d_Q(x)\ge \mu(xy)+2$ and $d_Q(y)\ge \mu(xy)+2$.
Similarly, if $xy$ is a low edge, then we need 
$d_Q(x)\ge \mu(xy) + 1$ and $d_Q(y)\ge \mu(xy) + 1$.
Since $\mu(H)\le 3$, and $d_Q(z)\ge d_H(z)/2$ for all $z\in V(Q)$, it would
suffice to have $\delta(H) \ge 9$.  Thus, we may assume $\delta(H) \le 8$.
Since $\delta(Q) \ge 4$,  we may apply Theorem~\ref{BKWmain} unless there is a
vertex $x$ with $d_Q(x) = 4$ incident to a high edge $xy$ with $\mu(xy) = 3$.  
So suppose this is true.  We have two cases: $d_H(x)=8$ and $d_H(x)=7$.

\textbf{$d_H(x) = 8$}: By (2), $x$ must be incident to two multiplicity
3 edges and two multiplicity 1 edges; otherwise we could move $x$ to the other
part of the partition and contradict that the partition is extremal.  But now we
have a $d_1$-AT subgraph, by Lemma~\ref{mu-3-KP}.

\textbf{$d_H(x) = 7$}: We have $d_H(x) + d_H(y) - \mu(xy) - 1 = \Delta(G)$. 
Since $d_H(x)=7$, we get $d_H(y)=\Delta(G)-3$.
Pick $w \in N_H(y) - x$.  Now $\Delta(G)\ge d_H(y) + d_H(w) - \mu(yw) - 1$, 
so $\mu(yw) + 4\ge d_H(w) \ge 7$.
Thus $\mu(yw) \ge 3$.  But now $y$ is incident to two multiplicity 3 edges, so
we again have a $d_1$-AT subgraph, by Lemma~\ref{mu-3-KP}.

This second statement of the lemma follows from the first one.  Note that
$\mu(H)\le 3$, by Lemma~\ref{mu-at-most-3-AT}.  If $H$ has a subgraph $H'$ with
$\delta(H')\ge 7$, then we apply the first part of the lemma to $H'$ and
conclude that $G$ is not BK-free.  Instead every subgraph $H'$ of $H$ must
have a vertex of degree at most 6.  Thus, by definition, $H$ is 6-degenerate.
\end{proof}


\begin{lem}
Let $G$ be the line graph of some graph $H$ such that $\Delta(H) < \Delta(G)$. 
If $G$ is BK-free, then $H$ has no bipartite subgraph $B$ such that for every
edge $xy \in E(B)$ the endpoint of smaller degree has all of its incident edges
in $H$ also appearing in $B$.  In other words, $H$ cannot contain a bipartite
subgraph $B$ such that for each $xy\in E(B)$ 
either
(a) $d_B(y) \ge d_B(x) = d_H(x)$; or
(b) $d_B(x) \ge d_B(y) = d_H(y)$.
\label{BKWalternater}
\end{lem}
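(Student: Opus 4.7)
I will argue by contradiction: assume $H$ contains a bipartite subgraph $B$ as described. Since two edges of $B$ share an endpoint in $H$ exactly when they share an endpoint in $B$, the line graph of $B$, which I call $L(B)$, is an \emph{induced} subgraph of $G$. It therefore suffices to exhibit a kernel-perfect orientation of some supergraph of $L(B)$ in which every vertex $e$ has out-degree strictly less than $f(e) \DefinedAs d_{L(B)}(e) - 1 + \Delta(G) - d_G(e)$, because this directly contradicts $G$ being BK-free.

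The main tool is Theorem~\ref{BKWmain} applied to $B$: it produces a kernel-perfect orientation $D$ in which $d^+_D(e) \le \max\{d_B(x),d_B(y)\} - 1$ for every $e = xy \in E(B)$. To finish I must show this bound is at most $d_{L(B)}(e) - 2$ when $e$ is high in $G$ and at most $d_{L(B)}(e) - 1$ when $e$ is low. Fix $e = xy$ and relabel so $d_B(x) \le d_B(y)$. The lemma hypothesis then forces $d_B(x) = d_H(x)$, and since every $H$-copy of $xy$ at $x$ must lie in $B$, also $\mu_B(e) = \mu_H(e)$. Combining $\Delta(H) < \Delta(G)$ with BK-freeness (which gives $d_G(e) \ge \Delta(G) - 1$) and the identity $d_G(e) = d_H(x) + d_H(y) - \mu_H(e) - 1$ yields $d_H(x) \ge \mu_H(e) + 1$ when $e$ is low and $d_H(x) \ge \mu_H(e) + 2$ when $e$ is high. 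Substituting into $d_{L(B)}(e) = d_H(x) + d_B(y) - \mu_H(e) - 1$ shows the bound $d^+_D(e) \le d_B(y) - 1$ is exactly enough in both cases, so $L(B)$ is $f$-KP.

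The only real subtlety is multiplicity bookkeeping, since $H$ and $B$ may carry parallel edges (at most three per pair, by Lemma~\ref{mu-at-most-3-AT}). The hypothesis on $B$ is precisely what forces $\mu_B(e) = \mu_H(e)$, so the $-(\mu_H(e)+1)$ corrections in the formulas for $d_G(e)$ and $d_{L(B)}(e)$ cancel cleanly in the comparison. Beyond this, the argument is just a direct application of Theorem~\ref{BKWmain}; I do not expect further obstacles.
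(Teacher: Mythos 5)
Your proof is correct and follows the same strategy as the paper: apply Theorem~\ref{BKWmain} to the line graph of $B$, verify that the resulting kernel-perfect orientation has small enough outdegrees, and thereby contradict BK-freeness. The paper condenses your high/low case split into a single computation, namely $f(v)=d_B(y)-1+\Delta(G)-d_H(y)\ge d_B(y)$ since $d_H(y)\le\Delta(H)<\Delta(G)$, but the underlying ideas (including the observation $\mu_B(xy)=\mu_H(xy)$ forced by the hypothesis, and the fact that $L(B)$ is an induced subgraph of $G$) are identical.
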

\begin{proof}
Suppose, to the contrary, that we have graphs $G$, $H$, and $B$ as in the
lemma.  Let $D$ denote the line graph of $B$. We use
Theorem~\ref{BKWmain} to show that $D$ is 
$f$-AT, where $f(v):=d_D(v)-1+\Delta(G)-d_G(v)$ for all $v\in V(D)$.
This contradicts the fact that $G$ is BK-free.

Consider a vertex $v$ of $D$ and let $xy$ denote the corresponding edge in $B$.
To apply Theorem~\ref{BKWmain}, we must show that $f(v)$ is sufficiently
large; namely, we must show that $d_D(v)-1+\Delta(G)-d_G(v)\ge
\max\{d_B(x),d_B(y)\}$.  By symmetry, assume that $d_B(y) \ge d_B(x) =
d_H(x)$.  Recall that $d_D(v)=d_B(x)+d_B(y)-\mu(xy)-1$.
Now $f(v)=d_D(v)-1+\Delta(G)-d_G(v) = (d_B(x)+d_B(y)-\mu(xy)-1)-1 +
\Delta(G)-(d_H(x)+d_H(y)-\mu(xy)-1) =d_B(y)-1+\Delta(G)-d_H(y)\ge d_B(y)$,
since $\Delta(H)<\Delta(G)$.
\end{proof}

To simplify our presentation of the key lemma from \cite{BKW} and
\cite{Woodall}, we state it only for the specific case needed for our
application: $\mad(G)<12$.  However, the proof extends easily to the more
general case that $\mad(H)<C$, for some constant $C$.

\begin{lem}
\label{helperLineGraphs}
Suppose that $G$ is BK-free, $G$ is the line graph of a graph $H$, and $\mad(H)<12$. 
If $\omega(G)<\Delta(G)$, then $\Delta(G)\le 68$.  
\end{lem}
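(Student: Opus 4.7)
The plan is to suppose $\Delta(G) \ge 69$ for contradiction and exhibit a bipartite subgraph $B \subseteq H$ satisfying the hypotheses of Lemma~\ref{BKWalternater}, which would contradict the BK-freeness of $G$. Writing $\Delta \DefinedAs \Delta(G)$, the hypothesis $\omega(G) < \Delta$ together with $\omega(G) \ge \Delta(H)$ gives $\Delta(H) \le \Delta - 1$ (so the auxiliary hypothesis of Lemma~\ref{BKWalternater} is in force). By Lemma~\ref{mu-at-most-3-AT}, $\mu(H) \le 3$, so every edge $xy \in E(H)$ satisfies $d_H(x) + d_H(y) = d_G(xy) + \mu(xy) + 1 \le \Delta + 4$.

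First I would set a threshold $T = \lceil (\Delta+5)/2 \rceil \ge 37$ and call a vertex $v$ \emph{big} if $d_H(v) \ge T$ and \emph{small} otherwise; denote the big and small sets by $V_2$ and $V_1$. The edge-degree bound $2T > \Delta + 4$ forces $V_2$ to be independent in $H$. The natural first candidate is to take $B$ to be all edges of $H$ incident to $V_2$: this $B$ is bipartite with $V_2$ on one side, each big vertex $y$ is saturated in $B$ (so $d_B(y) = d_H(y) \ge T$), and each small neighbor $x$ of a big vertex satisfies $d_B(x) \le d_H(x) < T \le d_B(y)$. Hence for each edge $xy \in E(B)$ with $x$ small and $y$ big, only condition~(a) of Lemma~\ref{BKWalternater} is applicable, and it reduces to requiring that $x$ has no small $H$-neighbor.

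The main difficulty is to handle \emph{mixed} small vertices, namely those with both big and small $H$-neighbors. The plan is to absorb such vertices into the ``big side'' by enlarging $V_2$ to a carefully-chosen independent set $V_2^\star \supseteq V_2$ and using $B^\star = E(V_2^\star,\, V(H) \setminus V_2^\star)$. For edges of $B^\star$ whose $V_2^\star$-endpoint is originally in $V_2$, condition~(a) carries over as before; for edges whose $V_2^\star$-endpoint is a newly-promoted small vertex $y$, we have $d_H(y) \le T-1$, so condition~(b) — $d_{B^\star}(x) \ge d_H(y)$ — has room to hold. The $\mad(H) < 12$ hypothesis, which yields $|E(H)| < 6\,|V(H)|$ and guarantees a vertex of degree at most $11$ in every subgraph, should drive the verification that this augmentation can be done consistently.

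The hard part will be to manage the promotion of mixed small vertices into $V_2^\star$ systematically: to avoid creating edges inside $V_2^\star$, and simultaneously to ensure that the resulting $B^\star$ satisfies condition~(a) or~(b) on every edge. I expect this verification to be a discharging or double-counting step that exploits $\mad(H) < 12$ to rule out any obstructing configuration (which would force more than $6\,|V(H)|$ edges and contradict the density bound). The specific bound $\Delta \le 68$ should emerge from balancing the threshold $T \ge 37$ against the list-size requirements imposed by Lemma~\ref{BKWalternater} and the degree-sum ceiling $d_H(x)+d_H(y) \le \Delta + 4$.
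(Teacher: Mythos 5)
Your plan rests on exhibiting a single bipartite subgraph $B^\star$ of $H$ that satisfies the hypotheses of Lemma~\ref{BKWalternater}. The initial observation about the threshold $T$ is sound: since $d_H(u)+d_H(v)=d_G(uv)+\mu(uv)+1\le \Delta+4$ for every edge $uv$, the big vertices $V_2$ do form an independent set, and for a small vertex $x$ with \emph{only} big neighbors, condition~(a) holds on every edge of $B=E(V_2,V(H)\setminus V_2)$ incident to $x$. You correctly isolate the obstruction: a mixed small vertex $x$ (with a small neighbor) is not saturated in $B$, so (a) fails, and since $d_B(x)<T\le d_B(y)$ for each big neighbor $y$, (b) fails too.

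But the proposed repair is internally inconsistent. You want to promote mixed small vertices into an independent set $V_2^\star\supseteq V_2$. A mixed small vertex has, by your own definition, at least one big neighbor, and that big neighbor is in $V_2\subseteq V_2^\star$; so the promoted vertex would be adjacent to a vertex already in $V_2^\star$, destroying independence. To preserve independence you would have to \emph{remove} big vertices from $V_2^\star$, contradicting $V_2^\star\supseteq V_2$ and reopening the original problem on those edges. There is no local fix here: every mixed small vertex is blocked from promotion by exactly the vertices you are required to keep. The remainder of your outline (that $\mad(H)<12$ ``should drive the verification'') is not an argument, and the concluding assertion that the bound $68$ will ``emerge'' from balancing $T$ against $\Delta+4$ does not match the arithmetic --- there is no balance point in sight at $\Delta=69$, since $2T\approx\Delta+5$ already sits just above $\Delta+4$ for every $\Delta\ge 9$.

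The paper's proof takes a genuinely different route, and the difference is instructive: it never tries to produce a single violating subgraph. Instead it uses Lemma~\ref{BKWalternater} in the \emph{contrapositive} direction, iteratively, inside a discharging argument. Start every vertex of $H$ with charge $d_H(v)$, then run rounds $i=2,\ldots,11$. In round $i$, let $V_i$ be the vertices of $H$-degree at most $i$ (these are independent because $d_H(u)+d_H(v)\ge\Delta(H)+2$ for edges) and let $B_i$ be the bipartite graph of all edges incident to $V_i$. Every $u\in V_i$ is saturated in $B_i$, so if every $y\in V(B_i)\setminus V_i$ had $d_{B_i}(y)\ge\min_{u\in V_i}d_{B_i}(u)$ with the appropriate saturation, $B_i$ would satisfy (a) on all its edges and violate the lemma. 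Hence some $v\notin V_i$ has $d_{B_i}(v)<i$; that $v$ sends charge $1$ to each of its $B_i$-neighbors, costing at most $i-1$; delete $v$ and its $B_i$-neighbors and repeat on what remains of $B_i$. After all rounds, every vertex of degree $\le 11$ has charge exactly $12$; a vertex loses charge on round $i$ only if $d_H(v)\ge\Delta(H)+2-i$, so one bounds the total loss by an arithmetic sum and finds every vertex retains charge $\ge 12$ whenever $\Delta(G)\ge 69$, contradicting $\mad(H)<12$. The $\mad$ bound therefore plays the role of a global accounting constraint, not a tool for constructing $B^\star$, and the bound $68$ comes out of the explicit sum $(i-1)+i+\cdots+10=55-(1+\cdots+(\delta(H)-2))$ together with $\Delta(G)-1\le\delta(H)+\Delta(H)-2$. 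You need this iterated/local use of Lemma~\ref{BKWalternater}; the single-subgraph construction does not go through.
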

\begin{proof}

Suppose the lemma is false, and let $G$ be a counterexample.
We use the discharging method to get a contradiction;  since we know
$\mad(H)<12$, we use discharging on the vertices of $H$.
Give each vertex $v\in V(H)$ initial charge $\ch(v)=d_H(v)$.  
We have 10 successive rounds of discharging, rounds 2 through 11.  On round
$i$, each vertex of degree at most $i$ receives charge 1 from some high degree
neighbor.  Thus, for each $v$ with $d(v)\le 11$, vertex $v$ receives charge 1
on a total of $12-d(v)$ rounds.  Hence, each such vertex receives total
charge $12-d(v)$, and ends with final charge $d(v)+(12-d(v))=12$.  We also must
verify that no vertex gives away too much charge.  Thus, each vertex finishes
with charge at least 12, which contradicts our assumption that $\mad(H)<12$.
The details forthwith.

For each round of discharging, we use Lemma~\ref{BKWalternater} repeatedly.  
For each $i$ with $2 \le i \le 11$, let $V_i$ be the vertices of $H$ of degree
at most $i$.  Let $B_i$ be the bipartite subgraph of $H$ containing $V_i$ and
all edges incident to $V_i$.
Since $G$ is BK-free, $\delta(G)\ge \Delta(G)-1$.  Thus, each edge
$uv$ in $H$ has $d(u)+d(v)-2\ge \Delta(G)-1$, so $d(u)+d(v)\ge \Delta(G)+1$. 
Since $\Delta(H)\le \omega(G)<\Delta(G)$, we have $d(u)+d(v)\ge \Delta(H)+2$ for
each edge $uv$.  In particular, $\delta(H)\ge 2$.  This also implies that for
each $i$ with $2\le i\le 11$ the set $V_i$ is independent in $H$.  

For every $u \in V_i$, we have $d_{B_i}(u) = d_H(u)$, so
Lemma~\ref{BKWalternater} shows that there must exist $v \in V(B_i)
\setminus V_i$ with $d_{B_i}(v) < d_{B_i}(u) \le i$.  Thus, on round $i$, we
give charge 1 from $v$ to each of its neighbors in $B_i$; afterwards, 
we delete from $B_i$ vertex $v$ and all of its neighbors in $B_i$.
Now again applying Lemma~\ref{BKWalternater} gives another vertex $v'$ with
$d_{B_i - v - N(y)}(v') < i$. We can repeat this process until
$B_i \cap V_i$ is empty, at which time each $v \in V_i$ has received charge 1.
On round $i$, each $v \in V(B_i) \setminus V_i$ has lost charge at most $i-1$,
since it gave charge 1 to at most $i-1$ neighbors.

Recall that $d(u)+d(v)\ge \Delta(G)+1 \ge \Delta(H)+2$ for each edge $uv\in E(H)$.
Thus, on round 2, only $\Delta(H)$-vertices give
charge (and only $2$-vertices receive it). Analogously, on an arbitrary
round $i$, only vertices of degree at least $\Delta(H) + 2 - i$ give
charge.  So if a vertex gives charge only on rounds $i$ through 11, then it
gives away charge at most $(i-1)+i+\cdots+10$.  Since charge is first given on
round $\delta(H)$,
in general each vertex loses at most $55 - (1 + 2 + \cdots + (\delta(H) - 2))$.  
This maximum amount of charge can only be lost by a vertex of degree at least
$\Delta(H)-\delta(H)+2$.  So, if some vertex of $H$ finishes the discharging
rounds with insufficient charge, then
$(\Delta(H)-\delta(H)+2)-(55 - (1 + 2 + \cdots + (\delta(H) - 2)))\le 11$.
This simplifies to $\Delta(H)+\frac{(\delta(H)-2)(\delta(H)-3)}2\le 66$.  Thus,
if $\delta(H)\ge 5$, we have $\Delta(H)+\delta(H)-2\le 66$; if $\delta(H)=4$, then
$\Delta(H)\le 65$. Finally, if $2\le \delta(H)\le
3$, then we still have $\Delta(H)\le 66$.

Now we are almost done.  However, we must still translate our upper bound on $\Delta(H)$
into an upper bound on $\Delta(G)$.
Let $u$ be a minimum degree vertex in $H$ and $v$ a neighbor of $u$.  Then, in $G$
we have $d_G(uv) = d_H(u) + d_H(v) - 1 - \mu(uv) \le \delta(H) + \Delta(H) - 2$.
Since $G$ is BK-free, every vertex has degree at least $\Delta(G) - 1$.  So 
$\Delta(G) - 1 \le \delta(H) + \Delta(H) - 2$.  Now we apply the bounds from the
previous paragraph.
If $\delta(H)\ge 5$, then
$\Delta(G)\le \delta(H)+\Delta(H)-1\le 67$.  If, instead, $\delta(H)=4$, then
$\Delta(G)\le 65+4-1=68$.  Finally, if $\delta(H)\le 3$, then $\Delta(G)\le
66+3-1=68$.
\end{proof}

Now we combine Lemmas~\ref{mu-at-most-3-AT}--\ref{helperLineGraphs} to prove
Theorem~\ref{mainLineGraphs}.  For convenience, we restate it.

\begin{LineThm}
If $G$ is a BK-free line graph with $\omega(G)<\Delta(G)$, then $\Delta(G)<69$.
Thus, if $G$ is a line graph with $\Delta(G)\ge 69$, then
$\chip(G)\le \max\{\omega(G),\Delta(G)-1\}$.  
\end{LineThm}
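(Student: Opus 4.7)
The overall plan is to chain together Lemmas~\ref{mu-at-most-3-AT}, \ref{6DegenerateHelper}, and~\ref{helperLineGraphs}. Let $G = L(H)$ be a BK-free line graph with $\omega(G) < \Delta(G)$. First, Lemma~\ref{mu-at-most-3-AT} yields $\mu(H) \le 3$; this is precisely the hypothesis needed by Lemma~\ref{6DegenerateHelper}, which then gives that $H$ is $6$-degenerate. A routine counting argument shows that a $6$-degenerate graph on $n \ge 7$ vertices has at most $6n-21$ edges (the first six deleted vertices in a degeneracy ordering contribute at most $0+1+\cdots+5 = 15$ edges and every later vertex contributes at most $6$), so every subgraph has average degree strictly less than $12$; hence $\mad(H) < 12$. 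Since the edges of $H$ sharing a common endpoint form a clique in $G$, we have $\Delta(H) \le \omega(G) < \Delta(G)$, which verifies the remaining hypothesis of Lemma~\ref{helperLineGraphs}. Applying it gives $\Delta(G) \le 68 < 69$, as required.

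For the second statement, I would argue by minimality. Suppose for contradiction that $G$ is a vertex-minimal line graph with $\Delta(G) \ge 69$ and $\chip(G) > \max\{\omega(G), \Delta(G)-1\}$. First I dispose of the case $\omega(G) \ge \Delta(G)$: since $\omega(G) \le \Delta(G)+1$ always, either $\omega(G)=\Delta(G)$, in which case the paintability analogue of Brooks' theorem gives $\chip(G) \le \Delta(G) = \omega(G)$; or $\omega(G) = \Delta(G)+1$, in which case any $(\Delta(G)+1)$-clique forces each of its vertices to use its entire degree inside the clique, so that clique is a connected component of $G$ and can be painted on its own with $\omega(G)$ colors. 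Both sub-cases contradict the choice of $G$, so $\omega(G) < \Delta(G)$. Minimality together with the definition of BK-free forces $G$ to be BK-free: any induced $f_H$-AT or $f_H$-KP subgraph $H$ would let Painter combine an optimal strategy on $G \setminus H$ (available by minimality) with the orientation's strategy on $H$. Now the first statement gives $\Delta(G) \le 68$, contradicting $\Delta(G) \ge 69$.

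The only nontrivial bookkeeping issue is tracking whether ``$6$-degenerate'' really gives the \emph{strict} inequality $\mad(H) < 12$ needed by Lemma~\ref{helperLineGraphs} (it does, by the $6n-21$ bound above), and verifying that the fringe cases $\omega(G) \ge \Delta(G)$ in the second statement reduce to standard facts and do not require the new machinery. Both checks are routine, so the theorem follows essentially immediately from the preceding lemmas once they are stitched together.
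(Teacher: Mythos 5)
Your proof is correct and follows essentially the same route as the paper: chain Lemma~\ref{6DegenerateHelper} (which already incorporates $\mu(H)\le 3$ from Lemma~\ref{mu-at-most-3-AT}) to get $6$-degeneracy, deduce $\mad(H)<12$, apply Lemma~\ref{helperLineGraphs}, and then use a minimal-counterexample argument plus the paintability analogue of Brooks' Theorem for the second statement. The only difference is cosmetic: you spell out the $6n-21$ edge bound for $\mad(H)<12$ and the $\omega(G)=\Delta(G)+1$ sub-case, both of which the paper treats as immediate.
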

\begin{proof}
Let $G$ be a BK-free graph such that $G$ is the line graph of some graph $H$.
First, suppose that $\omega(G)<\Delta(G)$.  Now $H$ is 6-degenerate, by
Lemma~\ref{6DegenerateHelper}, so $\mad(H)<12$;  
thus, the first statement follows from Lemma~\ref{helperLineGraphs}.
Now consider the second statement.
If $\omega(G)\ge\Delta(G)$, then the result holds by Brooks' Theorem
(more precisely, its generalization to Alon--Tarsi orientations, proved
in~\cite{Brooks-AT}).  So assume that $G$ is a minimal counterexample; now 
$\omega(G)<\Delta(G)$ and $\Delta(G)\ge 69$. 
The minimality of $G$ implies that $G$ is BK-free.  Now the first statement
implies that $\Delta(G)<69$, which is a contradiction.
\end{proof}

\begin{cor}
If $G$ is a quasi-line graph with $\Delta(G)\ge 69$, then $\chip(G)\le
\max\{\omega(G),\Delta(G)-1\}$.  Further, if $G$ is a claw-free
graph with $\Delta(G)\ge 69$, then $\chil(G)\le \max\{\omega(G),\Delta(G)-1\}$.  
\end{cor}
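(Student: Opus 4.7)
The plan is to piece the corollary together from results already established, with a short minimal-counterexample argument. For the quasi-line statement, I would first dispose of the easy case $\omega(G) \ge \Delta(G)$: here the Alon--Tarsi/paintability strengthening of Brooks' Theorem proved in \cite{Brooks-AT} gives $\chip(G) \le \omega(G) = \max\{\omega(G),\Delta(G)-1\}$. So I may assume $\omega(G) < \Delta(G)$ and suppose, for contradiction, that $G$ is a vertex-minimal quasi-line counterexample with $\Delta(G) \ge 69$.

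By the motivating remark for the definition of BK-free (using the same extension to online list-coloring, where Painter plays optimally on $G \setminus H$ and then on $H$), minimality forces $G$ to be BK-free. Now I would apply Lemma~\ref{QuasiLineContainedInLine} to obtain a line graph $G'$ with $V(G')=V(G)$, $G \subseteq G'$, $\Delta(G')=\Delta(G) \ge 69$, and $\omega(G') < \Delta(G')$. Theorem~\ref{mainLineGraphs} then gives $\chip(G') \le \max\{\omega(G'),\Delta(G')-1\} = \Delta(G)-1$. Since any valid Painter strategy for $G'$ is also valid for $G$ (every proper coloring of $G'$ is proper for $G$), we get $\chip(G) \le \chip(G') \le \Delta(G)-1 \le \max\{\omega(G),\Delta(G)-1\}$, contradicting the choice of $G$. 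This proves the quasi-line statement.

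For the claw-free statement, I would invoke the reduction of Theorem~5.6 from~\cite{BK-claw-free}: it shows that if the list-coloring Borodin--Kostochka bound holds for all quasi-line graphs $G$ with $\omega(G) < \Delta(G)$ and $\Delta(G) \ge 9$, then the same bound holds for all claw-free graphs with those parameters. Combining this reduction with the quasi-line case just proved gives $\chil(G) \le \max\{\omega(G),\Delta(G)-1\}$ whenever $G$ is claw-free with $\Delta(G) \ge 69$ (again handling $\omega(G) \ge \Delta(G)$ separately via Brooks' Theorem).

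The main obstacle is not in this corollary itself --- it is essentially a bookkeeping argument --- but rather lies in the machinery it relies on: the structure theorem reductions in Lemmas~\ref{NotCircularIntervalIfBKCritical}, \ref{NoNonLinear}, \ref{Irreducible2Join}, and \ref{QuasiLineContainedInLine}, and the discharging proof of Theorem~\ref{mainLineGraphs} for line graphs, which is what forces the numerical bound $\Delta(G) \ge 69$. One small point to verify carefully is that $\chip$ is monotone under subgraphs on the same vertex set, which is immediate since any Painter strategy winning with list sizes $f$ on a supergraph wins on the subgraph.
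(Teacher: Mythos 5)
Your proof is correct and follows essentially the same route the paper takes, just with more of the bookkeeping made explicit. The paper's own proof of this corollary is a two-sentence pointer: it observes that the first claim follows from Lemma~\ref{QuasiLineContainedInLine} (having already remarked, just after that lemma, that a quasi-line counterexample $G$ would give a line-graph counterexample $G'$ since $\chip(G')\ge\chip(G)$), and that the second claim follows from the claw-free-to-quasi-line reduction in \cite[Theorem~5.6]{BK-claw-free}. You unpack exactly that: Brooks' paintability strengthening handles $\omega(G)\ge\Delta(G)$; minimality forces a counterexample to be BK-free; Lemma~\ref{QuasiLineContainedInLine} produces the line graph $G'$ on the same vertex set with $\Delta(G')=\Delta(G)$ and $\omega(G')<\Delta(G')$; Theorem~\ref{mainLineGraphs} gives $\chip(G')\le\Delta(G)-1$; and subgraph-monotonicity of $\chip$ transfers this to $G$. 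The only small caution, which the paper itself also leaves implicit, is in the ``minimality forces BK-free'' step: when coloring $G\setminus H$, one must argue that $\Delta(G)-1$ colors still suffice even if $\Delta(G\setminus H)<\Delta(G)$, which requires a quick split into cases (greedy when $\Delta(G\setminus H)\le\Delta(G)-2$, Brooks when $\Delta(G\setminus H)=\Delta(G)-1$, minimality when $\Delta(G\setminus H)=\Delta(G)$). This does not affect the correctness of your argument, but is worth flagging if you want the minimal-counterexample step to be fully rigorous.
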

\begin{proof}
The first statement follows from Lemma~\ref{QuasiLineContainedInLine}.
The second statement follows from a similar reduction from claw-free graphs to
quasi-line graphs for the list-coloring version of the Borodin-Kostochka
conjecture, which we proved in~\cite[Theorem 5.6]{BK-claw-free}.
\end{proof}




\bibliographystyle{amsplain}
\bibliography{GraphColoring}
\end{document}